\documentclass[11pt]{article}

\usepackage{fullpage}
\usepackage[utf8]{inputenc}
\usepackage[T1]{fontenc}  
\usepackage{todonotes}
\usepackage{amsmath}
\usepackage{amsfonts,amssymb}
\usepackage[margin=0.72in]{geometry}
\usepackage{url,hyperref}
\usepackage{amsthm}
\usepackage{xcolor}
\usepackage{float}
\usepackage{booktabs} 
\usepackage{graphicx}
\usepackage{caption}
\usepackage{subcaption}
\usepackage{ulem}
\usepackage{bm}
\usepackage{algorithm,algorithmic}
\allowdisplaybreaks

\linespread{1.02}

% some traditional defintions that can be blamed on craig barratt
\newcommand{\BEAS}{\begin{eqnarray*}}
\newcommand{\EEAS}{\end{eqnarray*}}
\newcommand{\BEQ}{\begin{equation}}
\newcommand{\EEQ}{\end{equation}}
\newcommand{\BIT}{\begin{itemize}}
\newcommand{\EIT}{\end{itemize}}

% text abbrevs
\newcommand{\eg}{{\it e.g.}}
\newcommand{\ie}{{\it i.e.}}
\newcommand{\cf}{{\it cf. }}

% std math stuff

% probability stuff

% convexity & optimization stuff

\def\<#1,#2>{\langle #1,#2\rangle}

\newtheorem{theorem}{Theorem}
\newtheorem{remark}{Remark}
\newtheorem{assumption}{Assumption}
\newtheorem{lemma}[theorem]{Lemma}
\newtheorem{corollary}[theorem]{Corollary}
\newtheorem{proposition}[theorem]{Proposition}
\theoremstyle{definition}
\newtheorem{definition}{Definition}[section]
%\newtheorem{assumption}[theorem]{Assumption}

% alg description --- not much for now!
%
{\begin{quote}}{\end{quote}}

% figures

% captions a la sirev
\makeatletter
\long\def\@makecaption#1#2{
   \vskip 9pt
   \begin{small}
   \setbox\@tempboxa\hbox{{\bf #1:} #2}
   \ifdim \wd\@tempboxa > 5.5in
        \begin{center}
        \begin{minipage}[t]{5.5in}
        \addtolength{\baselineskip}{-0.95pt}
        {\bf #1:} #2 \par
        \addtolength{\baselineskip}{0.95pt}
        \end{minipage}
        \end{center}
   \else
	\hbox to\hsize{\hfil\box\@tempboxa\hfil}
   \fi
   \end{small}\par
}
\makeatother

% some commands & environments for making lecture notes
\newcounter{oursection}

\newcounter{lecture}

\newcommand{\norm}[1]{{\left\vert\kern-0.25ex\left\vert\kern-0.25ex\left\vert #1
    \right\vert\kern-0.25ex\right\vert\kern-0.25ex\right\vert}}

\author{
% Xin Guo 
% \thanks{University of California, Berkeley. 
% %  David S.~Hippocampus\thanks{Use footnote for providing further information
% %    about author (webpage, alternative address)---\emph{not} for acknowledging
% %    funding agencies.} \\
% %  Department of Computer Science\\
% %  Cranberry-Lemon University\\
% %  Pittsburgh, PA 15213 \\
% \textbf{Email:}  \texttt{xinguo@berkeley.edu}} 
%   \thanks{Amazon.com. \textbf{Email:} \texttt{xnguo@amazon.com}}
%   \footnotemark[5]
%     \and
Anran Hu 
\thanks{Department of IEOR, Columbia University. %, 116th and Broadway, New York, NY 10027.
\textbf{Email:} \texttt{ah4277@columbia.edu}}
\thanks{This work was carried out primarily while the author was at the Mathematical Institute, University of Oxford, supported by a Hooke Fellowship.}
\and
Junzi Zhang
\thanks{Citadel Securities. %, 131 S Dearborn St, Chicago, IL, USA, 60603. 
\textbf{Email:} \texttt{junzizmath@gmail.com}} 
  %\thanks{Work done prior to joining or outside of Citadel Securities.}
}
%\title{MF-OMI: Globally Convergent Algorithms for Monotone Mean-Field Games via Occupation-Measure Inclusion} 
%\title{Learning Symmetric Nash Equilibria via Mean-Field Frank-Wolfe Splitting}
\title{MF-OML: Online Mean-Field Reinforcement Learning with Occupation Measures for Large Population Games}%Markov $N$-Player Games}
\date{}
%\date{\today}

\providecommand{\keywords}[1]
{
  \small	
  \textbf{Key words.} #1
}

\begin{document}

\maketitle

\begin{abstract}
Reinforcement learning for multi-agent games has attracted lots of attention recently. However, given the challenge of solving Nash equilibria, existing works with guaranteed polynomial complexities either focus on variants of zero-sum and potential games, or aim at solving (coarse) correlated equilibria, or require access to simulators, or rely on certain assumptions that are hard to verify. This work proposes \texttt{MF-OML} (Mean-Field Occupation-Measure Learning), an online mean-field reinforcement learning  algorithm for computing approximate Nash equilibria of large population sequential symmetric games.  \texttt{MF-OML} is the first fully polynomial multi-agent reinforcement learning algorithm for provably solving Nash equilibria (up to mean-field approximation gaps that vanish as the number of players $N$ goes to infinity) beyond variants of zero-sum and potential games.  When evaluated by the cumulative deviation from Nash equilibria, the algorithm is shown to achieve a high probability regret bound of $\tilde{O}(M^{3/4}+N^{-1/2}M)$ for games with the strong Lasry-Lions monotonicity condition, and a regret bound of $\tilde{O}(M^{11/12}+N^{-1/6}M)$ for games with only the Lasry-Lions monotonicity condition, where $M$ is the total number of episodes and $N$ is the number of agents of the game. As a by-product, we also obtain the first tractable globally convergent computational algorithm for computing approximate Nash equilibria of monotone mean-field games.
\end{abstract}

{\keywords{mean-field games, symmetric $N$-player games, Nash equilibrium, occupation measure, multi-agent reinforcement learning, operator splitting}}

%\tableofcontents

\section{Introduction}
%\paragraph{Multi-agent games.}
In the domain of game theory, multi-agent systems present both profound opportunities and formidable challenges. These systems, where multiple agents interact under a set of strategic decision-making rules, are pivotal in fields ranging from economics \cite{liu2022welfare} to autonomous vehicle navigation \cite{zhang2020multi}. The inherent complexity and dynamic nature of these interactions pose significant challenges, primarily due to the scale of agent populations and the complexity of each agent's strategy space \cite{yang2020overview,zhang2021multi}.

Introduced by the seminal work of  \cite{huang2006large} and  \cite{lasry2007mean}, mean-field games (MFGs) provide an ingenious way of finding the approximate Nash equilibrium solution to the otherwise notoriously hard 
 $N$-player stochastic games. It
offers a powerful framework to tackle the complexities of multi-agent systems. MFGs simplify the analysis of large populations by considering the limit where the number of agents approaches infinity, allowing for the modeling of each agent's interaction with the average effect of the rest of the population instead of individual agents. This limiting regime has been shown to provide both analytical and computational tools for finding approximate Nash equilibria for symmetric $N$-player games, especially when the number of agents $N$ is large. It has been established that the Nash equilibria of MFGs can be used to construct $O(1/\sqrt{N})$-Nash equilibria for the corresponding $N$-player game \cite{huang2006large,lasry2007mean,delarue2019master,saldi2018markov}.
This approach has been extensively explored in the literature, with applications noted in areas such as financial markets, crowd dynamics, and large-scale social networks. %Recent studies have delved into numerical methods and theoretical extensions of MFGs, highlighting their adaptability and relevance in addressing real-world problems involving large numbers of interacting agents.

In recent years, there is a surge of interest in introducing reinforcement learning (RL) to solving MFGs, which then serves as a tool for multi-agent reinforcement learning (MARL) \cite{yang18,subramanian2019reinforcement,guo2019learning}.   %\paragraph{Reinforcement Learning.}

\paragraph{Reinforcement Learning.} 
Reinforcement Learning is a segment of machine learning where an agent learns to make decisions by performing actions and receiving feedback in the form of rewards or penalties. This learning paradigm enables agents to learn optimal policies through trial-and-error interactions with a dynamic environment. RL has been successfully applied to various problems, including complex games like Go and practical applications such as robotics and sequential decision-making tasks. RL can be categorized into three settings: simulator setting, online setting and offline setting. Offline RL involves learning from a fixed dataset without the ability to gather new data \cite{kumar2020conservative,yu2020mopo,nachum2019algaedice}. In the simulator setting, one is allowed to have access to some simulator that can provide samples of next state and reward given any current state, any current action at any time \cite{gheshlaghi2013minimax,sidford2023variance}. Online RL, on the other hand, does not have such access. One needs to interact with the environment in real-time to collect samples by adopting certain strategies \cite{auer2008near,azar2017minimax,jin2018q,agarwal2021theory}. Online RL emphasizes the need for algorithms that can efficiently balance exploration (\ie, trying new actions to discover potentially better strategies) and exploitation (\ie, leveraging known strategies to maximize rewards), a balance that is critical in dynamic and uncertain environments. Compared to the simulator setting, online RL usually faces larger challenges in the design and analysis of the algorithms due to the fact that one can not modify the underlying system/environment and observe samples that are beneficial to the learning procedure. However, in many applications, it is hard to build reliable simulators, where designing online RL algorithms becomes crucial.

%However, existing works on such mean-field reinforcement learning

\paragraph{MARL and mean-field RL.}
The integration of RL with multi-agent settings has garnered significant interest in the recent decade.  However, given the difficulties of finding Nash equilibrium in the general multi-agent systems, extending algorithms and analyses to the RL setting imposes further challenges. The theoretical works on MARL either focus on variants of zero-sum games \cite{bai2020near,liu2021sharp,kalogiannis2024zero,anagnostides2024optimistic} and potential games \cite{leonardos2021global,ding2022independent,guo2023markov}, or aim at solving (coarse) correlated equilibria \cite{jin2021v,song2021can,muller2021learning,muller2022learning} or subjective equilibria \cite{yongacoglu2022independent}. For general multi-agent systems, RL algorithms for finding Nash equilibrium usually suffer from exponential computational complexities \cite{hu2003nash,lanctot2017unified,cui2022provably}. Another stream of works on MARL deals with RL in MFGs to potentially tackle the curse of many agents in the traditional MARL literature. Most of these works propose and analyze RL algorithms that require access to mean-field simulators, {\color{black}under either contractivity assumptions \cite{guo2019learning,anahtarci2019fitted,cui2021learning,xie2021learning,guo2023general,fu2019actor,zaman2023oracle}, monotonicity assumptions \cite{perrin2020fictitious,perolat21,lee2021reinforcement,fabian2023learning,zhang2023learning}, or access to a finite model class which contains the true model \cite{huang2024statistical,huang2024model}. 
For the online setting (with $N$-player environments), \cite{yang18,subramanian2020multi,subramanian2020partially} propose Nash Q-learning based mean-field RL algorithms that require some stringent oracle assumptions inherited from \cite{hu2003nash}. More recently,   
\cite{yardim2022policy} studies independent learning in regularized contractive $N$-player mean-field games but suffers from an inherent gap in addition to the mean-field approximation error due to the heavy regularization needed for the contractive convergence analysis framework; \cite{yardim2023stateless} removes such inherent gaps by focusing on monotone $N$-player mean-field games but is restricted to stage games. 
% Last but not least, \cite{yongacoglu2022independent} studies 
% %certain independent learning algorithms for 
% relatively general $N$-player mean-field games but only shows convergence to a newly defined subjective equilibria solution concept.
} 

\paragraph{Our work and contributions.}
In this work, we aim to solve online reinforcement learning for a class of large population sequential symmetric games that adhere to the classical Lasry-Lions monotonicity condition. In Section \ref{sec:mf-omi}, we introduce the \texttt{MF-OMI-FBS} (Mean-Field-Occupation Measure Inclusion with Forward-Backward Splitting) algorithm. This algorithm is designed to find Nash equilibrium of the limiting MFG by transforming the Nash equilibrium search into a monotone inclusion problem over occupation measures. We establish convergence results for this algorithm when the MFG model is fully known.
Then, in Section \ref{sec:mf-oml}, we propose \texttt{MF-OML} (Mean-Field-Occupation Measure Learning) which is extended from the \texttt{MF-OMI-FBS} algorithm to address $N$-player models where the models are unknown. To address the exploration-exploitation trade-off in the online RL setting, in each episode, we randomly select one agent to follow a fully exploratory policy, while the remaining agents implement policies derived from the normalized updates of an approximated version of \texttt{MF-OMI-FBS}. We are able to show that when evaluated by the cumulative deviation from Nash equilibria, the algorithm achieves a high probability regret bound of $\tilde{O}(M^{3/4}+N^{-1/2}M)$ for games with the strong Lasry-Lions monotonicity condition, where $M$ is the total number of episodes and $N$ is the number of agents of the game. This bound includes two components: the first, $\tilde{O}(M^{3/4})$, arises from the learning procedure itself, while the second, $\tilde{O}(N^{-1/2}M)$, results from the mean-field approximation error. For games that only satisfy the basic Lasry-Lions monotonicity condition, our algorithm exhibits a regret bound of $\tilde{O}(M^{11/12}+N^{-1/6}M)$.

{\color{black}
 To sum up, the contributions of this paper are two-fold:
 \begin{itemize}
     \item We propose \texttt{MF-OMI-FBS}, the first tractable globally convergent computational algorithm with fully polynomial iteration complexities for solving the Nash equlibrium of (monotone) MFGs without certain uniqueness assumptions or introducing (non-vanishing) regularization. 
     %manipulating the original problem with regularization. 
     Moreover, \texttt{MF-OMI-FBS} is simple and efficient to implement and naturally lends itself to generic MFGs that are beyond those studied under our theoretical framework. Empirical benchmarks against existing algorithms demonstrate that \texttt{MF-OMI-FBS} consistently outperforms the baselines.

     \item We propose \texttt{MF-OML}, the first fully polynomial online MARL algorithm for provably solving Nash equilibrium, up to mean-field approximation gaps that vanish as the number of players $N$ goes to infinity, beyond variants of zero-sum and potential games. We conduct numerical experiments to verify the sub-linear growth of regret and performance improvement as $N$ grows. 
 \end{itemize}
 }

% In comparison with most of the existing literature on RL for MFGs in which the algorithms are updated directly on policies, our method heavily relies on the connections between policies and occupation measures. On one hand, transferring the problem of finding Nash equilibrium to finding the corresponding occupation measure makes it easy to utilize the tools from optimization literature. On the other hand, it is also essential to transfer from the space of occupation measures back to the policies space when designing online RL algorithms.

\paragraph{Technical challenges and novelties.}  
Our method diverges significantly from most existing literature on RL for MFGs, which typically updates directly on policies. By converting the problem of finding a Nash equilibrium into one of identifying the corresponding occupation measure, our approach facilitates the use of optimization tools from the broader literature. Moreover, it is crucial to effectively transfer from the space of occupation measures back into the policy space when designing online RL algorithms. This dual transformation is key to our approach and underscores its novelty. %and potential as a tool for future extensions. %for further advancing the field of multi-agent systems.

In addition, the presence of dynamics introduces inherent new difficulties in online RL compared to stage games \cite{yardim2023stateless} as it introduces an unknown constraint set over occupation measures that needs to be estimated/learned. It is well-known that optimization problems are sensitive to the perturbations of constraint sets, making the analysis of learning algorithms difficult. To resolve this issue, we identify a nearly unconstrained reformulation of the projection onto the set of the occupation measures (\cf Lemma \ref{projection_d_vs_L}) and obtain a robust optimization problem which facilitates the analysis.

%but we nevertheless bypass such a challenge by making intricate use of the properties of occupation measure sets. %3) The statistical estimation errors presents new challenges due to the symmetry aggregation effects of mean-field terms that are not part of the MARL related analysis.

\paragraph{Other related works.} The idea of transforming the problem of finding Nash equilibria to an optimization problem over occupation measures follows from \cite{guo2024mf}, which %formulates the set of Nash equilibria of general MFGs to a compact smooth feasible set over occupation measure and thus 
obtains an optimization framework for solving Nash equilibria for MFGs and establishes local convergence results for the projected gradient descent algorithm. Our work focuses on a special class of monotone MFGs and designs algorithms based on operator splitting that achieve global convergence, which is further extended to the online RL setting. The idea of utilizing occupation measure has also been adopted in continuous time MFGs to obtain new existence results for relaxed
Nash equilibria and computational benefits \cite{bouveret2020mean,dumitrescu2023linear}. It is later used to study the sensitivity analysis of Stackelberg MFGs \cite{guo2022optimization}. The method of monotone operator splitting is also used in \cite{liu2020splitting,briceno2019implementation,nurbekyan2024monotone} to design computational algorithms for solving Nash equilibria in continuous-time monotone MFGs with fully known models.

{\color{black}
The computation of Nash equilibrium for (discrete-time) MFGs have been widely studied in the literature. However, existing works in the literature either establish convergence results only for the continuous-time limits of the proposed algorithms \cite{perrin2020fictitious,perolat21}, 
    require certain uniqueness assumptions \cite{guo2019learning,angiuli21,angiuli2022unified,subramanian2022decentralized,angiuli2023convergence} and (non-vanishing) regularization  \cite{cui2021approximately,xie2021learning,zhang2023learning}\footnote{Note that \cite{cui2021approximately,xie2021learning} suffers from inherent gaps due to the need for heavy regularization to ensure contractivity. In contrast, the analysis of \cite{zhang2023learning} is based on monotonicity and hence the regularization effect is less inherent. Nevertheless, the distance metric adopted in \cite{zhang2023learning} is a pseudo-metric given that the (unique regularized) Nash equilibrium mean-field distribution is not necessarily all positive, and hence cannot really guarantee convergence of the computed policies in general.}, 
     % require certain uniqueness (or contractivity) assumptions \cite{guo2019learning,angiuli21,angiuli2022unified,angiuli2023convergence} and regularization manipulation to enforce uniqueness and contractivity \cite{cui2021approximately,zhang2023learning}, 
    make stringent oracle assumptions that the Nash equilibrium of the stage mean-field games in all iterations can be solved exactly \cite{muller2021learning}, or focus on potential games \cite{geist2021concave} or the much easier (coarse) correlated equilibrium instead of Nash equilibrium \cite{muller2022learning}. In contrast, \texttt{MF-OMI-FBS} does not suffer from these limitations and neatly tackles all these restrictions.
}

{\color{black}\paragraph{Outline.} The paper begins by introducing the setting of $N$-player games and mean-field games (MFGs) in Section \ref{setup_section}. In Section \ref{sec:mf-omi}, we present the mean-field occupation measure inclusion framework for solving MFGs and establish the convergence properties of Algorithm \ref{MF-OMI-FBS-Cons} in Theorem \ref{FBS_convergence}. Section \ref{sec:mf-oml} extends this framework to an online learning setting for $N$-player games, culminating in Algorithm \ref{MF-OML} and the regret analysis provided in Theorem \ref{thm:mf_oml_regret}. %. Theoretical guarantees for the online version are provided in Theorem 14.
}

% \paragraph{Notation.} For any vector $x\in\mathbb{R}^n$, we use $x_{i:j}\in\mathbb{R}^{j-i+1}$ to denote the sub-vector containing its $i$-th to $j$-th elements. For any closed convex set $C\subseteq\mathbb{R}^n$, its normal cone operator $\mathcal{N}_C(\cdot)$ is 
% %the normal cone operator of $C$, which is 
% defined as a set-valued mapping with $\mathcal{N}_C(u)=\emptyset$ for $u\notin C$ and $\mathcal{N}_C(u)=\{v\in\mathbb{R}^n|v^\top(w-u)\leq 0,\,\forall w\in C\}$ for $u\in C$. In addition, the dual cone of $C$ is defined as the set $C^*=\{y\in\mathbb{R}^n|y^\top x\geq 0,\,\forall x\in C\}$.

%XXX. $\texttt{Unif}(A)$.

\section{$N$-player games and mean-field games}\label{setup_section}

%XXX. Consider renaming mean-field-type monotonicity to Lasry-Lions monotonicity?

\paragraph{Problem setup.} We consider symmetric Markov $N$-player games, with a common finite state space $\mathcal{S}=\{1,\dots,S\}$, a common finite action space $\mathcal{A}=\{1,\dots,A\}$, a finite planning horizon $T\geq 1$ and an initial state profile ${\bf s}_0=(s_0^1,\dots,s_0^N)\in\mathcal{S}^N$. Being symmetric, the rewards and dynamics of any agent  in the game depend only on the state-action pair of the agent, and the set of state-action pairs of the population (regardless of the order), or equivalently the empirical state-action distribution of the population. More precisely,  
at time $t\in\mathcal{T}:=\{0,\dots,T-1\}$, each agent $i\in[N]:=\{1,\dots,N\}$ receives a random reward $r_t(s_t^i,a_t^i,L_t^N)$ (with its expectation denoted as $R_t(s_t^i,a_t^i,L_t^N)$) when the game is at state profile ${\bf s}_t$ and taking action profile ${\bf a}_t$, and transitions to state $s_{t+1}^i$ independently (over the $N$ agents) with transition probability $P_t(s_{t+1}^i|s_t^i,a_t^i)$, where ${\bf s}_t=(s_t^1,\dots,s_t^N)\in\mathcal{S}^N$, ${\bf a}_t=(a_t^1,\dots,a_t^N)\in\mathcal{A}^N$, and $L_t^N(s,a)=\frac{1}{N}\sum_{i\in[N]}{\bf 1}\{s_t^i=s,a_t^i=a\}$ ($s\in\mathcal{S},\,a\in\mathcal{A}$) is the empirical distribution of the population. %For notational convenience, we denote $R_{\max}:=\max_{{\bf s}\in\mathcal{S}^N,{\bf a}\in\mathcal{A}^N,t\in\mathcal{T},i\in[N]}|R_t(s_t^i, a_t^i,L_t^N)|$. 

For any agent $i\in[N]$, an admissible strategy/policy $\pi_t$ ($t\in\mathcal{T}$) is a mapping from $\mathcal{S}$ to $\Delta(\mathcal{A})$, where $\Delta(\mathcal{X})$ is the set of probability distributions over $\mathcal{X}$. 
We denote the set of all such strategy/policy sequences $\pi=\{\pi_t\}_{t\in\mathcal{T}}$ as $\Pi$. When agent $i$ takes policy $\pi^i\in\Pi$ ($i\in[N]$), at each time $t\in\mathcal{T}$, given the agent states $s_t^i$ ($i\in[N]$), the actions $a_t^i\sim\pi_t^i(s_t^i)$ ($i\in[N]$) of all agents are taken independently. Note that here we consider the case where all agents take randomized/relaxed local policies depending only on their own local state, which is suitable for decentralized execution that is efficient in the large-population settings we focus on. For notational flexibility, we use $\pi_t(a_t|s_t)$ and $\pi_t(s_t,a_t)$ exchangeably to denote the $a_t$-th dimension of the probability vector $\pi_t(s_t)$. The goal is to find a Nash equilibrium (NE) of the game, which is defined below.
\begin{definition}[Nash equilibrium (NE) of a $N$-player game]
    A strategy profile $\boldsymbol{\pi}=\{\pi^i\}_{i\in[N]}$ with $\pi^i=\{\pi_t^i\}_{t\in\mathcal{T}}\in\Pi$ is called a Nash equilibrium (NE) of the $N$-player game if and only if $\texttt{NashConv}(\boldsymbol{\pi})=0$. Here \texttt{NashConv} is defined as
\begin{equation}\label{nashconv_def}
\texttt{NashConv}(\boldsymbol{\pi}):=\dfrac{1}{N}\sum\nolimits_{i\in[N]}\left(\max_{\tilde{\pi}^i\in\Pi}V^i(\pi^1,\dots,\tilde{\pi}^i,\dots,\pi^N)-V^i(\boldsymbol{\pi})\right),
\end{equation}
where $V^i(\boldsymbol{\pi}):=\mathbb{E}_{\boldsymbol{\pi}}\left[\sum_{t\in\mathcal{T}}r_t(s_t^i,a_t^i,L_t^N)\right]$ is the expected cumulative reward of agent $i$ (with initial state $s_0^i$), and %$\mathbb{E}_{\boldsymbol{\pi}}$ denotes the expectation 
the expectation is over the trajectory of states and actions when the agents take independent actions  $a_t^j\sim \pi_t(s_t^j)$ for $j\in[N]$ ($t\in\mathcal{T}$).%\footnote{In the following, when the randomness/distribution of the state-action trajectories is clear from the context, we will simply write the expectation as $\mathbb{E}$ for brevity. In addition, the states and actions appearing in the expectation are always assumed to follow the distribution specified by the expectation, even if they are used to denote other things outside of the expectation.} 
\end{definition}

At this equilibrium point, no player has the incentive to unilaterally deviate from their chosen strategy, given the strategies chosen by the others. Intuitively, $\texttt{NashConv}$ characterizes the aggregated single-agent side sub-optimality of a strategy profile, and an NE is a strategy profile with no such sub-optimality. 
It is well-known that showing the existence of NEs and finding them  are difficult for general $N$-player games \cite{saldi2018markov}. To alleviate this, a relaxation of the Nash equilibrium concept is introduced.
\begin{definition}[$\epsilon$-Nash equilibrium of an $N$-player game]
   A strategy profile $\boldsymbol{\pi}$ is called a $\epsilon$-Nash equilibrium (NE) if $\texttt{NashConv}(\boldsymbol{\pi})\leq \epsilon$. 
\end{definition}

For notational simplicity, when the strategy profile $\boldsymbol{\pi}=\{\pi^i\}_{i\in[N]}$ is symmetric, namely $\pi^i=\pi\in\Pi$  for all $i\in[N]$, we also denote $\texttt{NashConv}(\pi):=\texttt{NashConv}(\boldsymbol{\pi})$. We say that $\pi$ is an ($\epsilon$-)NE of the $N$-player game if and only if the strategy profile $\boldsymbol{\pi}:=\{\pi^i\}_{i\in[N]}$ with $\pi^i=\pi$ is an ($\epsilon$-)NE.

\paragraph{Mean-field games.} As an approximate and limiting model of the aforementioned symmetric Markov $N$-player game with a large population size $N$, we introduce a mean-field game (MFG) with the same finite time horizon $T<\infty$, finite state space $\mathcal{S}$ and finite action space $\mathcal{A}$. Such a game consists of an infinite number of symmetric/anonymous players, and a representative player independently starts with an initial state $s_0\sim \mu_0^N\in\Delta(\mathcal{S})$, where $\mu_0^N(s)=\frac{1}{N}\sum_{i\in[N]}{\bf 1}\{s_0^i=s\}$ ($s\in\mathcal{S}$) is the empirical initial state distribution of the aforementioned $N$-player game being approximated. For any policy sequence $\pi\in\Pi$, we denote by $L^\pi=\{L_t^{\pi}\}_{t\in\mathcal{T}}$ as the mean-field flow induced from $\pi$, defined recursively as
\begin{equation}\label{Gamma_pi}
\begin{split}
    L_0^\pi(s,a)&=\mu_0^N(s)\pi_0(a|s),\\
    L_{t+1}^{\pi}(s',a')&=\pi_{t+1}(a'|s')\sum_{s\in\mathcal{S},a\in\mathcal{A}}P_t(s'|s,a)L_t^\pi(s,a),\quad t\in\{0,\dots,T-2\}.
\end{split}
\end{equation}
Namely, $L_t^\pi$ denotes the joint state-action distribution among all players at time $t$, and can be viewed as an approximation of the empirical state-action distribution $L_t^N$ of the population under policy sequence $\pi$. In each time step $t\in\mathcal{T}$, the representative player takes an action $a_t\in\mathcal{A}$ following the randomized policy $\pi_t(\cdot|s_t)\in\Delta(\mathcal{A})$, receives a reward $r_t(s_t,a_t,L_t^\pi)$ and moves to a new state $s_{t+1}$ following the transition probabiltiy $P_t(\cdot|s_t,a_t)$. 
%, with $L_t\in\Delta(\mathcal{S}\times\mathcal{A})$ being the mean-field flow, namely the joint state-action distribution among all players at time $t$. 
Similar to the $N$-player games being approximated, we can define the (mean-field) Nash equilibrium (NE) solution concept of the MFG here. A policy sequence $\pi=\{\pi_t\}_{t\in\mathcal{T}}$ is an NE of the MFG if and only if its exploitability $\texttt{Expl}(\pi)=0$, where the exploitabilty is defined as 
\begin{equation}\label{exploitablity}
\texttt{Expl}(\pi)=\max_{\pi'\in\Pi}V^{\pi'}(L^\pi)-V^{\pi}(L^\pi).
\end{equation}
% Here $L^\pi$ is the mean-field flow induced from $\pi$, defined recursively as
% \begin{equation}\label{Gamma_pi}
% \begin{split}
%     L_0^\pi(s,a)&=\mu_0(s)\pi_0(a|s),\\
%     L_{t+1}^{\pi}(s',a')&=\pi_{t+1}(a'|s')\sum_{s\in\mathcal{S},a\in\mathcal{A}}P_t(s'|s,a,L_t^{\pi})(s,a),\quad t\in\{0,\dots,T-1\}.
% \end{split}
% \end{equation}
Here given any mean-field flow $L=\{L_t\}_{t\in\mathcal{T}}\subseteq\Delta(\mathcal{S}\times\mathcal{A})$, $V^{\pi}(L)$ is defined as the expected total reward of $\pi$ of the $L$-induced Markov decision process (MDP) $\mathcal{M}(L)$ with rewards $\tilde{r}_t^L(s_t,a_t):=r_t(s_t,a_t,L_t)$ and transitions $P_t(s_{t+1}|s_t,a_t)$. 
%$\tilde{P}_t^L(s_{t+1}|s_t,a_t):=P_t(s_{t+1}|s_t,a_t,L_t)$.
Similarly, a policy sequence $\pi$ is called an $\epsilon$-NE if and only if $\texttt{Expl}(\pi)\leq \epsilon$. Intuitively, the exploitability of a policy sequence characterizes the room for unilateral improvement of any representative player, mimicking the counterpart definition for $N$-player games. Note that the exploitability is always non-negative by definition.

\paragraph{Outstanding assumptions and approximation guarantees.} Throughout this paper, we consider the following assumptions of Lipschitz continuity  and monotonicity (on the expected rewards). %They will all be explicitly mentioned whenever they are assumed.  
\begin{assumption}\label{outstanding_assumptions_1}
The expected rewards %$R_t(s,a,L_t)$ 
are $C_R$-Lipschitz continuous in $L_t\in\Delta(\mathcal{S}\times\mathcal{A})$ for any $s\in\mathcal{S},a\in\mathcal{A},t\in\mathcal{T}$, namely $|R_t(s,a,L_t^{(1)})-R_t(s,a,L_t^{(2)})|\leq C_R\|L_t^{(1)}-L_t^{(2)}\|_1$ for any $L_t^{(1)},L_t^{(2)}\in\Delta(\mathcal{S}\times\mathcal{A})$. 
Furthermore, we also assume that the random rewards are a.s. bounded, \ie, $|r_t(s,a,L)|\leq R_{\max}<\infty$ a.s. for all $s\in\mathcal{S},a\in\mathcal{A},t\in\mathcal{T},L\in\Delta(\mathcal{S}\times\mathcal{A})$.%\footnote{}

%For notational convenience, we denote $R_{\max}:=\sup_{t\in\mathcal{T},s\in\mathcal{S},a\in\mathcal{A},L\in\Delta(\mathcal{S}\times\mathcal{A})}|R_t(s, a,L)|$, which is finite under the continuity assumption here.

%In addition, the rewards are monotone in the sense that for any two mean-field flows $\{L_t^{(1)}\}_{t\in\mathcal{T}}$ and $\{L_t^{(2)}\}_{t\in\mathcal{T}}$, we have 
% $\sum_{t\in\mathcal{T},s\in\mathcal{S},a\in\mathcal{A}}(r_t(s,a,L_t^{(1)})-r_t(s,a,L_t^{(2)}))(L_t^{(1)}(s,a)-L_t^{(2)}(s,a))\leq 0$. 
\end{assumption}
\begin{assumption}\label{outstanding_assumptions_2}
%The rewards $r_t(s,a,L_t)$ are $C$-Lipschitz continuous in $L_t\in\Delta(\mathcal{S}\times\mathcal{A})$ for any $s\in\mathcal{S},a\in\mathcal{A},t\in\mathcal{T}$. In addition, 
The expected rewards are $\lambda$-Lasry-Lions monotone\footnote{We call it Lasry-Lions monotone as it was first introduced in the seminal work of \cite{lasry2007mean} on mean-field games. This is also to distinguish it from another commonly adopted monotone assumption in the differentiable normal-form/stage games literature \cite{lin2020finite}, which assumes monotonicity of the reward gradients w.r.t. actions.} ($\lambda\geq 0$) in the sense that for any $\{L_t^{(1)}\}_{t\in\mathcal{T}}$, $\{L_t^{(2)}\}_{t\in\mathcal{T}}\subseteq\Delta(\mathcal{S}\times\mathcal{A})$, we have 
\[
\sum_{t\in\mathcal{T},s\in\mathcal{S},a\in\mathcal{A}}(R_t(s,a,L_t^{(1)})-R_t(s,a,L_t^{(2)}))(L_t^{(1)}(s,a)-L_t^{(2)}(s,a))\leq -\lambda\sum_{t\in\mathcal{T},s\in\mathcal{S},a\in\mathcal{A}}(L_t^{(1)}(s,a)-L_t^{(2)}(s,a))^2.
\]
\end{assumption}
Note that when $\lambda>0$, the property in Assumption \ref{outstanding_assumptions_2} is typically referred to as strongly (Lasry-Lions) monotone. When $\lambda=0$, Assumption \ref{outstanding_assumptions_2} is a slight generalization of the standard (Lasry-Lions) monotonicity assumption in the MFG literature. More precisely, except for being implicitly adopted in \cite{guo2024mf}, the literature on monotone MFGs are largely restricted to rewards that depend on $\mu_t:=\sum_{a\in\mathcal{A}}L_t(\cdot,a)$ \cite{elie2020convergence,perrin2020fictitious,perolat21,geist2021concave,zhang2023learning}. Furthermore, throughout the paper, whenever Assumption \ref{outstanding_assumptions_2} is assumed, we can indeed replace it with a weaker assumption that only require that the expected rewards are monotone on ``reachable'' (or induced) mean-field flows in the sense that for any two policies $\pi^1,\pi^2\in\Pi$, we have 
\[
\sum_{t\in\mathcal{T},s\in\mathcal{S},a\in\mathcal{A}}(R_t(s,a,L_t^{\pi^1})-R_t(s,a,L_t^{\pi^2}))(L_t^{\pi^1}(s,a)-L_t^{\pi^2}(s,a))\leq 0.
\]
But for clarity, we stick to slightly stronger Assumption \ref{outstanding_assumptions_2} which has a more consistent form compared to the monotonicity assumptions in the literature.

Under Assumption \ref{outstanding_assumptions_1}, it can be shown that an NE solution exists for the MFG \cite{saldi2018markov,cui2021approximately,guo2024mf}. In addition, we also have the following approximation guarantees of the approximating MFG for the original $N$-player game. The proof can be found in Section \ref{n2mfg_proof}.

\begin{theorem}\label{n2mfg}
Suppose that Assumption \ref{outstanding_assumptions_1} holds. Then if $\pi\in\Pi$ is an $\epsilon$-NE of the MFG, it is also an $\epsilon'$-NE of the original $N$-player game, where $\epsilon'=\epsilon+2C_R\sqrt{\dfrac{\pi}{2}}SAT/\sqrt{N} + C_RSAT/N$. 
\end{theorem}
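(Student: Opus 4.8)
The plan is to show, for the symmetric profile $\boldsymbol{\pi}=\{\pi^i\}_{i\in[N]}$ with every $\pi^i=\pi$, that each agent's unilateral deviation gain in the $N$-player game differs from the mean-field exploitability only by the stated approximation error. The starting point is that the transitions $P_t(\cdot\mid s,a)$ do \emph{not} depend on the population distribution, so when all agents follow a common policy their trajectories are mutually independent; consequently $\mathbb{E}[L_t^N(s,a)]=L_t^\pi(s,a)$ holds \emph{exactly}, and $L_t^N$ is an average of independent indicator vectors that concentrates around $L_t^\pi$. For a fixed agent $i$ I would decompose its deviation gain $\max_{\tilde\pi^i\in\Pi}V^i(\pi,\dots,\tilde\pi^i,\dots,\pi)-V^i(\boldsymbol\pi)$ into three pieces by inserting mean-field quantities evaluated at the true initial state $s_0^i$: (i) the gap between the $N$-player best-response value and the best-response value in the induced MDP $\mathcal{M}(L^\pi)$ started from $s_0^i$; (ii) the mean-field exploitability from $s_0^i$; and (iii) the gap between the mean-field value of $\pi$ from $s_0^i$ and the $N$-player value $V^i(\boldsymbol\pi)$.

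The crucial simplification is that in pieces (i) and (iii) the evaluated policy \emph{and} the transition kernel are identical on the two sides; only the population argument of the reward changes ($L_t^N$ versus $L_t^\pi$). Coupling the two trajectories to be identical, each such value gap equals $\mathbb{E}\big[\sum_{t}\big(R_t(s_t^i,a_t^i,L_t^N)-R_t(s_t^i,a_t^i,L_t^\pi)\big)\big]$, which by the Lipschitz Assumption \ref{outstanding_assumptions_1} is at most $C_R\sum_{t\in\mathcal{T}}\mathbb{E}\|L_t^N-L_t^\pi\|_1$. This route never invokes $R_{\max}$, explaining why only $C_R$ appears in the bound; the a.s.\ boundedness is used solely to ensure finiteness of expectations and to pass from random rewards $r_t$ to their means $R_t$. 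Averaging the decomposition over $i\in[N]$, piece (ii) collapses exactly to $\texttt{Expl}(\pi)$: because a best response in the finite-horizon MDP $\mathcal{M}(L^\pi)$ can be taken to be a single Markov policy simultaneously optimal from every initial state, $\tfrac1N\sum_i\max_{\tilde\pi}V^{\tilde\pi}(L^\pi;s_0^i)=\max_{\pi'}V^{\pi'}(L^\pi)$, and the empirical initial-state average of the mean-field value of $\pi$ reproduces $V^\pi(L^\pi)$.

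For pieces (i) and (iii) I would bound $\mathbb{E}\|L_t^N-L_t^\pi\|_1$ in two regimes. When all agents use $\pi$ (piece (iii)) one has $\mathbb{E}[L_t^N]=L_t^\pi$, leaving only the fluctuation $\mathbb{E}\|L_t^N-\mathbb{E}L_t^N\|_1$; when agent $i$ deviates to $\tilde\pi^i$ (piece (i)) the coordinatewise mean shifts by $\tfrac1N(p_i^{\tilde\pi^i}-p_i^{\pi})$, adding an $O(1/N)$ bias — this single-agent self-interaction is the source of the $C_R SAT/N$ correction. The fluctuation is handled coordinatewise: $N L_t^N(s,a)$ is a sum of independent Bernoulli variables, and a sharp mean-absolute-deviation estimate for such sums gives a per-coordinate bound of order $\sqrt{\pi/2}/\sqrt{N}$, hence $\mathbb{E}\|L_t^N-\mathbb{E}L_t^N\|_1\le \sqrt{\pi/2}\,SA/\sqrt{N}$ after summing the $SA$ state-action pairs. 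Summing over the $T$ time steps and combining pieces (i) and (iii), each contributing one factor $C_R\sum_t(\cdot)$, produces the leading term $2C_R\sqrt{\pi/2}\,SAT/\sqrt N$, and together with piece (ii)$=\texttt{Expl}(\pi)\le\epsilon$ yields the claimed $\epsilon'$.

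The step I expect to be most delicate is controlling piece (i) \emph{uniformly over the deviating policy} $\tilde\pi^i$: the bound on $\big|\max_{\tilde\pi}V^i(\dots)-\max_{\tilde\pi}V^{\tilde\pi}(L^\pi;s_0^i)\big|$ follows from $|\max f-\max g|\le\max|f-g|$ only if the per-policy gap $C_R\sum_t\mathbb{E}\|L_t^N-L_t^\pi\|_1$ is bounded independently of $\tilde\pi^i$. This is ensured because the fluctuation estimate rests only on a variance bound uniform in the Bernoulli parameters and the bias is uniformly $O(1/N)$; making this independence explicit, together with pinning down the sharp constant in the mean-absolute-deviation inequality, is the main technical care required. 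The remaining work—passing from $r_t$ to $R_t$ and verifying the exact coefficient of the $1/N$ term—is routine.
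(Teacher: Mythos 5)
Your proposal is correct and follows essentially the same route as the paper's proof: both reduce the claim to bounding $|\texttt{Expl}(\pi)-\texttt{NashConv}(\pi)|$ via the exact identity $\mathbb{E}[L_t^{N}]=L_t^{\pi}$ (from the decoupled dynamics), a Hoeffding-plus-tail-integration mean-absolute-deviation bound of $\sqrt{\pi/(2N)}$ per state-action pair, an extra $O(1/N)$ correction for the single deviating agent, the $|\max f-\max g|\le\max|f-g|$ step, and the observation that one Markov policy is simultaneously optimal from every initial state so the per-agent best responses collapse to $\max_{\pi'}V^{\pi'}(L^{\pi})$. The only differences are organizational (your per-agent three-piece decomposition versus the paper's directly averaged one) and do not change the substance of the argument.
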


\begin{remark}
The approximation guarantees of MFGs have been widely studied in the literature \cite{saldi2018markov,cui2021approximately,yardim2023stateless,yardim2022policy}. We provide the result and the proof here for self-containedness. We remark that here we do not need the assumption that players in the $N$-player game have i.i.d. initial state distributions which is commonly assumed in the literature. In addition, by utilizing the structure that the dynamics of the players are decoupled from each other (\cf \cite{perrin2020fictitious,perolat21,geist2021concave} for the same assumption), the approximation error we obtain depends linearly on the time horizon $T$, which is in sharp contrast to the exponential growth in the general setting \cite{yardim2024mean}. 

%Nevertheless, most existing results are either asymptotic \cite{saldi2018markov,cui2021approximately}, focused on stage games \cite{yardim2023stateless}, or require mixing/ergodicity assumptions \cite{yardim2022policy}. In addition, all the existing works require i.i.d. initial state distributions across agents. Such a requirement excludes the possibility to allow for agents with different types \cite{cui2021learning,guo2022mf}, which serves as an important and typical source of heterogeneity, and makes it possible to model symmetric yet non-homogeneous problems such as ad auctions \cite{guo2023mesob}. Our result above provides the first non-asymptotic approximation guarantees of MFGs beyond stage games, without ergodicity or i.i.d. initial state distributions assumptions, under mild Lipschitz continuity assumptions, yet at the price of a simpler decoupled state transition model (which is also common in practice and is commonly assumed in the MFG literature \cite{perrin2020fictitious,perolat21,geist2021concave}, though). Finally, we remark that  the recent work \cite{yardim2024mean} also studies mean-field approximation errors beyond stage games and without ergodicity assumptions, and allows for more general mean-field dependent transition models. However, the initial state distributions of the agents are still assumed to be i.i.d., and the established approximation error grows exponentially in the the time horizon $T$.
\end{remark}

\section{Solving mean-field Nash equilibria via occupation-measure inclusion}\label{sec:mf-omi}

%XXX. Change to $\lambda+\epsilon$-monotone and constant $\alpha$ and $\eta$. Then state two versions of main convergence results but otherwise shared.

In this section, we first study computational algorithms for finding the NEs of the MFG that is used to approximate the NEs of the original symmetric Markov $N$-player game, assuming full knowledge of the MFG. This serves as the stepping stone for designing the mean-field RL algorithm for solving the original $N$-player game in the episodic online RL setup in the next section. Throughout this section, to better conform to the MFG literature, we slightly relax the model assumption to allow the initial state to follow an arbitrary (fixed) distribution $\mu_0\in\Delta(\mathcal{S})$, with the empirical distribution $\mu_0^N$ as the special case. % to emphasize the fact the results apply to an MFG model with a generic initial state distribution, not necessarily some empirical distribution.  %To avoid confusion 

\subsection{Representing mean-field NE with occupation measure} 
Motivated by the appearance of the induced MDP in the definition of mean-field NE in \eqref{exploitablity}, we first recall the classical result that an MDP can be represented as a linear program of the occupation measure. Such an observation is first noted in \cite{manne1960linear} in the context of (single-agent) MDPs, and more recently applied to MFGs in \cite{guo2024mf} to propose an optimization-framework called MF-OMO (Mean-Field Occupation-Measure Optimization) for computing mean-field NEs. However, due to the inherent non-convexity of the quadratic penalty formulation, convergence of algorithms under the MF-OMO framework to  NE solutions is only shown when the initialization is sufficiently close. In this work, we instead propose a monotone inclusion framework of occupation measures, called MF-OMI (Mean-Field Occupation-Measure Inclusion) to exploit the underlying monotonicity of the MFG, which eventually leads to globally convergent algorithms to NEs. 
%More precisely, we have the following proposition.

%XXX. The authors of \cite{guo2022mf} then propose to solve the feasibility problem \eqref{mf-omo-feasibility} by minimizing the quadratic penalties of the constraints violations, which is referred to as MF-OMO (short for mean-field occupation-measure optimization). Due to the inherent non-convexity of the quadratic penalty formulation, convergence to  NE solutions is only shown when the initialization is sufficiently close.

%XXX. Starting again from the feasibility formulation \eqref{mf-omo-feasibility}, we now proceed into a different direction from MF-OMO, which we will utilize to exploit the underlying monotonicity of the MFG (if any) to obtain globally convergent algorithms to NE solutions. For notational simplicity, we denote $m=S(T+1)$ and $n=SA(T+1)$.

\paragraph{Occupation measure.} We begin by introducing a new variable $d_t(s,a)$ for any $t\in\mathcal{T}$, $s\in \mathcal{S},a\in\mathcal{A}$, which represents the occupation measure of the representative agent under some policy sequence $\pi=\{\pi_t\}_{t\in\mathcal{T}}$ in an  MDP with transition kernel $P_t(s_{t+1}|s_t,a_t)$ and initial state distribution $\mu_0$, \ie, $d_t(s,a)=\mathbb{P}(s_t=s,a_t=a)$, with $s_0\sim\mu_0$, $s_{t+1}\sim P_t(\cdot|s_t,a_t)$, $a_t\sim \pi_t(\cdot|s_t)$ for $t=0,\dots,T-2$. By definition, we have $d_t(s,a)=L_t^{\pi}(s,a)$. 
Given the occupation measure $d=\{d_t\}_{t\in\mathcal{T}}$, define a set-valued mapping $\texttt{Normalize}$ that retrieves the policy from the occupation measure. This mapping $\texttt{Normalize}$ maps from a sequence $\{d_t\}_{t\in\mathcal{T}}\subseteq\mathbb{R}_{\geq 0}^{SA}$ to a set of policy sequences 
    $\{\pi_t\}_{t\in\mathcal{T}}$:  %\subseteq\{\pi:\mathcal{S}\rightarrow\Delta(\mathcal{A})\}$: 
    for any $\{d_t\}_{t\in\mathcal{T}}\subseteq\mathbb{R}_{\geq 0}^{SA}$, $\pi\in\texttt{Normalize}(d)$ if and only if
    %\begin{equation}\label{eq:relation}
       $ \pi_t(a|s)=\frac{d_t(s,a)}{\sum_{a'\in\mathcal{A}}d_t(s,a')}$
    %\end{equation}
    when $\sum_{a'\in\mathcal{A}}d_t(s,a')>0$, and $\pi_t(\cdot|s)$ is an arbitrary probability distribution over $\mathcal{A}$ when  $\sum_{a'\in\mathcal{A}}d_t(s,a')=0$.
%Just copy paste the MF-OMO paragraph and rephrase a bit. 

\paragraph{Compact notation.} To facilitate the presentation below, hereafter we define vectors $c_R(L)\in\mathbb{R}^{SAT}$ and $b\in\mathbb{R}^{ST}$ as %$A_L\in\mathbb{R}^{S(T+1)\times SA(T+1)}$ are defined by %we have
    \begin{equation}\label{kkt_param1}
    % c_L=[-r_0(\cdot,\cdot,L_0), \dots,-r_T(\cdot,\cdot,L_T)]\in\mathbb{R}^{SA(T+1)},\quad b=[0,\dots,0,\mu_0]\in\mathbb{R}^{ST+S},
    c_R(L)=\left[
\begin{array}{c}
-R_0(\cdot,\cdot,L_0)\\
\vdots\\
-R_{T-1}(\cdot,\cdot,L_{T-1})
\end{array}
\right],\quad
b=\left[
\begin{array}{c}
0\\
\vdots\\
0\\
\mu_0
\end{array}
\right],
    \end{equation}
    %\todo[inline]{ $c_L\in\mathbb{R}^{|\mathcal{S}||\mathcal{A}|(T+1)}$}
    with the subscript $R=\{R_t\}_{t\in\mathcal{T}}$ denoting the dependency on the expected rewards, and $R_t(\cdot,\cdot,L_t)\in\mathbb{R}^{SA}$ being a flattened vector (with column-major order) of the expected rewards $R_t(s,a,L_t)$.  %of a matrix in $\mathbb{R}^{S\times A}$, 
    In addition, we define the matrix $A_P\in\mathbb{R}^{ST\times SAT}$  as 
    % proposed below.
%XXX. A simpler representation of $A_L$ that facilitates both coding and theoretical derivations. 
\begin{equation}\label{kkt_param2}
A_P=\left[
\begin{array}{ccccccc}
   W_0  & -Z &  0 & 0 & \cdots &0 & 0\\
   0 & W_1  & -Z &  0 & \cdots &0 & 0\\ 
   0 & 0 & W_2 & -Z & \cdots & 0 &0 \\
   \vdots & \vdots & \vdots & \vdots & \ddots & \vdots &\vdots \\
   0 & 0 & 0 & 0 & \cdots & W_{T-1} & -Z\\
   Z & 0 & 0 & 0 & \cdots & 0 & 0
\end{array}\right].
\end{equation}
Here the subscript $P=\{P_t\}_{t\in\mathcal{T}}$ denotes the dependency on the transition probabilities,  %$r_t(\cdot,\cdot,L_t)\in\mathbb{R}^{SA}$ is a flattened vector (with column-major order),
$W_t\in \mathbb{R}^{S\times SA}$ is the matrix with the $s$-th row ($s=1,\dots,S$) being the  flattened vector  $[P_t(s|\cdot,\cdot)]\in\mathbb{R}^{SA}$ (with column-major order),  
and the matrix $Z$ is defined as 
\begin{equation}\label{eq:defn-Z}
 Z:=[\overbrace{I_{S},\dots,I_{S}]}^{A}\in \mathbb{R}^{S\times SA},  
\end{equation}
where $I_S$ is the identity matrix with dimension $S$. 
In addition, we also represent the mapping from $\pi$ to $L^\pi$ in \eqref{Gamma_pi} as $\Gamma(\pi;P)$ to make the dependency on $P$ explicit. 

For notational brevity, we alternatively use $c(L)$, $A$ and $L^{\pi}$ to denote $c_R(L)$, $A_P$ and $\Gamma(\pi;P)$ under the ground-truth rewards $R$ and transitions $P$, and mainly highlight the dependencies on $R$ and $P$ when they are replaced with their approximations. 
%With the above notation, we can now write the linear constraints \eqref{consistency_condition_LP} and the equality \eqref{Vpi==rd} compactly as $Ad=b,d\geq 0$ and $V^{\pi}(L)=-c_L^\top d$, resp., where (with slight abuse of notation) $d\in\mathbb{R}^{SA(T+1)}$ here is flattened/vectorized in the same order as $c_L$ from the original vector sequence $\{d_t\}_{t\in\mathcal{T}}\subseteq\mathbb{R}^{SA}$. Accordingly, hereafter both $d=\{d_t\}_{t\in\mathcal{T}}$ and $L=\{L_t\}_{t\in\mathcal{T}}$ are viewed as flattened vectors (with column-major order) in $\mathbb{R}^{SA(T+1)}$ or a sequence of $(T+1)$ flattened vectors (with column-major order) in $\mathbb{R}^{SA}$, depending on the context, and we use $L_t(s,a)$ and $L_{s,a,t}$ (resp. $d_t(s,a)$ and $d_{s,a,t}$) alternatively. 
With slight abuse of notation, we also use  $d\in\mathbb{R}^{SAT}$ to denote the  flattened/vectorized in the same order as $c(L)$ from the original vector sequence $\{d_t\}_{t\in\mathcal{T}}\subseteq\mathbb{R}^{SA}$. Accordingly, hereafter both $d=\{d_t\}_{t\in\mathcal{T}}$ and $L=\{L_t\}_{t\in\mathcal{T}}$ are viewed as flattened vectors (with column-major order) in $\mathbb{R}^{SAT}$ or a sequence of $T$ flattened vectors (with column-major order) in $\mathbb{R}^{SA}$, depending on the context, and we use $L_t(s,a)$ and $L_{s,a,t}$ (resp. $d_t(s,a)$ and $d_{s,a,t}$) alternatively.

In \cite{guo2024mf}, it is shown that $\pi^\star\in\Pi$ is an NE of the MFG if and only if $\exists d^\star,L^\star\in\mathbb{R}^{SAT}$, such that $\pi^\star\in\texttt{Normalize}(d^\star)$ and that the following two conditions hold: (A) $d^\star$ solves the linear program which minimizes $c(L^\star)^\top d$ subject to $Ad=b$, $d\geq 0$; (B) $L^\star=d^\star$. Note that condition (A) is exactly the well-known occupation-measure linear program reformulation of the $L^\star$-induced MDP, as introduced at the beginning of this section.  %XXX. Shall we mention $d^\star=L^{\pi^\star}$ here or $AL^{\pi^\star}=b$, etc.? No need and later stated as a helper lemma \ref{consistency_recover}. 

Last but not least, we adopt the following notation. For any vector $x\in\mathbb{R}^n$, we use $x_{i:j}\in\mathbb{R}^{j-i+1}$ to denote the sub-vector containing its $i$-th to $j$-th elements. For any closed convex set $C\subseteq\mathbb{R}^n$, its normal cone operator $\mathcal{N}_C(\cdot)$ is 
%the normal cone operator of $C$, which is 
defined as a set-valued mapping with $\mathcal{N}_C(u)=\emptyset$ for $u\notin C$ and $\mathcal{N}_C(u)=\{v\in\mathbb{R}^n|v^\top(w-u)\leq 0,\,\forall w\in C\}$ for $u\in C$. In addition, the dual cone of $C$ is defined as the set $C^*=\{y\in\mathbb{R}^n|y^\top x\geq 0,\,\forall x\in C\}$.

%XXX. Remember to mention briefly the fact that $\pi$ solves best-response condition/single-agent side MDP iff exists $d$ such that $\pi\in\Pi(d)$ and (7) in MF-OMO holds. This is mainly to build some intuition between the compact notation and the original problem/model parameters $P$ and $r/R$. 

\paragraph{MF-OMI: Mean-field occupation-measure inclusion.} We now introduce MF-OMI, an inclusion problem formulation of MFGs with occupation-measure variables, which we will utilize to exploit the underlying monotonicity of the MFGs to obtain globally convergent algorithms to NE solutions. %For notational simplicity, we denote $m=S(T+1)$ and $n=SA(T+1)$. XXX. We don't need these notations of $m,n$ anymore? Free us to have more variables to use as well?
\begin{theorem}[MF-OMI]\label{thm:mf-omi}
Finding an NE solution to the MFG is equivalent to solving the following inclusion problem, referred to as MF-OMI (Mean-Field Occupation-Measure Inclusion):
\begin{equation}\label{mf-omi}
  \text{Find $d$ such that }  0\in c(d)+\mathcal{N}_{\{x|Ax=b,x\geq 0\}}(d), \tag{MF-OMI}%\text{ and then let $v=F(u)$},
\end{equation}
More precisely, if $d$ is a solution to \eqref{mf-omi}, then any $\pi\in\texttt{Normalize}(d)$ is an NE of the original MFG. And if $\pi$ is an NE of the original MFG, then $L^\pi$ is a solution to \eqref{mf-omi}. 
\end{theorem}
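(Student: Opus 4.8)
The plan is to establish the two-way equivalence stated in Theorem \ref{thm:mf-omi} by leveraging the characterization of mean-field NE via conditions (A) and (B) that was recalled from \cite{guo2024mf} just before the statement. Recall that $\pi^\star$ is an NE if and only if there exist $d^\star,L^\star$ with $\pi^\star\in\texttt{Normalize}(d^\star)$, such that (A) $d^\star$ solves the linear program $\min_{d}\, c(L^\star)^\top d$ subject to $Ad=b,\,d\geq 0$, and (B) $L^\star=d^\star$. The key bridge I would use is that for a fixed cost vector, a feasible point $d^\star$ solves the linear program over the polyhedron $C:=\{x\mid Ax=b,\,x\geq 0\}$ if and only if $0\in c(L^\star)+\mathcal{N}_C(d^\star)$; this is the standard first-order optimality (variational inequality) characterization of a linear program over a closed convex set, and it is both necessary and sufficient here precisely because the objective is linear (hence convex) and the constraint set is a convex polyhedron, so no constraint qualification issues arise.

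First I would prove the reverse direction (NE $\Rightarrow$ solution). Suppose $\pi$ is an NE of the MFG. By conditions (A)--(B) there exists $d^\star$ with $\pi\in\texttt{Normalize}(d^\star)$, $d^\star=L^\star=L^\pi$, and $d^\star$ solving the LP with cost $c(L^\star)$. Since $d^\star=L^\pi$, I can substitute $L^\star=d^\star$ into the cost, so $c(L^\star)=c(d^\star)$, and the LP optimality of $d^\star$ becomes $0\in c(d^\star)+\mathcal{N}_C(d^\star)$ via the variational characterization above. This is exactly \eqref{mf-omi} evaluated at $d=d^\star=L^\pi$, so $L^\pi$ solves \eqref{mf-omi}.

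Next I would prove the forward direction (solution $\Rightarrow$ NE). Suppose $d$ solves \eqref{mf-omi}, i.e. $0\in c(d)+\mathcal{N}_C(d)$. The membership forces $\mathcal{N}_C(d)\neq\emptyset$, which by the convention stated in the excerpt means $d\in C$, so $d$ is feasible ($Ad=b$, $d\geq0$). Now set $L^\star:=d$ and $d^\star:=d$, so condition (B) holds by construction. Applying the variational characterization in the converse direction, $0\in c(d)+\mathcal{N}_C(d)$ with $c(d)=c(L^\star)$ says precisely that $d^\star$ minimizes $c(L^\star)^\top x$ over $C$, which is condition (A). Hence for any $\pi\in\texttt{Normalize}(d)$, conditions (A) and (B) are met with the associated $d^\star,L^\star$, so $\pi$ is an NE of the MFG. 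I would also note the consistency point that $L^{\pi}=d$ for any such $\pi$: this follows because the feasibility constraint $Ad=b$ encodes exactly the flow recursion \eqref{Gamma_pi}, so that the occupation measure generated by any normalized policy reproduces $d$ on the support, and off the support the $\texttt{Normalize}$ choice is irrelevant since those entries are zero.

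The main obstacle I expect is the careful handling of the \texttt{Normalize} map on states with zero occupation mass and the resulting non-uniqueness of $\pi$. On states $s$ with $\sum_{a}d_t(s,a)=0$, the policy is defined arbitrarily, and I must verify that every choice still yields $L^{\pi}=d$ and preserves NE-optimality; the cleanest way is to observe that such zero-mass entries contribute nothing to either the cost $c(L^\pi)^\top d$ or to the downstream flow in \eqref{Gamma_pi}, so the induced mean-field flow and exploitability are insensitive to the arbitrary completion. A secondary technical point is to state cleanly the LP-optimality $\Leftrightarrow$ normal-cone-inclusion equivalence (and the fact that $\mathcal{N}_C(d)\neq\emptyset\Rightarrow d\in C$) so that both directions of the argument invoke it symmetrically; once that lemma is in hand, the equivalence collapses to a direct substitution argument.
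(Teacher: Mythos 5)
Your proposal is correct and follows essentially the same route as the paper's proof: both invoke the characterization from \cite{guo2024mf} (conditions (A)--(B) with $L^\star=d^\star$ substituted into the cost) and then reduce the claim to the standard equivalence between optimality of $d^\star$ for the linear program over $\{x\mid Ax=b,\,x\geq 0\}$ and the normal-cone inclusion $-c(d^\star)\in\mathcal{N}_{\{x\mid Ax=b,\,x\geq 0\}}(d^\star)$. The additional care you take with the \texttt{Normalize} map on zero-mass states is a fine supplementary detail but not a departure from the paper's argument.
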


We now show that when the original MFG exhibits monotonicity properties, then MF-OMI is a monotone inclusion, namely both $c$ and the normal cone operator $\mathcal{N}_{\{x|Ax=b,x\geq 0\}}$ are monotone. 
Before we proceed, let us first recall the following definitions of monotonicity properties from the generic operator theory \cite{ryu2016primer}.  
%(that motivate the corresponding definitions in the MFG context above). 
%Note that the directions of the inequalities are taken exactly the opposite to account for the fact that rewards are to be maximized. 
\begin{definition}[Monotone operator]
An operator/mapping $G:\mathbb{R}^q\rightarrow\mathbb{R}^q$ ($q\in\mathbb{N}$) is said to be monotone on $\mathcal{X}\subseteq \mathbb{R}^q$ if for any two $x_1,\,x_2\in\mathcal{X}$, $(G(x_1)-G(x_2))^\top(x_1-x_2)\geq 0$. 
%It is said to be strictly monotone if the inequality is strict for any $x_1\neq x_2$, and 
It is said to be $\rho$-strongly monotone on $\mathcal{X}$ if for any two $x_1,\,x_2\in\mathcal{X}$, $(G(x_1)-G(x_2))^\top(x_1-x_2)\geq \rho \|x_1-x_2\|_2^2$ for some $\rho>0$. %Finally, it is said to be $\lambda$-cocoercive if $(G(x_1)-G(x_2))^\top(x_1-x_2)\geq \lambda\|G(x_1)-G(x_2)\|_2^2$. 
\end{definition}
Now we are ready to show the monotonicity of \eqref{mf-omi} under Assumption \ref{outstanding_assumptions_2}. Since $\{x|Ax=b,x\geq 0\}$ is a convex, closed and non-empty set, the normal cone operator $\mathcal{N}_{\{x|Ax=b,x\geq 0\}}$ is always monotone \cite{ryu2016primer}. Now note that by the definition of $c(L)$ in \eqref{kkt_param1} (as a concatenation of flattened vectors of negative rewards), we see that Assumption \ref{outstanding_assumptions_2} holds  if and only if $(c(L_1)-c(L_2))^\top (L_1-L_2)\geq \lambda\|L_1-L_2\|_2^2$ for any $L_1,\,L_2\in(\Delta(\mathcal{S}\times\mathcal{A}))^{T}$, \ie, $c(L)$ is monotone (and $\lambda$-strongly monotone if $\lambda>0$) on $(\Delta(\mathcal{S}\times\mathcal{A}))^{T}$.

Moreover, for any $\epsilon>0$, let us define a perturbed reward $\hat{r}_t^\epsilon(s,a,L_t):=r_t(s,a,L_t)-\epsilon L_t(s,a)$ ($s\in\mathcal{S},a\in\mathcal{A},t\in\mathcal{T},L_t\in\Delta(\mathcal{S}\times\mathcal{A})$), with the expected rewards being $\hat{R}^{\epsilon}=\{\hat{R}_t^{\epsilon}\}_{t\in\mathcal{T}}$. Then $c_{\hat{R}^\epsilon}(L)=c(L)+\epsilon L$, and since 
 %namely the $c_R(L)$ constructed 
%with $\hat{R}=\{\hat{R}_t\}$ being the expectations of the perturbed rewards $\hat{r}=\{\hat{r}_t\}$, since 
\[
(c_{\hat{R}^\epsilon}(L_1)-c_{\hat{R}^\epsilon}(L_2))^\top (L_1-L_2)=(c(L_1)-c(L_2))^\top (L_1-L_2)+\epsilon\|L_1-L_2\|_2^2,
\]
 we have that Assumption \ref{outstanding_assumptions_2} holds if and only if $(c_{\hat{R}^\epsilon}(L_1)-c_{\hat{R}^\epsilon}(L_2))^\top (L_1-L_2)\geq (\lambda+\epsilon)\|L_1-L_2\|_2^2$ for any $L_1,\,L_2\in(\Delta(\mathcal{S}\times\mathcal{A}))^{T}$, \ie, 
$c_{\hat{R}^\epsilon}(L)$ is $(\lambda+\epsilon)$-strongly monotone on  $(\Delta(\mathcal{S}\times\mathcal{A}))^{T}$. 

This implies that one can perturb the reward of any monotone model with a negative linear term to obtain a strongly monotone model. This observation is essential for the design of the algorithms to tackle MFGs that are not strongly monotone.

\subsection{Solving MF-OMI with forward-backward splitting (FBS)}
We now introduce computational algorithms for solving MF-OMI, which also serve as the basis for the learning algorithm in the next section. There are numerous operator splitting algorithms that can be adopted to solve monotone inclusion problems in the form of MF-OMI. Here we choose FBS (Forward Backward Splitting) \cite[\S 7.1]{ryu2016primer}, a simple yet efficient workhorse algorithm that is essentially a generalization of the projected descent algorithm to inclusion problems. %To facilitate the presentation, we abstract both \eqref{mf-omi-i} and \eqref{mf-omi-ii} as finding $x$ such that $0\in G(x)+\mathcal{N}_{\mathcal{C}}(x)$, where $x=u,\,G(u)=F(u),\, \mathcal{C}=\mathbb{R}^m\times\Delta_n$ for \eqref{mf-omi-i}, while $x=d,\,G(d)=c(d),\,\mathcal{C}=\{x|Ax=b,x\geq 0\}$ for \eqref{mf-omi-ii}. 
FBS on such an inclusion problem proceeds as follows: in each iteration $k$,
\begin{equation}\label{FBS-prototype}
d^{k+1}=\texttt{Proj}_{\{x|Ax=b,x\geq0\}}(d^k-\alpha c(d^k)), 
\end{equation}
where $\alpha>0$ is the step-size for which appropriate ranges are specified below. Unfortunately, in general merely the monotonicity of $c$ is insufficient to guarantee convergence of FBS, and strong monotonicity is needed \cite[\S 7.1]{ryu2016primer}. Motivated by the discussion at the end of the previous section, we consider $\eta$-perturbed rewards for some perturbation coefficient $\eta>0$. Expanding \eqref{FBS-prototype} for MF-OMI with such perturbations, we obtain Algorithm \ref{MF-OMI-FBS-Cons}, which solves \eqref{mf-omi} via FBS with projection onto the set of occupation measures/consistency, \ie, $\{x|Ax=b,x\geq 0\}$. Note that the consistency projection step in Algorithm \ref{MF-OMI-FBS-Cons} is a convex quadratic program and can hence be solved efficiently via ADMM (\eg, via popular convex QP solvers such as OSQP \cite{stellato2020osqp}) and Frank-Wolfe \cite{jaggi2013revisiting} (where each linearized sub-problem can be exactly solved as an MDP) based algorithms. %OSQP and Frank-Wolfe. %XXX. Shall we just use $\eta$ and $\alpha$ here instead of with $k$ superscripts?
%the most basic   
%can be solved by numerous operator splitting algorithms

%Describe how to solve MF-OMI with FBS. Projection step handling, quadratic regularization, convergence rate. Also don't forget about the Frank-Wolfe convergence rate. 

\begin{algorithm}[ht]
\caption{\texttt{MF-OMI-FBS}: MF-OMI with Forward-Backward Splitting}
\label{MF-OMI-FBS-Cons}
\begin{algorithmic}[1]
\STATE {\bfseries Input:} initial policy sequence $\pi^0\in\Pi$,
step-size $\alpha>0$, and perturbation coefficient $\eta\geq 0$.
\STATE Compute $d^0=L^{\pi^0}$.
\FOR{$k=0, 1, \dots$}
\STATE Update $\tilde{d}^{k+1}=d^k-\alpha (c(d^k)+\eta d^k)$.
\STATE Compute $d^{k+1}$ as the solution to the convex quadratic program: 
\[
\begin{array}{llll}
\text{minimize} & \|d-\tilde{d}^{k+1}\|_2^2 & \text{subject to}& Ad=b,\,d\geq 0.
\end{array}
\]\vspace{-0.57cm}
%\STATE $d^{k+1/2}=\texttt{Proj}_{\Delta_n}(L^k-\alpha^k (c_{L^k}-A^\top y^k)$.
%Projection oracle (for non-negative cone and simplex) for the backward step.
\ENDFOR
\end{algorithmic}
\end{algorithm}

%\subsection{Convergence analysis of Algorithm \ref{MF-OMI-FBS-Cons}} 
\subsection{Convergence analysis of the \texttt{MF-OMI-FBS} algorithm}
Below we analyze the convergence of Algorithm \ref{MF-OMI-FBS-Cons} in terms of exploitability, which, together with Theorem \ref{n2mfg} leads to convergence rates of \texttt{NashConv} in the original $N$-player game. To account for the perturbation caused by $\eta$ regularization, we first prove the following result stating the Lipschitz continuity of exploitability as well as its robustness to reward perturbation. To facilitate the presentation,  we denote the exploitability of a policy sequence $\pi\in \Pi$ for an MFG with perturbed expected rewards $\hat{R}=\{\hat{R}_t\}_{t\in\mathcal{T}}$ as $\texttt{Expl}(\pi;\hat{R})$ (when everything else including transitions, etc. is fixed to the true model associated with the original $N$-player game). When $\hat{R}=\{\hat{R}_t\}_{t\in\mathcal{T}}=\{R_t\}_{t\in\mathcal{T}}=R$, \ie, the rewards are also fixed to the true model, we still use $\texttt{Expl}(\pi)$ to represent the exploitability.  
%To account for the perturbation caused by $\eta^k$ regularization and the statistical estimation errors to be introduced by the online RL algorithms later, we first prove the following result stating the Lipschitz continuity and perturbation robustness of exploitability.

%XXX. Add dependency on $\hat{P}$, although we don't need perturbation result of exploitability w.r.t. it, we need Lipschitz continuity of exploitability w.r.t. $\pi$ for any $\hat{P}$ later in the online RL analysis? Not really as there we can directly make use of $\texttt{Expl}(\pi^k)$ convergence and so no need to invoke anything directly related to $d^k$? We will see anyway. 

%XXX. Also ``everything else'' should be made more explicit (state-action spaces, etc.) for clarity. 
\begin{lemma}\label{lipschitz_and_reward_perturb_of_mfg}
Suppose that Assumption \ref{outstanding_assumptions_1} holds. Then for any $d^1,d^2\in\{x|Ax=b,x\geq 0\}$ and any policy sequences $\pi^1\in\texttt{Normalize}(d^1)$ and $\pi^2\in\texttt{Normalize}(d^2)$, then we have that for any $\epsilon\geq 0$, %(XXX. Add assumption on $\hat{R}$ is Lipschitz continuous; also unify $\hat{R}$ and $\{\hat{R}_t\}_{t\in\mathcal{T}}$) 
\[
\left|\texttt{Expl}(\pi^1;\hat{R}^\epsilon)-\texttt{Expl}(\pi^2;\hat{R}^\epsilon)\right|\leq (2TC_R+R_{\max}+(2T+1)\epsilon) \|d^1-d^2\|_1,
\]
where $\hat{R}^{\epsilon}=\{\hat{R}_t^{\epsilon}\}_{t\in\mathcal{T}}$ is the expectation of the perturbed reward $\hat{r}_t^\epsilon(s,a,L_t):=r_t(s,a,L_t)-\epsilon L_t(s,a)$ ($s\in\mathcal{S},a\in\mathcal{A},t\in\mathcal{T},L_t\in\Delta(\mathcal{S}\times\mathcal{A})$). 
%For two MFGs differing only in rewards, 

In addition, for any $\epsilon\geq 0$ and any policy sequence $\pi\in\Pi$, we have 
\[
\left|\texttt{Expl}(\pi)-\texttt{Expl}(\pi;\hat{R}^\epsilon)\right|\leq 2T\epsilon.
\]
\end{lemma}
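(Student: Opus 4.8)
The plan is to prove both inequalities in Lemma~\ref{lipschitz_and_reward_perturb_of_mfg} by unfolding the definition of exploitability in \eqref{exploitablity} and controlling each constituent term. Recall that $\texttt{Expl}(\pi;\hat{R}^\epsilon)=\max_{\pi'\in\Pi}V^{\pi'}(L^\pi;\hat{R}^\epsilon)-V^{\pi}(L^\pi;\hat{R}^\epsilon)$, where $V^{\pi'}(L;\hat{R}^\epsilon)$ is the value of policy $\pi'$ in the MDP $\mathcal{M}(L)$ whose stagewise reward at $(s,a)$ is $\hat{R}_t^\epsilon(s,a,L_t)=R_t(s,a,L_t)-\epsilon L_t(s,a)$ and whose transitions are the true $P_t$. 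The key structural fact is that the mean-field flow $L$ enters the value function only through the reward vector $c_{\hat{R}^\epsilon}(L)$ (the transitions are held fixed to $P$), so everything reduces to Lipschitz estimates of linear-functional-type quantities in $L$, and since $\pi^i\in\texttt{Normalize}(d^i)$ we have $L^{\pi^i}=d^i$ by the occupation-measure identity $d_t=L_t^\pi$ noted earlier.

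First I would establish the Lipschitz continuity (the first inequality). I would split the difference $\texttt{Expl}(\pi^1;\hat{R}^\epsilon)-\texttt{Expl}(\pi^2;\hat{R}^\epsilon)$ into (i) the difference of the optimal-value terms $\max_{\pi'}V^{\pi'}(L^{\pi^1};\hat{R}^\epsilon)-\max_{\pi'}V^{\pi'}(L^{\pi^2};\hat{R}^\epsilon)$ and (ii) the difference of the on-policy terms $V^{\pi^2}(L^{\pi^2};\hat{R}^\epsilon)-V^{\pi^1}(L^{\pi^1};\hat{R}^\epsilon)$. For part (i), since $\max$ is $1$-Lipschitz, it suffices to bound $|V^{\pi'}(L^{\pi^1};\hat{R}^\epsilon)-V^{\pi'}(L^{\pi^2};\hat{R}^\epsilon)|$ uniformly over $\pi'$; here the occupation measure $d'$ of $\pi'$ is the same in both MDPs (same transitions $P$), so the value is the inner product $\langle d', R^{(1)}\rangle$ versus $\langle d',R^{(2)}\rangle$ of a fixed probability-normalized occupation measure against the two reward vectors, and the difference is controlled by $\sum_t\|\hat{R}_t^\epsilon(\cdot,\cdot,L_t^{\pi^1})-\hat{R}_t^\epsilon(\cdot,\cdot,L_t^{\pi^2})\|_\infty$. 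Using Assumption~\ref{outstanding_assumptions_1} ($C_R$-Lipschitzness of $R$ in $\ell_1$) and the explicit perturbation term $-\epsilon L_t(s,a)$, this yields a bound proportional to $(C_R+\epsilon)\sum_t\|L_t^{\pi^1}-L_t^{\pi^2}\|_1=(C_R+\epsilon)\|d^1-d^2\|_1$, with a factor $T$ from summing over the horizon. Part (ii) is more delicate because now \emph{both} the reward \emph{and} the evaluated policy/occupation measure change between the two terms. I would handle it by the telescoping decomposition
\[
V^{\pi^2}(L^{\pi^2};\hat{R}^\epsilon)-V^{\pi^1}(L^{\pi^1};\hat{R}^\epsilon)
=\big[V^{\pi^2}(L^{\pi^2};\hat{R}^\epsilon)-V^{\pi^1}(L^{\pi^2};\hat{R}^\epsilon)\big]
+\big[V^{\pi^1}(L^{\pi^2};\hat{R}^\epsilon)-V^{\pi^1}(L^{\pi^1};\hat{R}^\epsilon)\big],
\]
where the second bracket is again a reward-only perturbation handled as in (i), while the first bracket holds the reward vector fixed at $L^{\pi^2}$ and varies only the policy from $\pi^1$ to $\pi^2$; there I would use $V^\pi(L;\hat R^\epsilon)=\langle L^\pi, \hat R^\epsilon(L)\rangle$ together with $L^{\pi^i}=d^i$ to write this difference as $\langle d^2-d^1,\hat{R}^\epsilon(\cdot,\cdot,L^{\pi^2})\rangle$ and bound it using the boundedness $|R|\le R_{\max}$ plus $|\epsilon L_t(s,a)|\le\epsilon$, giving a term $(R_{\max}+\epsilon)\|d^1-d^2\|_1$. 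Collecting the coefficients ($T$ copies of $C_R$ from each of the two reward-perturbation sub-terms, one $R_{\max}$ from the policy-change term, and the matching $\epsilon$-factors) reproduces the constant $2TC_R+R_{\max}+(2T+1)\epsilon$.

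For the second inequality I would fix the policy $\pi$ and compare $\texttt{Expl}(\pi)$ with $\texttt{Expl}(\pi;\hat{R}^\epsilon)$; here the induced flow $L^\pi$ is identical in both, and only the reward changes from $R$ to $\hat R^\epsilon=R-\epsilon L^\pi$. Both the $\max_{\pi'}V^{\pi'}$ term and the on-policy $V^\pi$ term shift by at most $\sum_t\|\epsilon L_t^\pi\|_\infty\le \epsilon$ per time step (since each $L_t^\pi$ is a probability vector, the perturbation $-\epsilon L_t(s,a)$ has magnitude at most $\epsilon$ entrywise and is integrated against a probability-normalized occupation measure), so each of the two terms moves by at most $T\epsilon$, and the triangle inequality gives the total bound $2T\epsilon$. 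The main obstacle I anticipate is the bookkeeping in part (ii) of the first inequality—specifically ensuring the policy-change term is correctly expressed as a linear functional in $d^2-d^1$ despite the fact that the reward vector itself depends on the flow $L^{\pi^2}$; the clean resolution is the telescoping above, which isolates the policy variation under a \emph{fixed} reward so that the identity $V^\pi(L)=\langle L^\pi,\hat R^\epsilon(L)\rangle=\langle d,\hat R^\epsilon(L)\rangle$ applies directly and makes the dependence manifestly linear in the occupation measures.
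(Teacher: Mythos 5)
Your proposal is correct and follows essentially the same route as the paper's proof: both write $V^{\pi'}(L^{\pi};\hat R^\epsilon)=-(c_{\hat R^\epsilon}(L^\pi))^\top L^{\pi'}$, use $L^{\pi^i}=d^i$ (via the consistency lemma), bound the max term by the sup over $\pi'$ of a reward-perturbation inner product, and split the on-policy difference by adding and subtracting a cross term — your telescoping $(c(d^2))^\top(d^1-d^2)+(c(d^1)-c(d^2))^\top d^1$ is just a symmetric regrouping of the paper's $(c(d^1))^\top(d^1-d^2)+(c(d^1)-c(d^2))^\top d^2$, and the constants assemble identically. The second inequality is argued exactly as in the paper.
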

%XXX. Actually need $d^1,d^2$ to be occupation measures, namely $Ad^1=b$ and $Ad^2=b$.  

%XXX. Add a lemma to show that $d$ is an occupation measure iff $Ad=b$ and in such a case, for any $\pi\in\texttt{Normalize}(d)$, $L^\pi=d$. This is used repeatedly both here and in the online RL below. 

In order to prove Lemma \ref{lipschitz_and_reward_perturb_of_mfg}, the following lemma is needed, which demonstrates the dual transformation between policies and occupation measures. The proof can be found in Section \ref{sec:proof-lemma-consistency_recover}.
\begin{lemma}\label{consistency_recover}
Let $\hat{P}=\{\hat{P}_t\}_{t\in\mathcal{T}}$ be an arbitrary transition model, namely $\hat{P}_t(s'|s,a)\geq 0$ and $\sum_{s'\in\mathcal{S}}\hat{P}_t(s'|s,a)=1$. Suppose that $x\in\mathbb{R}^{SAT}$ is such that $A_{\hat{P}}x=b,x\geq 0$. Then for any $\pi\in\texttt{Normalize}(x)$, we have $\Gamma(\pi;\hat{P})=x$. In addition, for any $\pi\in\Pi$, we also have $A_{\hat{P}}\Gamma(\pi;\hat{P})=b$, $\Gamma(\pi;\hat{P})\geq 0$.
%we have $L^\pi=x$. In addition, we also have $AL^\pi=b$, $L^\pi\geq 0$.
\end{lemma}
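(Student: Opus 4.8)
The plan is to prove both assertions by first decoding the matrix equation $A_{\hat P}x=b$ into its concrete block form and then running a short induction over the time index. Writing $x=(x_0,\dots,x_{T-1})$ with $x_t\in\mathbb{R}^{SA}$, the bottom block row $Zx_0=\mu_0$ says that the state marginal $\sum_a x_0(s,a)$ equals $\mu_0(s)$, while each upper block row $W_tx_t-Zx_{t+1}=0$ (for $t=0,\dots,T-2$) says that the one-step pushforward $\sum_{s,a}\hat P_t(s'|s,a)\,x_t(s,a)$ equals the next state marginal $\sum_{a'}x_{t+1}(s',a')$. This reading uses only that $Z$ sums out the action coordinate and that the $s'$-th row of $W_t$ is $\hat P_t(s'|\cdot,\cdot)$. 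Throughout I will abbreviate $\mu_t^x(s):=\sum_a x_t(s,a)$.

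For the second claim (feasibility) I would set $L=\Gamma(\pi;\hat P)$ and argue by induction. Nonnegativity is immediate: $L_0(s,a)=\mu_0(s)\pi_0(a|s)\geq 0$, and if $L_t\geq 0$ then $L_{t+1}(s',a')=\pi_{t+1}(a'|s')\sum_{s,a}\hat P_t(s'|s,a)L_t(s,a)\geq 0$ since every factor is nonnegative. The equality $A_{\hat P}L=b$ reduces to the same two families of constraints identified above, and each follows by summing the defining recursion of $\Gamma$ over actions and using $\sum_a\pi_t(a|s)=1$: summing $L_0$ gives $\mu_0(s)$, and summing $L_{t+1}$ over the last action gives exactly $\sum_{s,a}\hat P_t(s'|s,a)L_t(s,a)$, i.e. $ZL_{t+1}=W_tL_t$.

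The first claim (recovery) is the substantive part, and I would prove $L_t^\pi=x_t$ by induction on $t$, where $L^\pi=\Gamma(\pi;\hat P)$. In both the base case and the inductive step the quantity $L_t^\pi(s,a)$ takes the form $\mu_t^x(s)\,\pi_t(a|s)$: for $t=0$ this is the definition of $\Gamma$ together with $Zx_0=\mu_0$, and for $t+1$ it follows from the induction hypothesis combined with the flow-conservation constraint, which lets me replace $\sum_{s,a}\hat P_t(s'|s,a)x_t(s,a)$ by $\mu_{t+1}^x(s')$. Once $L_t^\pi(s,a)=\mu_t^x(s)\pi_t(a|s)$ is in hand, I split on whether $\mu_t^x(s)>0$: if so, the definition of $\texttt{Normalize}$ gives $\pi_t(a|s)=x_t(s,a)/\mu_t^x(s)$, so the marginal cancels and $L_t^\pi(s,a)=x_t(s,a)$; if $\mu_t^x(s)=0$ then $L_t^\pi(s,a)=0$, and nonnegativity of $x$ forces $x_t(s,a)=0$ for every $a$, so the two sides again agree.

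The main, and essentially only, obstacle is the zero-mass case, where $\texttt{Normalize}$ returns an arbitrary distribution $\pi_t(\cdot|s)$ and one might fear the induction breaks. The point to emphasize is that this arbitrariness is harmless precisely because the corresponding marginal $\mu_t^x(s)$ vanishes, so the undetermined policy values are multiplied by zero both in $\Gamma$ and in the target; nonnegativity of $x$, together with $\mu_t^x$ being a sum of its nonnegative entries, is exactly what upgrades ``the marginal vanishes'' to ``every entry vanishes.'' No quantitative estimates are involved, so the whole argument collapses to a two-step induction once the block equations are decoded.
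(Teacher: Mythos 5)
Your proposal is correct and follows essentially the same route as the paper's proof: decode the block rows of $A_{\hat P}x=b$ into the initial-marginal and flow-conservation constraints, prove feasibility of $\Gamma(\pi;\hat P)$ by summing the recursion over actions, and prove recovery by induction on $t$ using the identity $\pi_t(a|s)\sum_{a'}x_t(s,a')=x_t(s,a)$. The only difference is cosmetic: you make the zero-marginal case split explicit, whereas the paper folds it into that identity (which silently relies on the same nonnegativity argument you spell out).
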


We are now ready to prove Lemma \ref{lipschitz_and_reward_perturb_of_mfg}.
\begin{proof}[Proof of Lemma \ref{lipschitz_and_reward_perturb_of_mfg}]
%The key is to notice that when $Ax=b,x\geq0$, then any $\pi\in\texttt{Normalize}(x)$ will have $L^\pi=x$. 
Firstly, by Lemma \ref{consistency_recover} (applied to $\hat{P}=P$, in which case $\Gamma(\pi;\hat{P})=L^{\pi}$), we have $L^{\pi^1}=d^1$ and $L^{\pi^2}=d^2$. 
%By the definitions and the occupation measure representation of (single-agent) MDP value functions, we have that for any $\pi,\pi'\in\Pi$ (\cf the definition of the exploitability and the (mean-field) expected total reward in \eqref{exploitablity})
Note that
\[
V^{\pi'}(L^{\pi})=\sum_{s\in\mathcal{S},a\in\mathcal{A},t\in\mathcal{T}}\hat{R}^{\epsilon}(s,a,L_t^{\pi})L_t^{\pi'}(s,a)=-(c_{\hat{R}^{\epsilon}}(L^{\pi}))^\top L^{\pi'}.
\]
Therefore for $i=1,2$, 
\[
\texttt{Expl}(\pi^i;\hat{R}^\epsilon)=\max_{\pi'\in\Pi}\left(-(c_{\hat{R}^{\epsilon}}(L^{\pi^i}))^\top L^{\pi'}\right)+(c_{\hat{R}^{\epsilon}}(L^{\pi^i}))^\top L^{\pi^i}=\max_{\pi'\in\Pi}\left(-(c_{\hat{R}^{\epsilon}}(d^i))^\top L^{\pi'}\right)+(c_{\hat{R}^{\epsilon}}(d^i))^\top d^i,
\]
which implies
\begin{align*}
&\left|\texttt{Expl}(\pi^1;\hat{R}^\epsilon)-\texttt{Expl}(\pi^2;\hat{R}^\epsilon)\right|\\
&\leq \max_{\pi'\in\Pi}\left|-\left(c_{\hat{R}^{\epsilon}}(d^1)-c_{\hat{R}^{\epsilon}}(d^2)\right)^\top L^{\pi'}\right|+\left|(c_{\hat{R}^{\epsilon}}(d^1))^\top (d^1-d^2)\right|+\left|(c_{\hat{R}^{\epsilon}}(d^1)-c_{\hat{R}^{\epsilon}}(d^2))^\top d^2\right|\\
&\leq T\left\|c_{\hat{R}^{\epsilon}}(d^1)-c_{\hat{R}^{\epsilon}}(d^2)\right\|_\infty+(R_{\max}+\epsilon)\|d^1-d^2\|_1+T\left\|c_{\hat{R}^{\epsilon}}(d^1)-c_{\hat{R}^{\epsilon}}(d^2)\right\|_\infty\\
&\leq (2TC_R+R_{\max}+(2T+1)\epsilon)\|d^1-d^2\|_1. %(2TC_R+R_{\max}+(2T+1)\epsilon)\|d^1-d^2\|_1
\end{align*}
Finally, we also have 
\begin{align*}
&\left|\texttt{Expl}(\pi)-\texttt{Expl}(\pi;\hat{R}^{\epsilon})\right|\\
&=\left|\max_{\pi'\in\Pi}\left(-(c(L^{\pi}))^\top L^{\pi'}\right)+(c(L^{\pi}))^\top L^{\pi}-\max_{\pi'\in\Pi}\left(-(c_{\hat{R}^{\epsilon}}(L^{\pi}))^\top L^{\pi'}\right)-(c_{\hat{R}^{\epsilon}}(L^{\pi}))^\top L^{\pi}\right|\\
&\leq \max_{\pi'\in\Pi}\left|-\left(c(L^{\pi})-c_{\hat{R}^{\epsilon}}(L^\pi)\right)^\top L^{\pi'}\right|+\left|\left(c(L^{\pi})-c_{\hat{R}^{\epsilon}}(L^{\pi})\right)^\top L^{\pi}\right|\leq 2T\epsilon. %2\|\epsilon L^{\pi}\|_{\infty}=2\epsilon
\end{align*}
This completes the proof.
\end{proof}

%XXX. Change $\mathcal{T}$ to $\{0,\dots,T-1\}$ so we get back from $T+1$ to $T$ in the above bounds?

% |sup f - sup g| <= sup |f -g|

% sup f - sup g <= sup (f-g) <= sup |f-g|

% sup g - sup f <= sup  (g-f) <= sup |f-g|

% XXX Helper lemma. %XXX. Change to $\Gamma(\pi;\hat{P})$ notation for later usage with perturbed $P$ projections. 
% \begin{lemma}\label{consistency_recover}
% Let $\hat{P}=\{\hat{P}_t\}_{t\in\mathcal{T}}$ be an arbitrary transition model, namely $\hat{P}_t(s'|s,a)\geq 0$ and $\sum_{s'\in\mathcal{S}}\hat{P}_t(s'|s,a)=1$. Suppose that $x\in\mathbb{R}^{SA(T+1)}$ is such that $A_{\hat{P}}x=b,x\geq 0$. Then for any $\pi\in\texttt{Normalize}(x)$, we have $\Gamma(\pi;\hat{P})=x$. In addition, we also have $A\Gamma(\pi;\hat{P})=b$, $\Gamma(\pi;\hat{P})\geq 0$.
% %we have $L^\pi=x$. In addition, we also have $AL^\pi=b$, $L^\pi\geq 0$.
% \end{lemma}

%XXX. Add Lipschitz continuity of $c(d)$ and $A$ w.r.t. $R$ and $P$, resp. (used for the statistical estimation error analyses below). Also Lipschitz continuity of $c(d)$ w.r.t. $d$ (used for step-size requirements and convergence of FBS). %. These are used in the subsequent analysis (\eg, Lipschitz continuity of $c(d)$ is used by . 

%XXX. Double check $\epsilon$ can be 0 above, and the correction of $T$ factors are correct, and if we should change to assume $\ell_{\infty}$ Lipschitz continuity in $R$ to simplify the bounds. Also double check all subsequent constants/results are updated accordingly. 

The following lemma is key to the proof of the convergence of Algorithm \ref{MF-OMI-FBS-Cons}, which states the equivalence between the fixed-points of the \texttt{MF-OMI-FBS} iterations and the NEs of the MFG. Let $F_{\alpha,\eta}$ be the  mapping from $d^k$ to $d^{k+1}$ in Algorithm \ref{MF-OMI-FBS-Cons}, namely $F_{\alpha,\eta}(d)=\texttt{Proj}_{\{x|Ax=b,x\geq 0\}}(d-\alpha (c(d)+\eta d))$.  The proof of the following lemma can be found in Section \ref{sec:proof-lemma-MFNE_VS_FP}.
\begin{lemma}\label{MFNE_vs_FP}
For any $\eta\geq 0$, the set of fixed points of $F_{\alpha,\eta}$ is independent of $\alpha>0$. 
Now suppose that $\alpha>0$ and $\eta\geq 0$. Moreover,  
If $\pi\in\Pi$ is an NE of the MFG with the $\eta$-perturbed rewards $\hat{r}_t^\eta(s,a,L_t)=r_t(s,a,L_t)-\eta L_t(s,a)$, then $d=L^{\pi}$ is a fixed point of $F_{\alpha,\eta}$, namely $F_{\alpha,\eta}(d)=d$. Finally, if $d$ is a fixed point of $F_{\alpha,\eta}$, then $Ad=b,d\geq 0$ and any $\pi\in\texttt{Normalize}(d)$ is an NE of the MFG with the $\eta$-perturbed rewards $\hat{r}_t^\eta(s,a,L_t)$, and $d=L^\pi$. 
\end{lemma}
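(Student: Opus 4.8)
The plan is to establish Lemma~\ref{MFNE_vs_FP} in three pieces matching its three assertions, all built on the fixed-point characterization of the projection operator $F_{\alpha,\eta}$ and on the earlier structural results (Theorem~\ref{thm:mf-omi}, Lemma~\ref{consistency_recover}, and the perturbed-reward discussion showing $c_{\hat R^\eta}(L)=c(L)+\eta L$). First I would record the elementary variational characterization of the Euclidean projection onto the closed convex set $C:=\{x\mid Ax=b,\,x\ge 0\}$: a point $d$ satisfies $d=\texttt{Proj}_C(u)$ if and only if $d\in C$ and $(u-d)^\top(w-d)\le 0$ for all $w\in C$, equivalently $u-d\in\mathcal{N}_C(d)$. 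Applying this with $u=d-\alpha(c(d)+\eta d)$, I get that $d$ is a fixed point of $F_{\alpha,\eta}$ if and only if $d\in C$ and $-\alpha(c(d)+\eta d)\in\mathcal{N}_C(d)$, i.e. $0\in \alpha\big(c(d)+\eta d\big)+\mathcal{N}_C(d)$. Since $\alpha>0$ and $\mathcal{N}_C(d)$ is a cone, the factor $\alpha$ is immaterial, so this condition is exactly $0\in c_{\hat R^\eta}(d)+\mathcal{N}_C(d)$, which is \eqref{mf-omi} for the $\eta$-perturbed model. This single equivalence delivers the $\alpha$-independence of the fixed-point set immediately.

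Next I would connect this characterization to Nash equilibria via Theorem~\ref{thm:mf-omi} applied to the $\eta$-perturbed MFG. For the second assertion, suppose $\pi$ is an NE of the $\eta$-perturbed MFG. By the ``only if'' direction of Theorem~\ref{thm:mf-omi}, $L^\pi$ solves \eqref{mf-omi} for the perturbed rewards, and by the displayed equivalence above this says precisely that $d:=L^\pi$ is a fixed point of $F_{\alpha,\eta}$; here I also note $d=L^\pi\in C$ by Lemma~\ref{consistency_recover} (applied with $\hat P=P$, so $\Gamma(\pi;P)=L^\pi$), so $F_{\alpha,\eta}(d)$ is well defined and equals $d$. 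For the third assertion, suppose $d$ is a fixed point of $F_{\alpha,\eta}$. The fixed-point condition forces $d\in C$, giving $Ad=b,\,d\ge 0$; and by the equivalence it also gives $0\in c_{\hat R^\eta}(d)+\mathcal{N}_C(d)$, i.e. $d$ solves \eqref{mf-omi} for the perturbed model. The ``if'' direction of Theorem~\ref{thm:mf-omi} then yields that any $\pi\in\texttt{Normalize}(d)$ is an NE of the $\eta$-perturbed MFG. Finally, to conclude $d=L^\pi$, I invoke Lemma~\ref{consistency_recover} once more: since $Ad=b$, $d\ge0$, any $\pi\in\texttt{Normalize}(d)$ satisfies $\Gamma(\pi;P)=L^\pi=d$.

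The main bookkeeping obstacle I anticipate is the careful alignment of the perturbed reward vector $c_{\hat R^\eta}$ with the forward step of the algorithm. The algorithm's forward update uses $c(d^k)+\eta d^k$, and I must confirm this equals $c_{\hat R^\eta}(d^k)$ exactly (including the sign convention in \eqref{kkt_param1}, where $c$ holds \emph{negative} rewards, so the perturbation $-\eta L_t(s,a)$ in $\hat r_t^\eta$ becomes the $+\eta d$ term in $c_{\hat R^\eta}$); this identity is supplied by the perturbation discussion preceding Algorithm~\ref{MF-OMI-FBS-Cons} and just needs to be cited cleanly. A second, milder subtlety is that Theorem~\ref{thm:mf-omi} as stated applies to a generic monotone-or-not MFG, so I must make sure it is legitimate to instantiate it for the perturbed model $\hat r^\eta$; this is fine because Theorem~\ref{thm:mf-omi} is a structural equivalence that holds for \emph{any} rewards and does not require Assumption~\ref{outstanding_assumptions_2}. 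Beyond these two points the argument is a direct chain of equivalences, so I expect no genuine analytic difficulty—the content is entirely in recognizing that the FBS fixed-point equation \emph{is} the \eqref{mf-omi} inclusion for the perturbed rewards, after which Theorem~\ref{thm:mf-omi} and Lemma~\ref{consistency_recover} close every gap.
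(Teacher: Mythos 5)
Your proposal is correct and follows essentially the same route as the paper's proof: both reduce the fixed-point equation to the inclusion $0\in c(d)+\eta d+\mathcal{N}_{\{x\mid Ax=b,\,x\ge 0\}}(d)$ (you via the variational characterization of projection, the paper via the equivalent resolvent identity $\texttt{Proj}_C=(I+\alpha\mathcal{N}_C)^{-1}$), derive the $\alpha$-independence from the cone structure, and then close both directions with Theorem~\ref{thm:mf-omi} applied to the $\eta$-perturbed model together with Lemma~\ref{consistency_recover}. No gaps.
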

%Note that applying the above equivalence to the  MFG with the perturbed reward. 

We are now ready to state the main convergence result of the proposed \texttt{MF-OMI-FBS} algorithm. 
\begin{theorem}\label{FBS_convergence}
Suppose that Assumptions \ref{outstanding_assumptions_1} and \ref{outstanding_assumptions_2} hold. Then we have the following convergence results. 
\begin{itemize}
    \item When $\lambda=0$ in Assumption \ref{outstanding_assumptions_2}, for any $\epsilon>0$, if we adopt the step-size $\alpha=\epsilon/(2C_R^2S^2A^2+2\epsilon^2)$ and the perturbation coefficient $\eta=\epsilon$, then for any $\pi^k\in\texttt{Normalize}(d^k)$ with $d^k$ from Algorithm \ref{MF-OMI-FBS-Cons}, we have 
\[
\texttt{Expl}(\pi^k)\leq 2T\epsilon+ 2\sqrt{SAT}(2{\color{black}T^2}C_R+R_{\max}{\color{black}T})\left(1-\kappa_{\epsilon}\right)^{\frac{k}{2}}, %\rightarrow 2T\epsilon\text{ as $k\rightarrow\infty$},
\]
where $\kappa_{\epsilon}:=\epsilon^2/\left(2C_R^2S^2A^2+2\epsilon^2\right)\in(0,1)$. 
\item When $\lambda>0$ in Assumption \ref{outstanding_assumptions_2}, if we adopt $\alpha=\lambda/(2C_R^2S^2A^2)$ and $\eta=0$, then for any $\pi^k\in\texttt{Normalize}(d^k)$ with $d^k$ from Algorithm \ref{MF-OMI-FBS-Cons}, we have  
\[
\texttt{Expl}(\pi^k)\leq 2\sqrt{SAT}(2{\color{black}T^2}C_R+R_{\max}{\color{black}T})\left(1-\kappa\right)^{\frac{k}{2}}, %\rightarrow0 \text{ as $k\rightarrow\infty$},
\]
where $\kappa:=\frac{\lambda^2}{2C_R^2S^2A^2}\in(0,1)$.
\end{itemize}
\end{theorem}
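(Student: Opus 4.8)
The plan is to show that the iteration map $F_{\alpha,\eta}$ of Algorithm \ref{MF-OMI-FBS-Cons} is a contraction in $\ell_2$ with an explicit factor, and then to convert the resulting geometric convergence of the iterates $d^k$ to a fixed point into convergence of the exploitability via Lemmas \ref{MFNE_vs_FP} and \ref{lipschitz_and_reward_perturb_of_mfg}. The forward operator driving the iteration is $G_\eta(d):=c(d)+\eta d=c_{\hat{R}^\eta}(d)$. By the discussion preceding Algorithm \ref{MF-OMI-FBS-Cons}, Assumption \ref{outstanding_assumptions_2} makes $G_\eta$ $(\lambda+\eta)$-strongly monotone on $(\Delta(\mathcal{S}\times\mathcal{A}))^T$; and the $C_R$-Lipschitzness of the rewards in Assumption \ref{outstanding_assumptions_1}, translated through $\|x\|_1\le\sqrt{SA}\,\|x\|_2$, gives that $G_\eta$ is $\ell_2$-Lipschitz with constant at most $C_RSA+\eta$, hence $\|G_\eta(d_1)-G_\eta(d_2)\|_2^2\le\bar L^2\|d_1-d_2\|_2^2$ with $\bar L^2:=2C_R^2S^2A^2+2\eta^2$ after applying $(a+b)^2\le 2a^2+2b^2$. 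A structural point I would record first is that the feasible set $\{x\mid Ax=b,\,x\ge 0\}$ consists entirely of occupation measures: summing the flow-conservation rows of $A$ against the bottom block $Zd_0=\mu_0$ forces $\sum_{s,a}d_t(s,a)=1$ for every $t$, so all iterates $d^k$ and the fixed point lie in $(\Delta(\mathcal{S}\times\mathcal{A}))^T$, which is exactly the domain on which the strong monotonicity of $G_\eta$ is available.

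Next I would establish the contraction. Since the $\eta$-perturbed MFG still satisfies Assumption \ref{outstanding_assumptions_1}, an NE exists, and by Lemma \ref{MFNE_vs_FP} it furnishes a fixed point $d^\star$ of $F_{\alpha,\eta}$. Writing the forward step $T_\eta(d)=d-\alpha G_\eta(d)$ so that $F_{\alpha,\eta}=\texttt{Proj}_{\{Ax=b,x\ge 0\}}\circ T_\eta$ and $F_{\alpha,\eta}(d^\star)=d^\star$, I would expand
\[
\|T_\eta(d^k)-T_\eta(d^\star)\|_2^2=\|d^k-d^\star\|_2^2-2\alpha(G_\eta(d^k)-G_\eta(d^\star))^\top(d^k-d^\star)+\alpha^2\|G_\eta(d^k)-G_\eta(d^\star)\|_2^2,
\]
bound the middle term by strong monotonicity ($\mu:=\lambda+\eta$) and the last by the Lipschitz estimate $\bar L^2$, and then use non-expansiveness of the projection to get
\[
\|d^{k+1}-d^\star\|_2^2\le(1-2\alpha\mu+\alpha^2\bar L^2)\,\|d^k-d^\star\|_2^2.
\]
The step-size $\alpha=\mu/\bar L^2$ minimizes the right-hand factor to $1-\mu^2/\bar L^2$, which is precisely $1-\kappa$ when $(\lambda,\eta)=(\lambda,0)$ and $1-\kappa_\epsilon$ when $(\lambda,\eta)=(0,\epsilon)$, matching the two stated step-size choices; iterating yields $\|d^k-d^\star\|_2\le(1-\kappa)^{k/2}\|d^0-d^\star\|_2$ (resp. with $\kappa_\epsilon$).

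Finally I would bound the initialization and pass to exploitability. As $d^0,d^\star\in(\Delta(\mathcal{S}\times\mathcal{A}))^T$, we have $\|d^0-d^\star\|_2\le\|d^0-d^\star\|_1\le 2T$, so $\|d^k-d^\star\|_1\le\sqrt{SAT}\,\|d^k-d^\star\|_2\le 2T\sqrt{SAT}\,(1-\kappa)^{k/2}$. By Lemma \ref{MFNE_vs_FP} any $\pi^\star\in\texttt{Normalize}(d^\star)$ is an NE of the $\eta$-perturbed MFG, whence $\texttt{Expl}(\pi^\star;\hat R^\eta)=0$; the Lipschitz part of Lemma \ref{lipschitz_and_reward_perturb_of_mfg} then gives $\texttt{Expl}(\pi^k;\hat R^\eta)\le(2TC_R+R_{\max}+(2T+1)\eta)\|d^k-d^\star\|_1$, and its reward-perturbation part gives $\texttt{Expl}(\pi^k)\le\texttt{Expl}(\pi^k;\hat R^\eta)+2T\eta$. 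For $\lambda>0$ I take $\eta=0$, so the additive bias vanishes and the prefactor collapses to $2\sqrt{SAT}(2T^2C_R+R_{\max}T)$; for $\lambda=0$ I take $\eta=\epsilon$, where the $2T\eta=2T\epsilon$ term supplies the irreducible bias and the remainder carries the decaying factor $(1-\kappa_\epsilon)^{k/2}$.

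The main obstacle I anticipate is the contraction step: applying the strong-monotonicity inequality requires confirming that every iterate and the fixed point genuinely lie in $(\Delta(\mathcal{S}\times\mathcal{A}))^T$ (otherwise Assumption \ref{outstanding_assumptions_2} gives no control on the cross term), and pairing this with the deliberately crude Lipschitz bound $\bar L^2=2C_R^2S^2A^2+2\eta^2$ so that the optimal step-size reproduces the exact constants $\kappa,\kappa_\epsilon$. Everything downstream—the $\ell_1$–$\ell_2$ conversions, the initialization bound, and the two invocations of Lemma \ref{lipschitz_and_reward_perturb_of_mfg}—is routine once the contraction factor and the fixed point are in hand.
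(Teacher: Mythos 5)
Your proposal is correct and follows essentially the same route as the paper: the same contraction estimate for $F_{\alpha,\eta}$ via non-expansiveness of the projection, the $(\lambda+\eta)$-strong monotonicity of $c_{\hat R^\eta}$, and the crude Lipschitz bound $2C_R^2S^2A^2+2\eta^2$, followed by the same conversion to exploitability through Lemmas \ref{MFNE_vs_FP} and \ref{lipschitz_and_reward_perturb_of_mfg}. Two small points: in the $\lambda=0$ case you apply the Lipschitz part of Lemma \ref{lipschitz_and_reward_perturb_of_mfg} at the perturbed reward $\hat R^\eta$, which yields the prefactor $2TC_R+R_{\max}+(2T+1)\epsilon$ and hence a slightly weaker constant than the stated $2\sqrt{SAT}(2T^2C_R+R_{\max}T)$; the paper instead compares $\texttt{Expl}(\pi^k)$ and $\texttt{Expl}(\pi^\star)$ under the \emph{unperturbed} reward first and only then pays the $2T\eta$ perturbation cost, which recovers the exact constant. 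You also omit the short argument (combining strong monotonicity with the Lipschitz bound to get $\lambda\le C_RSA$) that guarantees $\kappa,\kappa_\epsilon\in(0,1)$.
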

\begin{proof}
%XXX. Unify into $\lambda+\eta$ notation (so $\epsilon$ becomes $\eta$ and monotonicity propery becomes $\lambda+\eta$). Then just state the final results with $\lambda=0$ or $\eta=0$ special cases. Btw shall we mention contractivity and convergence more clearly? 
We prove the convergence result for generic $\lambda$ and $\eta$ with $\lambda+\eta>0$ and $\lambda\neq \eta$, and then specialize it to $\lambda=0,\,\eta>0$ and $\lambda>0,\,\eta=0$, resp. to derive the claimed conclusions.

Firstly, by Assumption \ref{outstanding_assumptions_1}, we have that for any $d^1,d^2\in(\Delta(\mathcal{S}\times\mathcal{A}))^T$, 
\begin{align}\label{lipschitz_cd}
\|c(d^1)-c(d^2)\|_2&=\sqrt{\sum_{s\in\mathcal{S},a\in\mathcal{A},t\in\mathcal{T}}(R_t(s,a,d_t^1)-R_t(s,a,d_t^2))^2} \notag\\
&\leq \sqrt{\sum_{s\in\mathcal{S},a\in\mathcal{A},t\in\mathcal{T}}C_R^2\|d_t^1-d_t^2\|_1^2}= C_R\sqrt{SA}\sqrt{\sum_{t\in\mathcal{T}}\|d_t^1-d_t^2\|_1^2} \\
&\leq C_R\sqrt{SA}\sqrt{\sum_{t\in\mathcal{T}}SA\|d_t^1-d_t^2\|_2^2}\leq C_RSA\|d^1-d^2\|_2. \notag
\end{align}
 %   XXX. Lipschitz continuity of $c(d)$ w.r.t. $d$ (used for step-size requirements and convergence of FBS). 

%{\color{red} XXX. SA order wrong: $3/2\rightarrow 1$}

% Now let $d^{\star,\alpha,\epsilon}$ be a fixed-point of $F_{\alpha,\epsilon}$, which exists by Lemma \ref{MFNE_vs_FP} and the existence of mean-field NE.\footnote{It actually does not depend on $\alpha$, but for clarity we still include $\alpha$ in the superscripts.} 
Now let $d^\star$ be a fixed-point of $F_{\alpha,\eta}$\footnote{Note that $d^\star$ in general varies as $\eta$ changes, but here we are considering a fixed $\eta$ and so we choose not to make the dependency explicit in the notation to keep it simple.}, which exists by Lemma \ref{MFNE_vs_FP} and the existence of mean-field NE (as guaranteed by Assumption \ref{outstanding_assumptions_1}). 
Then we have that for any $d\in (\Delta(\mathcal{S}\times\mathcal{A}))^{T}$
\begin{align}\label{F_contraction}
\left\|F_{\alpha,\eta}(d)-F_{\alpha,\eta}(d^\star)\right\|_2^2&\overset{(a)}{\leq} \left\|d-d^\star-\alpha (c(d)-c(d^\star)+\eta(d-d^\star))\right\|_2^2\notag\\
&=\|d-d^\star\|_2^2+\alpha^2\|c(d)-c(d^\star)+\eta(d-d^\star))\|_2^2\notag\\
&\qquad-2\alpha\left(c(d)-c(d^\star)+\eta\left(d-d^\star)\right)\right)^\top (d-d^\star)\\
&\overset{(b)}{\leq} \left(1+\alpha^2\left(2C_R^2S^2A^2+2\eta^2\right)\right)\|d-d^\star\|_2^2-2\alpha(\lambda+\eta)\|d-d^\star\|_2^2\notag\\
&=\left(1-2\alpha(\lambda+\eta)+2\alpha^2\left(C_R^2S^2A^2+\eta^2\right)\right)\|d-d^\star\|_2^2,\notag
\end{align}
where $(a)$ is by the non-expansiveness of projections onto closed convex sets, and $(b)$ is by \eqref{lipschitz_cd} and Assumption \ref{outstanding_assumptions_2}. Note that \eqref{F_contraction} indeed holds when $d^\star$ is an arbitrary mean-field flow in $(\Delta(\mathcal{S}\times\mathcal{A}))^{T}$.  %Here $\tilde{C}_R=XXX$. %XXX. No need to plug in $d^\star$ at the beginning. 

Hence for $\alpha=(\lambda+\eta)/(2C_R^2S^2A^2+2\eta^2)$, we have that \footnote{Note that this also implies that for such an $\alpha$ and when $\lambda+\eta>0$ and $\lambda\neq \eta$, $F_{\alpha,\eta}$ is a contraction mapping on $(\Delta(\mathcal{S}\times\mathcal{A}))^T$ and hence the fixed point $d^\star$ is unique.} \begin{equation}\label{F_alpha_eta_contractive}
\|F_{\alpha,\eta}(d)-F_{\alpha,\eta}(d^\star)\|_2\leq \sqrt{(1-(\lambda+\eta)^2/(2C_R^2S^2A^2+2\eta^2))}\|d-d^\star\|_2.
\end{equation}
Note that for any $d^1,d^2\in(\Delta(\mathcal{S}\times\mathcal{A}))^T$, 
\[
\lambda\|d^1-d^2\|_2^2\leq (c(d^1)-c(d^2))^\top (d^1-d^2)\leq \|c(d^1)-c(d^2)\|_2\|d^1-d^2\|_2\leq C_RSA\|d^1-d^2\|_2^2,
\]
which implies that $\lambda\leq C_RSA$. In addition, $\lambda\neq \eta$ implies 
\[
(\lambda+\eta)^2<2\lambda^2+2\eta^2\leq 2C_R^2S^2A^2+2\eta^2.
\]
Therefore
$0<(\lambda+\eta)^2/(2C_R^2S^2A^2+2\eta^2)<1$ whenever $\lambda\neq \eta$.

Hence we have %immediately implies that 
\begin{align*}
\|d^k-d^\star\|_2&=\|F_{\alpha,\eta}^{(k)}(d^0)-F_{\alpha,\eta}^{(k)}(d^\star)\|_2\\
&\leq \left(1-(\lambda+\eta)^2/\left(2C_R^2S^2A^2+2\eta^2\right)\right)^{\frac{k}{2}}\|d^0-d^\star\|_2\leq 2{\color{black}T}\left(1-(\lambda+\eta)^2/\left(2C_R^2S^2A^2+2\eta^2\right)\right)^{\frac{k}{2}},
\end{align*}
where $F_{\alpha,\eta}^{(k)}:=\overbrace{F_{\alpha,\eta}\circ F_{\alpha,\eta}\circ \dots\circ F_{\alpha,\eta}}^{\text{$k$ times}}(d)$ denotes the self composition of $F_{\alpha,\eta}$ by $k$ times, and we make use of the fact that $F_{\alpha,\eta}(d^\star)=d^\star$.  {\color{black}Note that here we made use of the simple fact that 
\[
\|d^0-d^\star\|_2\leq \|d^0-d^\star\|_1=\sum\nolimits_{t\in\mathcal{T}}\|d^0_t-d^\star_t\|_1\leq \sum\nolimits_{t\in\mathcal{T}}(\|d^0_t\|_1+\|d^\star_t\|_1)= 2T.
\]
}

Finally, for any $\pi^k\in\texttt{Normalize}(d^k)$, let $\pi^\star\in\texttt{Normalize}(d^\star)$, then by Lemma \ref{MFNE_vs_FP} we have $\texttt{Expl}(\pi^\star;\hat{R}^\eta)=0$. Therefore Lemma \ref{lipschitz_and_reward_perturb_of_mfg} implies
\begin{align*}
\texttt{Expl}(\pi^k)&\leq |\texttt{Expl}(\pi^k)-\texttt{Expl}(\pi^\star)| +|\texttt{Expl}(\pi^\star)-\texttt{Expl}(\pi^\star;\hat{R}^\eta)|\\
&\leq (2TC_R+R_{\max})\|d^k-d^\star\|_1+2T\eta\\
&\leq 2T\eta+ 2\sqrt{SAT}(2{\color{black}T^2}C_R+R_{\max}{\color{black}{T}})\left(1-(\lambda+\eta)^2/\left(2C_R^2S^2A^2+2\eta^2\right)\right)^{\frac{k}{2}}.
\end{align*}
%XXX. Change $\hat{r}$ to $\hat{r}_\eta$ to make the notation easier? Change to use $\hat{R}=\{R_t\}_{t\in\mathcal{T}}$ everywhere. Change to use $d^{\star,\eta}$ to denote the dependency on $\eta$ (double check no dependency on $\alpha$). 
Finally, the proof is complete by taking $\eta=\epsilon>0$ when $\lambda=0$ and $\eta=0$ when $\lambda>0$.
\end{proof}
%XXX. Consider simplifying the contraction coefficient to make it easier for later proofs/analyses? Assume $\epsilon\in[0,1]$, and simplify? %Or just use $\kappa$ as below. %$\sqrt{1-x^2}\leq \sqrt{2}(1-x)$

%XXX. Shall we mention that $d^\star$ is unique?

%XXX. Can be slightly tighter without $\epsilon$ in $2TC_R+R_{\max}+\epsilon$ if we insert $\texttt{Expl}(\pi^\star)$ instead. 

%XXX. Check if we actually don't need the perturbed Lipschitz continuity in $d$ in Lemma \ref{lipschitz_and_reward_perturb_of_mfg} but only the $\epsilon=0$ one? But still no harm. 

The following corollary shows the iteration complexity of Algorithm \ref{MF-OMI-FBS-Cons} for achieving a target tolerance of exploitability in a more explicit manner.  Particularly, we will see that to achieve an $\epsilon$ exploitability, (non-strong) Lasry-Lions monotonicity has a polynomial iteration complexity of $O(\epsilon^{-2}\log(1/\epsilon))$ and requires taking the target tolerance $\epsilon$ as an input for the algorithm parameter choices, while strong Lasry-Lions monotonicity leads to a logarithmic iteration complexity of $O(\log(1/\epsilon))$ without needing to specify an input target tolerance. The proof can be found in Section \ref{sec:proof-coro-FBS_iteration_complexity}. %for , whereas  % tolerance of $\epsilon$  %of Algorithm \ref{MF-OMI-FBS-Cons}.
\begin{corollary}\label{FBS_iteration_complexity}
Suppose that Assumptions \ref{outstanding_assumptions_1} and \ref{outstanding_assumptions_2} hold. Then we have the following iteration complexities. 
\begin{itemize}
    \item When $\lambda=0$ in Assumption \ref{outstanding_assumptions_2}, for any target tolerance $\epsilon>0$, if we adopt $\alpha=\epsilon/(8C_R^2S^2A^2T+\epsilon^2/(2T))$ and  $\eta=\epsilon/(4T)$, then for any $k=\Omega(\epsilon^{-2}\log(1/\epsilon))$,\footnote{Here the big-$\Omega$ notation hides problem-dependent constants $S,A,T,C_R,R_{\max}$ and is in the sense of $\epsilon\rightarrow0$.} we have $\texttt{Expl}(\pi^k)\leq \epsilon$ for any $\pi^k\in\texttt{Normalize}(d^k)$ with $d^k$ from Algorithm \ref{MF-OMI-FBS-Cons}. 
    \item When $\lambda>0$ in Assumption \ref{outstanding_assumptions_2}, if we adopt $\alpha=\frac{\lambda}{2C_R^2S^2A^2}$ and $\eta=0$, then for any $k=\Omega(\log(1/\epsilon))$, we have $\texttt{Expl}(\pi^k)\leq \epsilon$ for any $\pi^k\in\texttt{Normalize}(d^k)$ with $d^k$ from Algorithm \ref{MF-OMI-FBS-Cons}.
    \end{itemize}
\end{corollary}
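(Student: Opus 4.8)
The plan is to derive both complexity bounds directly from Theorem \ref{FBS_convergence} by an appropriate (re)scaling of the tolerance parameter, followed by elementary manipulation of the geometric decay factor. No new machinery is needed; the content is essentially bookkeeping that converts the per-iteration error bounds of the theorem into explicit iteration counts.

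For the strongly monotone case ($\lambda>0$), the parameters $\alpha=\lambda/(2C_R^2S^2A^2)$ and $\eta=0$ in the corollary coincide exactly with those in Theorem \ref{FBS_convergence}, so I would simply invoke the bound $\texttt{Expl}(\pi^k)\le 2\sqrt{SAT}(2T^2C_R+R_{\max}T)(1-\kappa)^{k/2}$ with the fixed constant $\kappa=\lambda^2/(2C_R^2S^2A^2)$. Using $(1-\kappa)^{k/2}\le \exp(-\kappa k/2)$ and requiring the right-hand side to be at most $\epsilon$, I would solve for $k$ to obtain $k\ge (2/\kappa)\log\!\big(2\sqrt{SAT}(2T^2C_R+R_{\max}T)/\epsilon\big)$. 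Since $\kappa$ is independent of $\epsilon$, this is $\Omega(\log(1/\epsilon))$, as claimed.

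For the non-strongly monotone case ($\lambda=0$), the key observation I would establish first is that the corollary's parameters are precisely those of Theorem \ref{FBS_convergence} evaluated at the rescaled tolerance $\epsilon':=\epsilon/(4T)$: substituting $\epsilon'$ into $\alpha=\epsilon'/(2C_R^2S^2A^2+2(\epsilon')^2)$ and clearing the common factor $4T$ recovers $\alpha=\epsilon/(8C_R^2S^2A^2T+\epsilon^2/(2T))$, while $\eta=\epsilon'=\epsilon/(4T)$. Theorem \ref{FBS_convergence} then yields $\texttt{Expl}(\pi^k)\le 2T\epsilon'+2\sqrt{SAT}(2T^2C_R+R_{\max}T)(1-\kappa_{\epsilon'})^{k/2}$, and the choice $\epsilon'=\epsilon/(4T)$ makes the first term exactly $\epsilon/2$. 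It remains only to drive the geometric term below $\epsilon/2$, again via $(1-\kappa_{\epsilon'})^{k/2}\le\exp(-\kappa_{\epsilon'}k/2)$ and solving for $k$, giving the threshold $k\ge (2/\kappa_{\epsilon'})\log\!\big(4\sqrt{SAT}(2T^2C_R+R_{\max}T)/\epsilon\big)$.

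The only step requiring genuine care—and the main obstacle, such as it is—is tracking the asymptotic order of the contraction gap $\kappa_{\epsilon'}=(\epsilon')^2/(2C_R^2S^2A^2+2(\epsilon')^2)$ as $\epsilon\to 0$. Since $\epsilon'=\Theta(\epsilon)$ and the denominator stays bounded away from zero for small $\epsilon$, we have $\kappa_{\epsilon'}=\Theta(\epsilon^2)$, hence $2/\kappa_{\epsilon'}=\Theta(\epsilon^{-2})$. Combining this with the $\Theta(\log(1/\epsilon))$ logarithmic factor—treating $S,A,T,C_R,R_{\max}$ as problem-dependent constants absorbed into the big-$\Omega$—yields $k=\Omega(\epsilon^{-2}\log(1/\epsilon))$. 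Everything else is routine algebra, so I expect no conceptual difficulty beyond this order-of-magnitude bookkeeping.
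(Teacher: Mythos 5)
Your proposal is correct and follows essentially the same route as the paper's proof: both cases reduce to Theorem \ref{FBS_convergence} (the $\lambda=0$ case via the tolerance rescaling $\epsilon\mapsto\epsilon/(4T)$, which makes the bias term exactly $\epsilon/2$), and the iteration count is extracted from the geometric factor via the same bound $\log(1-\kappa)\leq-\kappa$ (equivalently $(1-\kappa)^{k/2}\leq e^{-\kappa k/2}$), with $\kappa_{\epsilon/(4T)}=\Theta(\epsilon^2)$ giving the $\Omega(\epsilon^{-2}\log(1/\epsilon))$ rate. No gaps.
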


% {\color{blue}In Appendix \cite{XXX}, we provide implementation detail of MF-OMI and its empirical evaluation against existing baseline algorithms in the literature for computing NEs of MFGs. XXX. Turned out the numerical performance of MF-OMI outperforms all existing baseline algorithms in most cases and is at least competitive to the best in the worst case.
% } 

\section{\texttt{MF-OML}: Online mean-field RL for Nash equilibria}\label{sec:mf-oml}
In this section, we consider the episodic online reinforcement learning setup where the $N$ agents interact with each other and the environment repeatedly over episodes, without knowing the model. Each episode consists of the $T$-step $N$-player game defined at the beginning of Section \ref{setup_section}. 
We propose \texttt{MF-OML} ({\bf M}ean-{\bf F}ield {\bf O}ccupation-{\bf M}easure {\bf L}earning), an online mean-field RL algorithm for finding approximate NEs of symmetric $N$-player games based on the \eqref{mf-omi} formulation and the \texttt{MF-OMI-FBS} algorithm proposed in the previous section.

%\paragraph{Episodic interactions and Nash regret.} %We consider the episodic online reinforcement learning setup where the $N$ agents interact with each other and the environment repeatedly over episodes, in which each episode consists of the $T$-step $N$-player game defined at the beginning of \S \ref{setup_section}. 
\paragraph{Nash regret.} 
To measure the performance of the online RL algorithm producing a sequence of strategy profiles $\boldsymbol{\pi}^m\in\Pi$ for each episode $m\geq 0$, we define a quantity $\texttt{NashRegret}(M)$ to characterize the cumulative deviation from Nash equilibrium of the algorithm up to episode $M-1$ ($M\geq 1$), which is formally defined as the following: %follows:
\begin{equation}\label{nashregret}
\texttt{NashRegret}(M):=\sum_{m=0}^{M-1}\texttt{NashConv}(\boldsymbol{\pi}^m).
\end{equation}

\paragraph{Overview of \texttt{MF-OML}.} The design of \texttt{MF-OML} follows three steps. In the first step, in Section \ref{unreachable_modification}, we show that we can replace rewards $R$ and transitions $P$ at unreachable states (to be formally defined below) with arbitrarily specified default rewards (\eg, zero) and transitions (\eg, self-only transition), without changing the value of $\texttt{NashConv}$ at any strategy profile $\boldsymbol{\pi}$. This allows us to equivalently consider learning and solving the modified $N$-player game. This modification makes the model identifiable and thus is important in the learning procedure.  %as it is equivalent to the original problem in terms of Nash equilibrium solutions and performance metrics, which are the only things we care about. 
We refer to the modified rewards and  transitions as $\tilde{R}$ and $\tilde{P}$ for later reference. %XXX. May need to change to \texttt{Expl}

In the second step, we extend %Algorithm \ref{MF-OMI-FBS-Cons}, 
\texttt{MF-OMI-FBS} to \texttt{MF-OMI-FBS-Approx} in Section \ref{conv_with_approx} to allow for inexact estimations of $c_{\tilde{R}}(d^k)$ and $\tilde{P}$ in each iteration $k\geq 0$. We provide the convergence analysis of this algorithm for any given estimation errors. Such an algorithm serves as a foundation for the final RL algorithm, where we provide concrete approximation procedures with statistical estimation guarantees. 
%Here we use $k$ to denote the iteration (instead of $m$ as is used at the beginning of this section) as 
% Similar to Algorithm \ref{MF-OMI-FBS-Cons}, \texttt{FBS-Cons} in the exact setup, the inexact \texttt{FBS-Cons-Approx} algorithm also requires a target tolerance as the input. Hence to achieve vanishing regret (up to the $O(1/\sqrt{N})$ mean-field approximation error, \cf Theorem \ref{n2mfg}), we would need to stitch together multiple phases of \texttt{FBS-Cons-Approx} runs, where each phase $b$ sets a target tolerance $\epsilon^b$. As a result, we reserve the original episode index $m$ to denote the flattened indexing of $(b,k)$, namely the $k$-th (inner) episode in the $b$-th phase. 

Then in the third step, we design an exploration scheme (\cf Algorithm \ref{alg:SampleExplore}) in Section \ref{explore_and_est} for the agents to learn the rewards and the transitions in each iteration $k\geq0$. % of \texttt{MF-OMI-FBS-Approx}, namely to obtain estimates $\hat{c}^k$ and $\hat{P}^k$ in Algorithm \ref{MF-OMI-FBS-Cons-Approx} introduced in the second step above (\cf \S\ref{explore_and_est}). 
To this end, in each iteration $k$ of \texttt{MF-OMI-FBS-Approx}, we conduct $n_k$ rounds of sample collections, each time picking one agent uniformly at random for pure exploration, while having the remaining $N-1$ agents executing the policy $\pi^k\in\texttt{Normalize}(d^k)$ induced from the current iteration. We establish the high probability bounds for the estimation errors of this exploration scheme. 

The final \texttt{MF-OML} algorithm, summarized in Algorithm \ref{MF-OML}, combines \texttt{MF-OMI-FBS-Approx} with the exploration and estimation procedures in the third step. We combine all the analyses in previous discussions and establish the regret bound for \texttt{MF-OML} in Section \ref{sec:regret}. %, where each phase $l$  
%XXX. The resulting algorithm is summarized in Algorithm \ref{MF-OML}.  XXX. This is too early. Algorithm \ref{MF-OML} should be introduced after specifying all the key steps of estimations and sampling. 
%XXX. Introduce Algorithm \ref{MF-OMI-FBS-Cons-Approx} here and discuss iteration vs. episodes. 

To facilitate the presentation below, we use $\texttt{NashConv}(\boldsymbol{\pi};\tilde{R},\tilde{P})$ to denote the $\texttt{NashConv}$ of strategy profile $\boldsymbol{\pi}$ under the $N$-player game with modified rewards $\tilde{R}$ and transitions $\tilde{P}$. Similarly, we use $\texttt{Expl}(\pi;\tilde{R},\tilde{P})$ to denote the $\texttt{Expl}$ of policy $\pi$ under the MFG with modified rewards $\tilde{R}$ and transitions $\tilde{P}$. %to emphasize the dependency on the transitions and rewards. Accordingly, the 

%XXX. discuss iteration vs. episodes and phases and then 

%XXX. Introduce the two regret definitions in conv reinforce. Discuss (iteration vs.) episodes and phases. Just follow conv reinforce \S 2.3. 

%Probably no need for the two regret notations given the current way of stating the phase and overall algorithms separately. 
%XXX. Add pointers to the refactored sub-algorithms to the overview above. Also btw do we really need the notation for (both) the modified rewards/transitions NashConv and Expl above? At least no harm for convenience and clarity?

\subsection{Uniform exploration and default modifications at unreachable states}\label{unreachable_modification}
%XXX. Key lemma of reachability modification. Is this a stronger but much simpler result for reward-free RL as well? Much simpler algorithm, no need for pre-specified target accuracy (so more suitable for online setting)? If this turns out not really working, then we can consider the contrary, namely to benefit from Rmax/E3/reward-free RL existing works? Actually the difference is mainly whether to depend on $1/p_{\min}$, like min of nonzero entries of occupation measure (or transitions). So in a future work may adopt reward-free RL to improve on this (target accuracy is no harm anyway, as we have phased schedule, etc. and so just set them as the target accuracies should work)?
We begin by showing that modifying rewards and transitions at states that are unreachable under the uniform exploration policy sequence with default values will lead to equivalent games in the sense of $\texttt{NashConv}$. We first make the following definitions. 
%\paragraph{Uniform exploration policy sequence and unreachable states.} 
\begin{definition}
    The uniform/pure exploration policy sequence $\pi^{\texttt{exp}}\in\Pi$ is defined such that $\pi_t^{\texttt{exp}}(s,a)=1/A$ ($s\in\mathcal{S}$, $a\in\mathcal{A}$, $t\in\mathcal{T}$). The unreachable states set $\mathcal{S}_t^{\texttt{unreach}}\subset\mathcal{S}$ is defined as  the subset of states that are not reachable at time step $t\in\mathcal{T}$ under the uniform policy $\pi^{\texttt{exp}}$, namely $\mathcal{S}_t^{\texttt{unreach}}=\{s\in\mathcal{S}|\mathbb{P}^{\pi^{\texttt{exp}}}(s_t=s)=0\}$, where $\mathbb{P}^{\pi}$ denotes the agent state-action trajectory probability induced by following policy sequence $\pi$. 
\end{definition}
Now we are ready to state the equivalence lemma for the modified models. 
%Let $\mathcal{S}_t^{\texttt{unreach}}\subset\mathcal{S}$ be a subset of states that are not reachable at time step $t\in\mathcal{T}$ under the uniform policy $\pi^{\texttt{exp}}$ with $\pi_t^{\texttt{exp}}(s,a)=\texttt{Unif}(A)$, namely $\mathcal{S}_t^{\texttt{unreach}}=\{s\in\mathcal{S}|\mathbb{P}^{\pi^{\texttt{exp}}}(s_t=s)=0\}$, where $\mathbb{P}^{\pi}$ denotes the agent state-action trajectory probability induced by following policy sequence $\pi$.
\begin{lemma}\label{Pmod_equivalence} 
Define a modified transition model $\tilde{P}$ such that for any $a\in\mathcal{A}$,  $\tilde{P}_t(\cdot|s,a):=P_t(\cdot|s,a)$ when $s\notin \mathcal{S}_t^{\texttt{unreach}}$, while $\tilde{P}_t(\cdot|s,a)$ is set to $p_0(\cdot)$, an arbitrary probability vector in $\Delta(\mathcal{S})$ when $s\in\mathcal{S}_t^{\texttt{unreach}}$. Also define a modified reward $\tilde{R}_t(s,a,L_t)$ which is $\tilde{R}_t(s,a,L_t)=R_t(s,a,L_t)$ for $s\notin\mathcal{S}_t^{\texttt{unreach}}$ and  $\tilde{R}_t(s,a,L_t)=0$ otherwise. Then for any $\pi\in\Pi$, we have $\Gamma(\pi;P)=\Gamma(\pi;\tilde{P})$ and $\texttt{Expl}(\pi)=\texttt{Expl}(\pi;R,P)=\texttt{Expl}(\pi;\tilde{R},\tilde{P})$.  %the two $N$-player games are equivalent in the sense that the 
In addition, for any strategy profile $\boldsymbol{\pi}$, we have $\texttt{NashConv}(\boldsymbol{\pi})=\texttt{NashConv}(\boldsymbol{\pi};R,P)=\texttt{NashConv}(\boldsymbol{\pi};\tilde{R},\tilde{P})$. %and $\texttt{Expl}(\pi)=\texttt{Expl}(\pi;R,P)=\texttt{Expl}(\pi;\tilde{R},\tilde{P})$. %for any $\pi\in\Pi$. %\texttt{NashConv} of the same policy are always equal.%Then the two MFGs are equivalent in the sense that the sets of mean-field NEs are the same.  
\end{lemma}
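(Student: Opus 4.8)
The plan is to establish the three equalities in sequence, starting from the structural claim $\Gamma(\pi;P)=\Gamma(\pi;\tilde{P})$, since both exploitability and $\texttt{NashConv}$ equalities will follow once we control how the induced mean-field flows and value functions behave under the modification. First I would prove $\Gamma(\pi;P)=\Gamma(\pi;\tilde{P})$ by induction on $t$ using the recursion \eqref{Gamma_pi}. The key observation is that $\tilde{P}_t$ and $P_t$ differ only on unreachable states $s\in\mathcal{S}_t^{\texttt{unreach}}$, and for such states $\mathbb{P}^{\pi^{\texttt{exp}}}(s_t=s)=0$. Since the uniform policy $\pi^{\texttt{exp}}$ puts positive mass $1/A$ on every action, a state unreachable under $\pi^{\texttt{exp}}$ is unreachable under \emph{any} policy $\pi$ (the support of the reachable-state distribution under $\pi^{\texttt{exp}}$ dominates that under any other policy). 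Thus $L_t^\pi(s,a)=0=L_t^{\pi,\tilde{P}}(s,a)$ for all $s\in\mathcal{S}_t^{\texttt{unreach}}$, and the recursion terms involving $P_t(s'|s,a)L_t^\pi(s,a)$ versus $\tilde{P}_t(s'|s,a)L_t^\pi(s,a)$ agree because the only coordinates where the transition kernels disagree carry zero occupation mass. I would make this precise by showing inductively that both flows coincide and both vanish on unreachable states at each $t$.

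Next I would handle $\texttt{Expl}(\pi)=\texttt{Expl}(\pi;\tilde{R},\tilde{P})$. Recall from \eqref{exploitablity} that $\texttt{Expl}(\pi)=\max_{\pi'\in\Pi}V^{\pi'}(L^\pi)-V^\pi(L^\pi)$, where the value is computed on the $L^\pi$-induced MDP. Having established $L^\pi$ is the same under both models, it remains to show that the optimization $\max_{\pi'}V^{\pi'}(L^\pi)$ and the value $V^\pi(L^\pi)$ are unaffected by the modification of $\tilde{R}$ and $\tilde{P}$ at unreachable states. For $V^\pi(L^\pi)$, I would write it as $\sum_{t,s,a}\tilde{R}_t^{L^\pi}(s,a)L_t^\pi(s,a)$ (as in the dual/occupation-measure form used in Lemma \ref{lipschitz_and_reward_perturb_of_mfg}); the reward modification only changes coordinates with $s\in\mathcal{S}_t^{\texttt{unreach}}$, where $L_t^\pi(s,a)=0$, so these terms drop out. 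The subtler piece is the best-response value $\max_{\pi'}V^{\pi'}(L^\pi)$: here $\pi'$ ranges over all policies and need not avoid unreachable states \emph{a priori}. I would argue via the fact that unreachable states under $\pi^{\texttt{exp}}$ remain unreachable under any $\pi'$ as well, so $L_t^{\pi'}(s,a)=0$ on those coordinates for every competitor $\pi'$, whence $V^{\pi'}(L^\pi)=-c_{\tilde{R}}(L^\pi)^\top L^{\pi'}$ is insensitive to the values of $\tilde{R}$ and $\tilde{P}$ restricted to unreachable states. This reduces the best-response comparison to an MDP on the reachable sub-state-space where the two models coincide.

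Finally, the $\texttt{NashConv}$ equality for the $N$-player game would follow by the same reachability argument applied pointwise to each agent's deviation value, using the decoupled transition structure (each agent's state evolves under $P_t$ independently). For each agent $i$, $\max_{\tilde\pi^i}V^i(\dots,\tilde\pi^i,\dots)-V^i(\boldsymbol{\pi})$ involves trajectories where agent $i$'s state, under any admissible policy, never reaches $\mathcal{S}_t^{\texttt{unreach}}$, so both the rewards received and the transitions taken are identical under $(R,P)$ and $(\tilde{R},\tilde{P})$ along all realized trajectories; averaging over $i\in[N]$ gives the claim. The intermediate equality $\texttt{NashConv}(\boldsymbol{\pi})=\texttt{NashConv}(\boldsymbol{\pi};R,P)$ is just notational/definitional.

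I expect the main obstacle to be the best-response term $\max_{\pi'}V^{\pi'}(L^\pi)$ in the exploitability, because the competing policy $\pi'$ is unconstrained and one must rule out any benefit from the modified (default) rewards/transitions at unreachable states. The crux is the lemma-level fact that a state unreachable under the fully-exploratory $\pi^{\texttt{exp}}$ is unreachable under \emph{every} policy, which hinges on $\pi^{\texttt{exp}}$ having full support and the transitions being policy-independent given the state-action pair. Once that reachability-monotonicity is cleanly stated and proved, the rest reduces to observing that all the modified coordinates are multiplied by zero occupation mass and hence do not contribute.
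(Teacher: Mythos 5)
Your proposal is correct and follows essentially the same route as the paper's proof: the same inductive reachability-dominance argument (a state unreachable under $\pi^{\texttt{exp}}$ carries zero occupation mass under every policy, since $\pi^{\texttt{exp}}$ has full action support), the same conclusion that $\Gamma(\pi;P)=\Gamma(\pi;\tilde{P})$ because the kernels differ only on zero-mass coordinates, and the same occupation-measure/trajectory argument showing that the value, best-response value, exploitability, and \texttt{NashConv} are all insensitive to the modifications. You also correctly isolate the one subtle point (the unconstrained competitor $\pi'$ in the best-response term), which is handled exactly as you describe by applying the reachability fact to $\pi'$ itself.
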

%XXX. Make the statement clearer. Should state the last conclusion as for the $N$-player game(s)? The idea is that we show that we are learning that $N$-player game, and the NashConv of the same policy in those two $N$-player games are always equal? mention initialization remains the same.
\begin{proof}
Let $d_t$ be the occupation measure of policy $\pi$ under the original transition model $P$ (for an arbitrary agent $i$), namely $d=\Gamma(\pi;P)$. We first show that $s\notin \mathcal{S}_t^{\texttt{unreach}}$ for any $s$ with $d_t(s,a)>0$ for some $a\in\mathcal{A}$. We prove this by induction. Since $s\notin \mathcal{S}_0^{\texttt{unreach}}$ if and only if $s_0=s$, and  $d_0(s,a)>0$ implies $s_0=s$, the statement holds when $t=0$. Suppose the statement is true for time step $t$. If $s\in\mathcal{S}_{t+1}^{\texttt{unreach}}$ and $d_{t+1}(s,a)>0$ for some $a\in\mathcal{A}$, then $\mathbb{P}^{\pi}(s_{t+1}=s)=\sum_{a'\in\mathcal{A}}d_{t+1}(s,a')>0$. On the other hand, note that
\begin{align*}
    \mathbb{P}^{\pi^{\texttt{exp}}}(s_{t+1}=s)&=\sum_{s'\in\mathcal{S}}\sum_{a'\in\mathcal{A}}\mathbb{P}^{\pi^{\texttt{exp}}}(s_{t}=s')\pi^{\texttt{exp}}(a'|s')P_t(s|s',a').
\end{align*}
Since $\pi^{\texttt{exp}}(a'|s')>0$ for all $s'\in\mathcal{S}, a'\in\mathcal{A}$, $s\in\mathcal{S}_{t+1}^{\texttt{unreach}}$ implies $\mathbb{P}^{\pi^{\texttt{exp}}}(s_{t}=s')P_t(s|s',a')=0$ for all $s'\in\mathcal{S}, a'\in\mathcal{A}$, which further implies $P_t(s|s',a')=0$ for any $s'\notin \mathcal{S}_{t}^{\texttt{unreach}}$.

Then since
\begin{align*}
    \mathbb{P}^{\pi}(s_{t+1}=s)&=\sum_{s'\in\mathcal{S}}\sum_{a'\in\mathcal{A}}d_t(s',a')P_t(s|s',a')>0,
\end{align*}
there exists at least one pair of $s',a'\in\mathcal{S}\times\mathcal{A}$ such that $d_t(s',a')>0$ and $P_t(s|s',a')>0$. By induction, $d_t(s',a')>0$ implies $s'\notin \mathcal{S}_{t}^{\texttt{unreach}}$, which, together with $s\in\mathcal{S}_{t+1}^{\texttt{unreach}}$ and $P_t(s|s',a')>0$ leads to contradiction. Then we have shown that $s\notin \mathcal{S}_{t+1}^{\texttt{unreach}}$ for any $s$ such that $d_{t+1}(s,a)>0$ for some $a\in\mathcal{A}$, thus the induction is finished.

We then prove that for any player, the same policy $\pi$ induces the same occupation measures on theses two models, namely $d_t(s,a)=\tilde{d}_t(s,a)$ for all $s\in\mathcal{S},\,a\in\mathcal{A},\,t\in\mathcal{T}$, where $\tilde{d}=\Gamma(\pi;\tilde{P})$ is the occupation measure of policy $\pi$ under the modified transition model $\tilde{P}$ (for an arbitrary agent $i$). We show this by induction on time $t$. When $t=0$, it is true by the same initialization. Suppose $d_t(s,a)=\tilde d_t(s,a)$ for all $(s,a)\in\mathcal{S}\times\mathcal{A}$, where $d_t$ and $\tilde d_t$ are the occupation measures under the original model and the modified model, respectively. Since
\begin{align*}
    &d_{t+1}(s,a)=\pi_{t+1}(a|s)\mathbb{P}^{\pi}(s_{t+1}=s),\\
    &\tilde d_{t+1}(s,a)=\pi_{t+1}(a|s)\tilde{\mathbb{P}}^{\pi}(s_{t+1}=s),
\end{align*}
We only need to show $\mathbb{P}^{\pi}(s_{t+1}=s)=\tilde{\mathbb{P}}^{\pi}(s_{t+1}=s)$. In fact,
\begin{align*}
    \mathbb{P}^{\pi}(s_{t+1}=s)&=\sum_{s'\in\mathcal{S}}\sum_{a\in\mathcal{A}}d_t(s',a)P_t(s|s',a)=\sum_{s'\in\mathcal{S}}\sum_{a\in\mathcal{A}}\tilde d_t(s',a)P_t(s|s',a)\\
   &=\sum_{s'\in\mathcal{S}}\sum_{a\in\mathcal{A}}\tilde d_t(s',a)\tilde{P}_t(s|s',a)=\tilde{\mathbb{P}}^{\pi}(s_{t+1}=s),
\end{align*}
where we use the fact that $s'\notin \mathcal{S}_t^{\texttt{unreach}}$ for any $s'$ with $d_t(s',a)>0$ and $P_t(s|s',a)=\tilde{P}_t(s|s',a)$ for $s'\notin \mathcal{S}_t^{\texttt{unreach}}$. The induction is finished (and hence we have proved that $\Gamma(\pi;P)=d=\tilde{d}=\Gamma(\pi;\tilde{P})$).

The last step is to prove the $\texttt{NashConv}$ of the same strategy profile under these two $N$-player games are the same. This reduces to showing that for any $i\in[N]$,  $V^i(\boldsymbol{\pi})=\tilde V^i(\boldsymbol{\pi})$ for any strategy profile $\boldsymbol{\pi}$. Here $\tilde V^i(\boldsymbol{\pi})=\tilde{\mathbb{E}}_{\boldsymbol{\pi}}\left[\sum_{t\in\mathcal{T}}\tilde r_t(s_t^i,a_t^i,L_t^N)\right]$ is the expected cumulative reward of agent $i$ under the modified model. More precisely, the expectation $\tilde{\mathbb{E}}_{\boldsymbol{\pi}}$ is over the trajectory of states and actions when the agents take independent actions $a_t^j\sim \pi_t^j(s_t^j)$ for $j\in[N]$ and $t\in\mathcal{T}$ under the modified rewards $\tilde{R}$ and transitions $\tilde{P}$. Let $d_t^j$ (resp. $\tilde{d}_t^j$) be the occupation measure of agent $j\in[N]$ under the original (resp. modified) model, who takes policy sequence $\pi^j$. 
Then we have $d_t^j(s,a)=\tilde{d}_t^j(s,a)$ for any $j\in[N],s\in\mathcal{S},a\in\mathcal{A}$ by the previous induction proof. In addition, we have
\begin{align*}
    \tilde V^i(\boldsymbol{\pi})=\tilde{\mathbb{E}}_{\boldsymbol{\pi}}\left[\sum_{t\in\mathcal{T}}\tilde r_t(s_t^i,a_t^i,L_t^N)\right]=\sum_{t\in\mathcal{T}}\sum_{({\bf s}_t,{\bf a}_t)\in\mathcal{S}^N\times\mathcal{A}^N}\tilde{R}_t(s_t^i,a_t^i,L^N({\bf s}_t,{\bf a}_t))\prod_{j\in[N]}d^j_t(s_t^j,a_t^j),
\end{align*}
where $L^N$ maps $\mathcal{S}^N\times\mathcal{A}^N$ to $\Delta(\mathcal{S}\times\mathcal{A})$, defined as  %$\pmb{s},\pmb{a}$ to XXX, 
\begin{align*}
    L^N({\bf s},{\bf a})(s,a):=\frac{1}{N}\sum_{i\in[N]}{\bf 1}(s^i=s,a^i=a).
\end{align*}
Since we have proved that $d_t^i(s_t^i,a_t^i)>0$ implies $s_t^i\notin\mathcal{S}_t^{\texttt{unreach}}$, therefore by definition we also have that $\tilde{R}_t(s_t^i,a_t^i,L^N({\bf s}_t,{\bf a}_t))=R_t(s_t^i,a_t^i,L^N({\bf s}_t,{\bf a}_t))$ when $d_t^i(s_t^i,a_t^i)>0$. Hence 
we have 
\begin{align*}
    \tilde V^i(\boldsymbol{\pi})&=\sum_{t\in\mathcal{T}}\sum_{({\bf s}_t,{\bf a}_t)\in\mathcal{S}^N\times\mathcal{A}^N}\tilde{R}_t(s_t^i,a_t^i,L^N({\bf s}_t,{\bf a}_t))\prod_{j\in[N]}\tilde{d}^j_t(s_t^j,a_t^j)\\
    &=\sum_{t\in\mathcal{T}}\sum_{({\bf s}_t,{\bf a}_t)\in\mathcal{S}^N\times\mathcal{A}^N}R_t(s_t^i,a_t^i,L^N({\bf s}_t,{\bf a}_t))\prod_{j\in[N]}d^j_t(s_t^j,a_t^j)=V^i(\boldsymbol{\pi}).
\end{align*}
The proof for exploitability is nearly identical and hence omitted. 
The proof is thus finished.
\end{proof}

%XXX. Double check exploitability equality above and the proof above in general. We want this claim because otherwise need to explain that the N2MFG result immediately applies to $\tilde{R}$ and $\tilde{P}$ (although that is fine as well).

\subsection{Convergence analysis with approximations}\label{conv_with_approx} 

%XXX. Change $c$ and $P$ to the modified $\tilde{c}$/$c_{\tilde{R}}$ and $\tilde{P}$ and use notation $\texttt{Expl}(\pi;\tilde{R},\tilde{P})$ (the last is no need as it's equal to $\texttt{Expl}(\pi)$) . Remember to go through all the places that use $R$ and $P$ (including lemmas from the previous section). Consider changing $\alpha^k,\eta^k$ to $\alpha,\eta$. 
%Similarly for \texttt{FBS-Cons} in the previous section. 
%Also strongly monotone stated together as in the previous section.

We now propose Algorithm \ref{MF-OMI-FBS-Cons-Approx}, namely \texttt{MF-OMI-FBS-Approx}, which is an inexact version of \texttt{MF-OMI-FBS} with approximation oracles $\hat{c}^k$ and $\hat{P}^k$ for the reward vector $c_{\tilde{R}}(d^k)$ and the transition model $\tilde{P}$ with default modifications defined in Lemma \ref{Pmod_equivalence}, respectively. 
% To facilitate the discussion, without loss of generality we  
%XXX. Inexact versions of the algorithms with $\hat{c}^k$ and $\hat{A}^k$ approximating $c(d^k)$ and $A$, resp. %Move to the next section as part of the RL stuff. %Add as a corollary for clarity. 
\begin{algorithm}[ht]
\caption{\texttt{MF-OMI-FBS-Approx}: MF-OMI-FBS with Approximation Oracles}
\label{MF-OMI-FBS-Cons-Approx}
\begin{algorithmic}[1]
\STATE {\bfseries Input:} initial policy sequence $\pi^0\in\Pi$,
step-size $\alpha>0$, and perturbation coefficient $\eta\geq 0$.
\STATE Compute $d^0=L^{\pi^0}$.
\FOR{$k=0, 1, \dots$}
\STATE Compute an approximation $\hat{c}^k$ of $c_{\tilde{R}}(d^k)$ and an approximate transition model $\hat{P}^k$ of $\tilde{P}$, resp. 
\STATE Update $\tilde{d}^{k+1}=d^k-\alpha (\hat{c}^k+\eta d^k)$.
\STATE Compute $d^{k+1}$ as the solution to the convex quadratic program: 
\[
\begin{array}{llll}
\text{minimize} & \|d-\tilde{d}^{k+1}\|_2^2 & \text{subject to}& A_{\hat{P}^k}d=b,\,d\geq 0.
\end{array}
\]\vspace{-0.57cm}
%\STATE $d^{k+1/2}=\texttt{Proj}_{\Delta_n}(L^k-\alpha^k (c_{L^k}-A^\top y^k)$.
%Projection oracle (for non-negative cone and simplex) for the backward step.
\ENDFOR
\end{algorithmic}
\end{algorithm}

The following theorem establishes the convergence of \texttt{MF-OMI-FBS-Approx}. 
\begin{theorem}\label{FBS_convergence_approx}
%Suppose that the same assumptions in Theorem \ref{FBS_convergence} hold, 
Suppose that Assumptions \ref{outstanding_assumptions_1} and \ref{outstanding_assumptions_2} hold, 
and in addition that for $k\geq 0$, 
\[
\|\hat{c}^k-c_{\tilde{R}}(d^k)\|_2\leq \epsilon_1^k,\quad  \max_{s\in\mathcal{S},a\in\mathcal{A},t=0,\dots,T-2}\sum_{s'\in\mathcal{S}}|\hat{P}_t^k(s'|s,a)-\tilde{P}_t(s'|s,a)|\leq \epsilon_2^k,
\]
where {\color{black}$\epsilon_1^k,\epsilon_2^k\geq 0$}, $\hat{c}^k\in\mathbb{R}^{SAT}$ with $\|\hat{c}^k\|_{\infty}\leq R_{\max}$, $\hat{P}^k=\{\hat{P}_t\}_{t\in\mathcal{T}}$ is a transition model, namely $\hat{P}_t^k(s'|s,a)\geq 0$ and $\sum_{s'\in\mathcal{S}}\hat{P}_t^k(s'|s,a)=1$. Then we have the following convergence results. 
\begin{itemize}
\item When $\lambda=0$ in Assumption \ref{outstanding_assumptions_2}, for any $\epsilon>0$, if we adopt the step-size $\alpha=\epsilon/(2C_R^2S^2A^2+2\epsilon^2)$ and the perturbation coefficient $\eta=\epsilon$, then for any $\pi^k\in\texttt{Normalize}(d^k)$ with $d^k$ from Algorithm \ref{MF-OMI-FBS-Cons-Approx}, we have 
\[
\texttt{Expl}(\pi^k)\leq 2T\epsilon+ \sqrt{SAT}(2{\color{black}T^2}C_R+R_{\max}{\color{black}T})\left(2\left(1-\kappa_{\epsilon}\right)^{\frac{k}{2}}+\sum_{j=0}^{k-1}(1-\kappa_{\epsilon})^{\frac{k-j-1}{2}}\tilde{\epsilon}^j\right), 
\]
where $\kappa_{\epsilon}:=\epsilon^2/(2C_R^2S^2A^2+2\epsilon^2)\in(0,1)$ and \[
\tilde{\epsilon}^j:=\dfrac{T(T-1)}{2}\epsilon_2^j+T\sqrt{(4+2\alpha\eta+2\alpha R_{\max}SA)(T-1)\epsilon_2^j}+\alpha\epsilon_1^j.
\]
\item When $\lambda>0$ in Assumption \ref{outstanding_assumptions_2}, if we adopt  $\alpha=\lambda/(2C_R^2S^2A^2)$ and  $\eta=0$, then for any $\pi^k\in\texttt{Normalize}(d^k)$ with $d^k$ from Algorithm \ref{MF-OMI-FBS-Cons-Approx}, we have 
\[
\texttt{Expl}(\pi^k)\leq \sqrt{SAT}(2{\color{black}T^2}C_R+R_{\max}{\color{black}T})\left(2\left(1-\kappa\right)^{\frac{k}{2}}+\sum_{j=0}^{k-1}(1-\kappa)^{\frac{k-j-1}{2}}\tilde{\epsilon}^j\right), 
\]
where $\kappa:=\lambda^2/(2C_R^2S^2A^2)\in(0,1)$, and $\tilde{\epsilon}^j$ is defined the same as above when $\lambda=0$. 
\end{itemize}
\end{theorem}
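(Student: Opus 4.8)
The plan is to mirror the exact convergence proof of Theorem~\ref{FBS_convergence} while carrying the per-iteration approximation errors through the contraction. Let $F_{\alpha,\eta}(d)=\texttt{Proj}_{\{x\mid A_{\tilde{P}}x=b,\,x\geq 0\}}(d-\alpha(c_{\tilde{R}}(d)+\eta d))$ be the \emph{exact} FBS operator for the default-modified model $(\tilde{R},\tilde{P})$ of Lemma~\ref{Pmod_equivalence}. Because the modification leaves rewards and transitions untouched on reachable states (and the iterates are occupation measures supported there), the contraction estimate \eqref{F_contraction}--\eqref{F_alpha_eta_contractive} from the proof of Theorem~\ref{FBS_convergence} applies verbatim, so $F_{\alpha,\eta}$ is a $\sqrt{1-\kappa}$-contraction on $(\Delta(\mathcal{S}\times\mathcal{A}))^T$ with unique fixed point $d^\star$ (the NE occupation measure of the $\eta$-perturbed game), where $\kappa=\kappa_\epsilon$ when $\lambda=0,\eta=\epsilon$ and $\kappa=\lambda^2/(2C_R^2S^2A^2)$ when $\lambda>0,\eta=0$.

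The core reduction is to control the one-step deviation of the approximate update from the exact operator, i.e.\ to show $\|d^{k+1}-F_{\alpha,\eta}(d^k)\|_2\leq\tilde{\epsilon}^k$. Granting this, the triangle inequality and the contraction give the recursion $\|d^{k+1}-d^\star\|_2\leq\sqrt{1-\kappa}\,\|d^k-d^\star\|_2+\tilde{\epsilon}^k$, which unrolls to
\[
\|d^k-d^\star\|_2\leq (1-\kappa)^{k/2}\|d^0-d^\star\|_2+\sum_{j=0}^{k-1}(1-\kappa)^{(k-j-1)/2}\tilde{\epsilon}^j.
\]
Then, exactly as in Theorem~\ref{FBS_convergence}, I would use $\|d^0-d^\star\|_2\leq 2T$, pass from $\ell_2$ to $\ell_1$ via $\|\cdot\|_1\leq\sqrt{SAT}\|\cdot\|_2$, and convert the distance to exploitability through Lemma~\ref{lipschitz_and_reward_perturb_of_mfg} (which supplies the $2T\epsilon$ offset when $\eta=\epsilon$), absorbing the residual factor of $T$ into the common prefactor $\sqrt{SAT}(2T^2C_R+R_{\max}T)$, before specializing $\eta$ and $\kappa$.

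It remains to bound the per-step error, which is where the real work lies. Abbreviating $\texttt{Proj}_{\hat{P}^k}:=\texttt{Proj}_{\{x\mid A_{\hat{P}^k}x=b,\,x\geq 0\}}$ and similarly $\texttt{Proj}_{\tilde{P}}$, I would write $\tilde{d}^{k+1}=d^k-\alpha(\hat{c}^k+\eta d^k)$ for the approximate forward step and $\bar{d}^{k+1}=d^k-\alpha(c_{\tilde{R}}(d^k)+\eta d^k)$ for the exact one, and split
\[
\|d^{k+1}-F_{\alpha,\eta}(d^k)\|_2\leq \|\texttt{Proj}_{\hat{P}^k}(\tilde{d}^{k+1})-\texttt{Proj}_{\hat{P}^k}(\bar{d}^{k+1})\|_2+\|\texttt{Proj}_{\hat{P}^k}(\bar{d}^{k+1})-\texttt{Proj}_{\tilde{P}}(\bar{d}^{k+1})\|_2.
\]
The first (forward-step) term is handled by non-expansiveness of the Euclidean projection: it is at most $\alpha\|\hat{c}^k-c_{\tilde{R}}(d^k)\|_2\leq\alpha\epsilon_1^k$, giving the $\alpha\epsilon_1^j$ contribution to $\tilde{\epsilon}^j$. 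The second (projection-set) term, measuring how the projection of a fixed vector moves when the feasible set is induced by two transition kernels differing by $\epsilon_2^k$ in $\ell_1$, is the main obstacle, since projections are notoriously only Hölder-stable under constraint perturbations.

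To tame the projection-set term I would invoke the nearly unconstrained reformulation of Lemma~\ref{projection_d_vs_L}, which recasts the constrained projection so that its transition dependence is routed through the forward map $\Gamma(\cdot;P)$ rather than through the awkward equality constraint $A_P x=b$. Let $\pi^{\hat{P}}\in\texttt{Normalize}(\texttt{Proj}_{\hat{P}^k}(\bar{d}^{k+1}))$; by Lemma~\ref{consistency_recover} the point $\Gamma(\pi^{\hat{P}};\tilde{P})$ is feasible for the $\tilde{P}$-set, so the gap decomposes into (i) a simulation-lemma estimate comparing $\Gamma(\pi^{\hat{P}};\hat{P}^k)$ and $\Gamma(\pi^{\hat{P}};\tilde{P})$, where a one-step transition error propagates to every later marginal and the errors compound additively over the horizon to yield the linear $\tfrac{T(T-1)}{2}\epsilon_2^k$ term; and (ii) a projection-stability estimate reflecting the residual transition dependence left by the reformulation, where the $2$-strong convexity of $\|\cdot-\bar{d}^{k+1}\|_2^2$ turns a first-order feasibility perturbation into a half-order movement of the minimizer, producing the $T\sqrt{(4+2\alpha\eta+2\alpha R_{\max}SA)(T-1)\epsilon_2^k}$ term; here the constant is a crude bound on the magnitude of the forward-step point $\bar{d}^{k+1}=(1-\alpha\eta)d^k-\alpha c_{\tilde{R}}(d^k)$, tracked through $\|d^k\|_1=T$, $\alpha\eta$, and $\alpha\|\hat{c}^k\|_\infty\leq\alpha R_{\max}$. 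Assembling (i), (ii), and the forward-step bound gives $\|d^{k+1}-F_{\alpha,\eta}(d^k)\|_2\leq\tilde{\epsilon}^k$, completing the argument.
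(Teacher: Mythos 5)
Your proposal is correct and follows essentially the same route as the paper's proof: the same contraction-plus-perturbation recursion, the same use of Lemma \ref{projection_d_vs_L} and Lemma \ref{lipschitz_gamma_pi_P} to reduce the constraint-set perturbation to a simulation estimate plus a strong-convexity (half-order) stability bound, and the same final conversion to exploitability. The only cosmetic difference is that you apply the projection-set perturbation at the exact forward step $\bar{d}^{k+1}$ while the paper applies it at the approximate one $\tilde{d}^{k+1}$; both orderings of the triangle inequality yield the same constants.
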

We will need the following lemmas to prove Theorem \ref{FBS_convergence_approx}, the proof of which can be found in Section \ref{sec:proof-lemma-projection_d_vs_L}.
\begin{lemma}\label{projection_d_vs_L}
Let $\hat{P}$ be an arbitrary transition model, and let $\tilde{d}\in \mathbb{R}^{SAT}$ be some arbitrary constant vector. Let $\hat{d}$ be the solution to minimizing $\|d-\tilde{d}\|_2^2$ over the set of $d$ with $A_{\hat{P}}d=b,d\geq 0$. In addition, let $\hat{\pi}$ be a solution to minimizing $\|\Gamma(\pi;\hat{P})-\tilde{d}\|_2$ over $\hat{\pi}\in\Pi$. Then $\hat{d}=\Gamma(\hat{\pi};\hat{P})$. 
\end{lemma}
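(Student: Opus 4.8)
The plan is to reduce both minimization problems to projection onto one and the same closed convex set, and then invoke uniqueness of the Euclidean projection. The crucial structural fact, which I would establish first, is that the image of the policy class under $\Gamma(\cdot;\hat{P})$ coincides exactly with the occupation-measure polytope. Concretely, set $\mathcal{D}:=\{d\in\mathbb{R}^{SAT}\mid A_{\hat{P}}d=b,\ d\geq 0\}$; I claim that $\mathcal{D}=\{\Gamma(\pi;\hat{P})\mid \pi\in\Pi\}$. Both inclusions follow directly from Lemma~\ref{consistency_recover}: the second part of that lemma gives $A_{\hat{P}}\Gamma(\pi;\hat{P})=b$ and $\Gamma(\pi;\hat{P})\geq 0$ for every $\pi\in\Pi$, which yields $\{\Gamma(\pi;\hat{P})\mid\pi\in\Pi\}\subseteq\mathcal{D}$; conversely, given any $x\in\mathcal{D}$, picking any $\pi\in\texttt{Normalize}(x)$ and applying the first part of the lemma gives $\Gamma(\pi;\hat{P})=x$, so $x$ lies in the image, establishing $\mathcal{D}\subseteq\{\Gamma(\pi;\hat{P})\mid\pi\in\Pi\}$.

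Next I would record that $\mathcal{D}$ is nonempty (it contains $\Gamma(\pi;\hat{P})$ for any $\pi\in\Pi$), closed and convex, being a polytope defined by linear equalities and nonnegativity constraints. Hence the projection problem $\min_{d\in\mathcal{D}}\|d-\tilde{d}\|_2^2$ has a \emph{unique} minimizer, which is by definition $\hat{d}$. In particular $\|\hat{d}-\tilde{d}\|_2=\min_{d\in\mathcal{D}}\|d-\tilde{d}\|_2$, and any feasible point of $\mathcal{D}$ attaining this value must equal $\hat{d}$.

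Finally I would translate the policy-space problem into this language. Since $t\mapsto t^2$ is increasing on $[0,\infty)$, minimizing $\|\cdot-\tilde{d}\|_2$ and $\|\cdot-\tilde{d}\|_2^2$ over the same set share the same minimizers, so comparing the two problems is legitimate; and since the set of attainable values $\{\Gamma(\pi;\hat{P})\mid\pi\in\Pi\}$ equals $\mathcal{D}$ by the first step, we obtain
\[
\|\Gamma(\hat{\pi};\hat{P})-\tilde{d}\|_2=\min_{\pi\in\Pi}\|\Gamma(\pi;\hat{P})-\tilde{d}\|_2=\min_{d\in\mathcal{D}}\|d-\tilde{d}\|_2=\|\hat{d}-\tilde{d}\|_2.
\]
Since $\Gamma(\hat{\pi};\hat{P})\in\mathcal{D}$ (again by Lemma~\ref{consistency_recover}) attains the projection value, uniqueness of the projection forces $\Gamma(\hat{\pi};\hat{P})=\hat{d}$, which is the claim.

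I expect the only genuine subtlety, rather than an obstacle, to be the set-valuedness of $\texttt{Normalize}$ and the many-to-one nature of $\pi\mapsto\Gamma(\pi;\hat{P})$: different policies may induce the same occupation measure, and a minimizing $\hat{\pi}$ need not be unique. The argument sidesteps this entirely by working at the level of occupation-measure \emph{values} rather than policies, so that uniqueness is only ever needed for the Euclidean projection $\hat{d}$, which the convexity of $\mathcal{D}$ guarantees.
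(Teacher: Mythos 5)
Your proposal is correct and follows essentially the same route as the paper's proof: both rely on Lemma \ref{consistency_recover} in both directions (to show $\Gamma(\hat{\pi};\hat{P})$ is feasible for the polytope and that $\hat{d}$ is realized as $\Gamma(\bar{\pi};\hat{P})$ for some $\bar{\pi}\in\texttt{Normalize}(\hat{d})$), deduce that the two optimal values coincide, and conclude via uniqueness of the Euclidean projection onto the closed convex set $\{d\mid A_{\hat{P}}d=b,\ d\geq 0\}$. Your explicit identification of the polytope with the image of $\Gamma(\cdot;\hat{P})$ is just a cleaner packaging of the paper's two-inequality argument.
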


\begin{lemma}\label{lipschitz_gamma_pi_P}
Let $\hat{P}$ be an arbitrary transition model. Then we have that for any $\pi\in\Pi$, 
\[
\|\Gamma(\pi;\hat{P})-\Gamma(\pi;\tilde{P})\|_1\leq \dfrac{T(T-1)}{2}\max_{s\in\mathcal{S},a\in\mathcal{A},t=0,\dots,T-2}\sum_{s'\in\mathcal{S}}|\hat{P}_t(s'|s,a)-\tilde{P}_t(s'|s,a)|.
\]
\end{lemma}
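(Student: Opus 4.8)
The plan is to track the discrepancy between the two flows one time step at a time, reducing everything to the induced state marginals. Write $\hat{L}=\Gamma(\pi;\hat{P})$ and $\tilde{L}=\Gamma(\pi;\tilde{P})$, and for each $t\in\mathcal{T}$ define the state marginals $\hat{\mu}_t(s)=\sum_{a\in\mathcal{A}}\hat{L}_t(s,a)$ and $\tilde{\mu}_t(s)=\sum_{a\in\mathcal{A}}\tilde{L}_t(s,a)$. The first observation is that since both flows are generated by the \emph{same} policy $\pi$ through \eqref{Gamma_pi}, we have $\hat{L}_t(s,a)=\pi_t(a|s)\hat{\mu}_t(s)$ and $\tilde{L}_t(s,a)=\pi_t(a|s)\tilde{\mu}_t(s)$. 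Consequently $\|\hat{L}_t-\tilde{L}_t\|_1=\sum_{s,a}\pi_t(a|s)|\hat{\mu}_t(s)-\tilde{\mu}_t(s)|=\|\hat{\mu}_t-\tilde{\mu}_t\|_1$, so the shared policy factors out and the action dimension contributes nothing to the discrepancy. It therefore suffices to control $\|\hat{\mu}_t-\tilde{\mu}_t\|_1$ and then sum over $t$.

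Next I would set up the one-step recursion. From \eqref{Gamma_pi}, $\hat{\mu}_{t+1}(s')=\sum_{s,a}\hat{P}_t(s'|s,a)\pi_t(a|s)\hat{\mu}_t(s)$, and similarly for $\tilde{\mu}_{t+1}$. Adding and subtracting the cross term $\sum_{s,a}\hat{P}_t(s'|s,a)\pi_t(a|s)\tilde{\mu}_t(s)$ decomposes $\hat{\mu}_{t+1}-\tilde{\mu}_{t+1}$ into a propagation part driven by $\hat{\mu}_t-\tilde{\mu}_t$ and a perturbation part driven by $\hat{P}_t-\tilde{P}_t$. Taking $\|\cdot\|_1$ over $s'$ and using the triangle inequality, the propagation part is bounded by $\|\hat{\mu}_t-\tilde{\mu}_t\|_1$ because $\sum_{s'}\hat{P}_t(s'|s,a)=1$ and $\sum_a\pi_t(a|s)=1$, while the perturbation part is bounded by $\delta:=\max_{s,a,\,t\le T-2}\sum_{s'}|\hat{P}_t(s'|s,a)-\tilde{P}_t(s'|s,a)|$ since $\sum_{s,a}\pi_t(a|s)\tilde{\mu}_t(s)=\sum_s\tilde{\mu}_t(s)=1$ (as $\tilde{\mu}_t$ is a probability distribution over states). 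This yields the one-step inequality $\|\hat{\mu}_{t+1}-\tilde{\mu}_{t+1}\|_1\leq\|\hat{\mu}_t-\tilde{\mu}_t\|_1+\delta$.

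Since the two flows share the initial condition, $\hat{\mu}_0=\tilde{\mu}_0=\mu_0$, a straightforward induction gives $\|\hat{\mu}_t-\tilde{\mu}_t\|_1\leq t\delta$. Summing over $t=0,\dots,T-1$ and invoking the reduction from the first step, $\|\Gamma(\pi;\hat{P})-\Gamma(\pi;\tilde{P})\|_1=\sum_{t=0}^{T-1}\|\hat{\mu}_t-\tilde{\mu}_t\|_1\leq\delta\sum_{t=0}^{T-1}t=\frac{T(T-1)}{2}\delta$, which is exactly the claimed bound.

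The computation is essentially mechanical once the add-subtract decomposition is in place; I expect no serious obstacle beyond the bookkeeping. The only points requiring care are the reduction to state marginals (recognizing that the common policy factor makes the action coordinate irrelevant) and keeping the three stochasticity constraints straight, so that the propagation term remains non-expansive and the perturbation term collapses cleanly to the single quantity $\delta$ without accumulating extra $S$ or $A$ factors.
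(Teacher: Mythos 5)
Your proof is correct and follows essentially the same route as the paper's: the same add-and-subtract decomposition into a non-expansive propagation term and a perturbation term bounded by $\delta$, the same one-step recursion $\|\cdot\|_1$-increment of $\delta$, the induction to $t\delta$, and the final summation to $\frac{T(T-1)}{2}\delta$. The only difference is that you first factor out the shared policy and run the recursion on state marginals, whereas the paper runs the identical recursion directly on the state-action flows $L_t$; this is a cosmetic repackaging, not a different argument.
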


%We are now ready to prove Theorem \ref{FBS_convergence_approx}.
\begin{proof}[Proof of Theorem \ref{FBS_convergence_approx}]
%To facilitate the presentation below, we first represent the mapping from $\pi$ to $L^\pi$ in \eqref{Gamma_pi} as $\Gamma(\pi;P)$ to make the dependency on $P$ explicit. Under such a notation, 
As in the proof of Theorem \ref{FBS_convergence}, we prove the convergence result for generic $\lambda$ and $\eta$ with $\lambda+\eta>0$ and $\lambda\neq \eta$, and then specialize it to $\lambda=0,\,\eta>0$ and $\lambda>0,\,\eta=0$, resp. to derive the claimed conclusions.

Firstly, notice that by Lemma \ref{projection_d_vs_L}, the projection step \[
d^{k+1}=\texttt{Proj}_{\{x|A_{\hat{P}^k}x=b,{\color{black}x\geq 0}\}}(\tilde{d}^{k+1})=\texttt{Proj}_{\{x|A_{\hat{P}^k}x=b,x\geq 0\}}(d^k-\alpha(\hat{c}^k+\eta d^k))
\]
on Line 6 of Algorithm \ref{MF-OMI-FBS-Cons-Approx} can be denoted as $d^{k+1}=\Gamma(\pi^{k+1};\hat{P}^k)$, where $\pi^{k+1}$ minimizes $\|\Gamma(\pi;\hat{P}^k)-\tilde{d}^{k+1}\|_2^2$ over $\pi\in\Pi$. 

%XXX. Since $\hat{A}^k$ is the consistency matrix of $\hat{P}^k$, the above convex QP + normalize is equivalent to minimizing $\|\Gamma(\pi;\hat{P}^k)-\tilde{d}^{k+1}\|_2^2$ to get $\pi^{k+1}$. In addition, we have $d^{k+1}=\Gamma(\pi^{k+1};\hat{P}^k)$. 

Now we show that $d^{k+1}$ is close to 
\[
d^{k+1,\star}:=\texttt{Proj}_{\{x|A_{\tilde{P}}x=b,x\geq 0\}}(\tilde{d}^{k+1})=\texttt{Proj}_{\{x|A_{\tilde{P}}x=b,x\geq 0\}}(d^k-\alpha(\hat{c}^k+\eta d^k)).
\]
Again by Lemma \ref{projection_d_vs_L}, we have $d^{k+1,\star}=\Gamma(\pi^{k+1,\star};\tilde{P})$ where $\pi^{k+1,\star}$ minimizes $\|\Gamma(\pi;\tilde{P})-\tilde{d}^{k+1}\|_2^2$ over $\pi\in\Pi$. In addition, we have that for any $\pi\in\Pi$, by Lemma \ref{lipschitz_gamma_pi_P}, we have
\[
\left|\|\Gamma(\pi;\tilde{P})-\tilde{d}^{k+1}\|_2-\|\Gamma(\pi;\hat{P}^k)-\tilde{d}^{k+1}\|_2\right|\leq \|\Gamma(\pi;\tilde{P})-\Gamma(\pi;\hat{P}^k)\|_2\leq \|\Gamma(\pi;\tilde{P})-\Gamma(\pi;\hat{P}^k)\|_1\leq\dfrac{T(T-1)}{2}\epsilon_2^k. 
\]
Hence we have 
\begin{align}\label{dd_diff_of_diff_bound}
&\left|\|d^{k+1,\star}-\tilde{d}^{k+1}\|_2-\|d^{k+1}-\tilde{d}^{k+1}\|_2\right|=\left|\inf_{\pi\in\Pi}\|\Gamma(\pi;P)-\tilde{d}^{k+1}\|_2-\inf_{\pi\in\Pi}\|\Gamma(\pi;\hat{P}^k)-\tilde{d}^{k+1}\|_2\right| \notag\\
&=\left|-\sup_{\pi\in\Pi}\left(-\|\Gamma(\pi;P)-\tilde{d}^{k+1}\|_2\right)+\sup_{\pi\in\Pi}\left(-\|\Gamma(\pi;\hat{P}^k)-\tilde{d}^{k+1}\|_2\right|\right)\\
&\leq \sup_{\pi\in\Pi}\left|\|\Gamma(\pi;P)-\tilde{d}^{k+1}\|_2-\|\Gamma(\pi;\hat{P}^k)-\tilde{d}^{k+1}\|_2\right|\leq \dfrac{T(T-1)}{2}\epsilon_2^k. \notag
\end{align}
% where we make use of the fact that 
% \[
% \inf_{x\in U}g(x)
% \]

% XXX. 
% \[
% \|x-c\|_2^2-\|y-c\|_2^2\geq \|x-y\|_2^2
% \]

Now let $\pi^{k+1}\in\texttt{Normalize}(d^{k+1})$ and then define $\hat{d}^{k+1,\star}:=\Gamma(\pi^{k+1};P)$. Then noticing that $d^{k+1}=\Gamma(\pi^{k+1};\hat{P}^k)$ by Lemma \ref{consistency_recover}, we have by Lemma \ref{lipschitz_gamma_pi_P} that 
\begin{equation}\label{dd_diff_bound}
\|d^{k+1}-\hat{d}^{k+1,\star}\|_2\leq\dfrac{T(T-1)}{2}\epsilon_2^k.
\end{equation}

Next we show that $\hat{d}^{k+1,\star}$ is feasible and $O(\epsilon_2^k)$-sub-optimal for minimizing $\|d-\tilde{d}^{k+1}\|_2^2$ subject to $Ad=b,d\geq 0$. To see this, first notice that by Lemma \ref{consistency_recover}, we have $A\hat{d}^{k+1,\star}=b$ and $\hat{d}^{k+1,\star}\geq 0$, and hence $\hat{d}^{k+1,\star}$ is feasible for the aforementioned quadratic optimization problem. Then to show the sub-optimality gap, we simply notice that by \eqref{dd_diff_of_diff_bound} and \eqref{dd_diff_bound}, 
\begin{align}\label{final_dd_diff_of_diff_bound}
&\|\hat{d}^{k+1,\star}-\tilde{d}^{k+1}\|_2\leq  \|d^{k+1} - \tilde{d}^{k+1}\|_2+\|d^{k+1}-\hat{d}^{k+1,\star}\|_2\notag\\
&\leq \left|\|d^{k+1}-\tilde{d}^{k+1}\|_2-\|d^{k+1,\star}-\tilde{d}^{k+1}\|_2\right|+\|d^{k+1,\star}-\tilde{d}^{k+1}\|_2+\|d^{k+1}-\hat{d}^{k+1,\star}\|_2\\
&\leq \|d^{k+1,\star}-\tilde{d}^{k+1}\|_2 + T(T-1)\epsilon_2^k.\notag
\end{align}

Then by the strong convexity of the quadratic objective $f(d):=\|d-\tilde{d}^{k+1}\|_2^2$, we have 
\begin{align}\label{sharpness_bound}
f(\hat{d}^{k+1,\star})-f(d^{k+1,\star})&\geq \nabla f(d^{k+1,\star})^\top ( \hat{d}^{k+1,\star}-d^{k+1,\star})+
\|\hat{d}^{k+1,\star}-d^{k+1,\star}\|_2^2\\
&\geq \|\hat{d}^{k+1,\star}-d^{k+1,\star}\|_2^2,\notag
%O(\epsilon_2^k).
\end{align}
where the first inequality is by strong convexity, and for the second inequality, we make use of the property that $d^{k+1,\star}$ is the optimal solution to minimizing $f(d)$ subject to $Ad=b,d\geq 0$, and that $\hat{d}^{k+1,\star}$ is also feasible in the sense that $A\hat{d}^{k+1,\star}=b,\hat{d}^{k+1,\star}\geq 0$, and hence by the global optimality condition of convex constrained optimization problems \cite[\S4.2.3]{boyd2004convex} we have $\nabla f(d^{k+1,\star})^\top ( \hat{d}^{k+1,\star}-d^{k+1,\star})\geq 0$.

Now since $d^k$, $\hat{d}_t^{k+1,\star}$ and $d_t^{k+1,\star}$ ($t\in\mathcal{T}$) are probability distributions over $\mathcal{S}\times\mathcal{A}$ due to Lemma \ref{consistency_recover} and their definitions by $\Gamma$, and since $\tilde{d}^{k+1}=d^k-\alpha(\hat{c}^k+\eta d^k)$, %with $d_t^k$ also being a probability distribution 
we have
\begin{align*}
&\|\hat{d}^{k+1,\star}-\tilde{d}^{k+1}\|_2+\|d^{k+1,\star}-\tilde{d}^{k+1}\|_2\leq \|\hat{d}^{k+1,\star}-\tilde{d}^{k+1}\|_1+\|d^{k+1,\star}-\tilde{d}^{k+1}\|_1\\
&\leq \|\hat{d}^{k+1,\star}-(1-\alpha\eta)d^k\|_1+\|d^{k+1,\star}-(1-\alpha\eta)d^k\|_1+2\alpha\|\hat{c}^k\|_1\\
&\leq 2(2+\alpha\eta)T+2\alpha R_{\max}SAT=(4+2\alpha\eta+2\alpha R_{\max}SA)T.
\end{align*}
Hence we have that by \eqref{final_dd_diff_of_diff_bound} and \eqref{sharpness_bound}, 
\begin{align*}
\|\hat{d}^{k+1,\star}-d^{k+1,\star}\|_2^2&\leq \|\hat{d}^{k+1,\star}-\tilde{d}^{k+1}\|_2^2-\|d^{k+1,\star}-\tilde{d}^{k+1}\|_2^2\\
&=(\|\hat{d}^{k+1,\star}-\tilde{d}^{k+1}\|_2+\|d^{k+1,\star}-\tilde{d}^{k+1}\|_2)(\|\hat{d}^{k+1,\star}-\tilde{d}^{k+1}\|_2-\|d^{k+1,\star}-\tilde{d}^{k+1}\|_2)\\
&\leq (4+2\alpha\eta+2\alpha R_{\max}SA)T^2(T-1)\epsilon_2^k, 
\end{align*}
and hence together with \eqref{dd_diff_bound}, we have 
\begin{equation}\label{projection_approx_error}
\begin{split}
\|d^{k+1}-d^{k+1,\star}\|_2&\leq \|d^{k+1}-\hat{d}^{k+1,\star}\|_2+\|\hat{d}^{k+1,\star}-d^{k+1,\star}\|_2\\
&\leq \dfrac{T(T-1)}{2}\epsilon_2^k+T\sqrt{(4+2\alpha\eta+2\alpha R_{\max}SA)(T-1)\epsilon_2^k}.
\end{split}
\end{equation}

%Suppose on the contrary that $\|d^{k+1}-d^{k+1,*}\|_2> C\epsilon$. Then $\|d^{k+1,*}-\tilde{d}^{k+1}\|_2> \|d^{k+1}-\tilde{d}^{k+1}\|_2-C\epsilon$. But $|\|d^{k+1,*}-\tilde{d}^{k+1}\|_2-\|d^{k+1}-\tilde{d}^{k+1}\|_2|\leq C\epsilon$, which is a contradiction. XXX. Wrong here!

Define $\hat{F}^k_{\alpha,\eta}(d):=\texttt{Proj}_{\{x|A_{\hat{P}^k}x=b,x\geq0\}}(d-\alpha(\hat{c}^k+\eta d))$. Then Algorithm \ref{MF-OMI-FBS-Cons-Approx} can be represented compactly as $d^{k+1}=\hat{F}^k_{\alpha,\eta}(d^k)$. Now let $d^\star$ be a fixed-point of $F_{\alpha,\eta}$, which exists by Lemma \ref{MFNE_vs_FP} (applied to $\tilde{R}$ and $\tilde{P}$) and the existence of mean-field NE (as guaranteed by Assumption \ref{outstanding_assumptions_1}). Then we have
\begin{align*}
 &\|d^{k+1}-d^\star\|_2 = \left\|\hat{F}^k_{\alpha,\eta}(d^k)-F_{\alpha,\eta}(d^\star) \right\|_2 \\
 &\leq \left\|\texttt{Proj}_{\{x|A_{\hat{P}^k}x=b,x\geq 0\}}(d^k-\alpha(\hat{c}^k+\eta d^k))-\texttt{Proj}_{\{x|A_{\tilde{P}}x=b,x\geq 0\}}(d^k-\alpha(\hat{c}^k+\eta d^k))\right\|_2\\
 &\quad+\left\|\texttt{Proj}_{\{x|A_{\tilde{P}}x=b,x\geq 0\}}(d^k-\alpha(\hat{c}^k+\eta d^k))-\texttt{Proj}_{\{x|A_{\tilde{P}}x=b,x\geq 0\}}(d^k-\alpha(c(d^k)+\eta d^k))\right\|_2 \\
 &\quad +\left\|\texttt{Proj}_{\{x|A_{\tilde{P}}x=b,x\geq 0\}}(d^k-\alpha(c(d^k)+\eta d^k))-F_{\alpha,\eta}(d^\star)\right\|_2\\
 &= \|d^{k+1}-d^{k+1,\star}\|_2+\|F_{\alpha,\eta}(d^k)-F_{\alpha,\eta}(d^\star)\|_2\\
 &\quad +\left\|\texttt{Proj}_{\{x|A_{\tilde{P}}x=b,x\geq 0\}}(d^k-\alpha(\hat{c}^k+\eta d^k))-\texttt{Proj}_{\{x|A_{\tilde{P}}x=b,x\geq 0\}}(d^k-\alpha(c(d^k)+\eta d^k))\right\|_2\\
 &\leq \dfrac{T(T-1)}{2}\epsilon_2^k+T\sqrt{(4+2\alpha\eta+2\alpha R_{\max}SA)(T-1)\epsilon_2^k}\\
 &\quad+\sqrt{(1-(\lambda+\eta)^2/(2C_R^2S^2A^2+2\eta^2))}\|d^k-d^\star\|_2+\alpha\epsilon_1^k,
\end{align*}
where the last step makes use of the contraction inequality \eqref{F_alpha_eta_contractive} of $F_{\alpha,\eta}$ proved in Theorem \ref{FBS_convergence} (applied to $\tilde{R}$ and $\tilde{P}$ here, which also satisfy Assumption \ref{outstanding_assumptions_1} and Assumption \ref{outstanding_assumptions_2} by the definitions of the modifications), \eqref{projection_approx_error} and the non-expansive property of projections. 

By telescoping over $k$, we conclude that 
\begin{align*}
\|d^k-d^\star\|_2&\leq (1-\kappa_{\lambda,\eta})^{\frac{k}{2}}\|d^0-d^\star\|_2+\sum_{j=0}^{k-1}(1-\kappa_{\lambda,\eta})^{\frac{k-j-1}{2}}\tilde{\epsilon}^j\\
&\leq 2(1-\kappa_{\lambda,\eta})^{\frac{k}{2}}+\sum_{j=0}^{k-1}(1-\kappa_{\lambda,\eta})^{\frac{k-j-1}{2}}\tilde{\epsilon}^j,
\end{align*}
where $\kappa_{\lambda,\eta}:=(\lambda+\eta)^2/(2C_R^2S^2A^2+2\eta^2)\in(0,1)$ and \[
\tilde{\epsilon}^j:=\dfrac{T(T-1)}{2}\epsilon_2^j+T\sqrt{(4+2\alpha\eta+2\alpha R_{\max}SA)(T-1)\epsilon_2^j}+\alpha\epsilon_1^j.
\]

Finally, as in the end of the proof of Theorem \ref{FBS_convergence}, by defining $\hat{\tilde{R}}^{\eta}(s,a,L_t):=\tilde{R}_t(s,a,L_t)-\eta L_t(s,a)$, 
for any $\pi^k\in\texttt{Normalize}(d^k)$, let $\pi^\star\in\texttt{Normalize}(d^\star)$, then by Lemma \ref{MFNE_vs_FP} applied to $\tilde{R}$ and $\tilde{P}$, we have $\texttt{Expl}(\pi^\star;\hat{\tilde{R}}^\eta;\tilde{P})=0$, and hence by Lemma \ref{lipschitz_and_reward_perturb_of_mfg} applied to $\tilde{R},\tilde{P}$ and $\hat{\tilde{R}}^{\eta}$, we have 
\begin{equation}\label{FBS_convergence_approx_general}
\begin{split}
\texttt{Expl}(\pi^k;\tilde{R},\tilde{P})&\leq |\texttt{Expl}(\pi^k;\tilde{R},\tilde{P})-\texttt{Expl}(\pi^\star;\tilde{R},\tilde{P})| +|\texttt{Expl}(\pi^\star;\tilde{R},\tilde{P})-\texttt{Expl}(\pi^\star;\hat{\tilde{R}}^\eta,\tilde{P})|\\
&\leq (2TC_R+R_{\max})\|d^k-d^\star\|_1+2T\eta\\
&\leq 2T\eta+ \sqrt{SAT}(2{\color{black}T^2}C_R+R_{\max}{\color{black}T})\left(2\left(1-\kappa_{\lambda,\eta}\right)^{\frac{k}{2}}+\sum_{j=0}^{k-1}(1-\kappa_{\lambda,\eta})^{\frac{k-j-1}{2}}\tilde{\epsilon}^j\right).
\end{split}
\end{equation}
Finally, by noticing that $\texttt{Expl}(\pi^k)=\texttt{Expl}(\pi^k;R,P)=\texttt{Expl}(\pi^k;\tilde{R},\tilde{P})$ thanks to Lemma \ref{Pmod_equivalence}, the proof is complete by taking $\eta=\epsilon>0$ when $\lambda=0$ and $\eta=0$ when $\lambda>0$. %This completes the proof. 
%XXX. $\hat{R}^{\epsilon}$ redefined to be based on $\tilde{R}$.
%XXX. $x^{k+1}\leq c x^k+\epsilon$, then $x^k\leq c^k x^0 + \sum_{j=0}^{k-1}c^{k-j-1}\epsilon^j$. $cx^k+\epsilon^k\leq c^{k+1}x^0+\sum_{j=0}^{k-1}c^{k-j}\epsilon^j+\epsilon^k=c^{k+1}x^0+\sum_{j=0}^kc^{k-j}\epsilon^j$. 
\end{proof}

\subsection{Exploration and estimations}\label{explore_and_est}
We now introduce the third step towards the final online mean-field RL algorithm. In this section, we design a sampling and exploration scheme and the associated estimation procedures of rewards and transitions (with default modification) to compute estimates $\hat{c}^k$ and $\hat{P}^k$ and the associated estimation errors $\epsilon_1^k,\,\epsilon_2^k$ for Algorithm \ref{MF-OMI-FBS-Cons-Approx} in Theorem \ref{FBS_convergence_approx}. %introduced in the previous section. 

\paragraph{Sampling and exploration.} The main idea for exploration is to conduct $n_k$ independent rounds of sample collections in each iteration $k$ of Algorithm \ref{MF-OMI-FBS-Cons-Approx}, each time randomly selecting one agent for pure exploration, while having the remaining $N-1$ agents following the current policy $\pi^k\in\texttt{Normalize}(d^k)$. The algorithm for sampling and exploring at iteration $k$ is summarized in Algorithm \ref{alg:SampleExplore}.\footnote{A more precise notation for the trajectory states, actions and rewards would be $s_t^{i_0(k,l),k,l}$, $a_t^{i_0(k,l),k,l}$ and $r_t^{i_0(k,l),k,l}$, which would fully disambiguate the different episodes $l$ and iterations $k$. But since $k,l$ already appear in $i_0(k,l)$, we simplify them to be $s_t^{i_0(k,l)}$, $a_t^{i_0(k,l)}$ and $r_t^{i_0(k,l)}$ to facilitate the discussion below.\label{footnote_kl_notation}}

\begin{algorithm}[ht]
\caption{$\texttt{SampleExplore}(\pi^k,n_k,k)$}
\label{alg:SampleExplore}
\begin{algorithmic}[1]
\FOR{$l=0,1,\dots,n_k-1$ (independently)}
\STATE Sample agent $i_0(k,l)$ from $[N]$ uniformly. Let agent $i_0(k,l)$ take the exploration policy sequence $\pi_t^{\texttt{exp}}(s,a)=1/A$ ($s\in\mathcal{S}$, $a\in\mathcal{A}$, $t\in\mathcal{T}$), and the other $N-1$ agents all follow policy sequence $\pi^k$. Collect trajectory data $\{(s_t^{i_0(k,l)},a_t^{i_0(k,l)},r_t^{i_0(k,l)},s_{t+1}^{i_0(k,l)})\}_{t\in\mathcal{T}}$.
\ENDFOR 
\STATE \textbf{Output:} 
% Estimations $\hat{c}^k=\left[
% \begin{array}{c}
% -\hat{R}_0^k(\cdot,\cdot)\\
% \vdots\\
% -\hat{R}_T^k(\cdot,\cdot)
% \end{array}
% \right]\in\mathbb{R}^{SAT}$ and $\hat{P}^k$. %and $\hat{A}^k=A_{\hat{P}^k}$. %$\hat{c}^k$ and $\hat{A}^k$. 
$\{(s_t^{i_0(k,l)},a_t^{i_0(k,l)},r_t^{i_0(k,l)},s_{t+1}^{i_0(k,l)})\}_{t\in\mathcal{T},l\in\{0,\dots,n_k-1\}}$.
\end{algorithmic}
\end{algorithm}

\paragraph{Estimations.} Given the collected trajectory data $\{(s_t^{i_0(j,l)},a_t^{i_0(j,l)},r_t^{i_0(j,l)},s_{t+1}^{i_0(j,l)})\}_{t\in\mathcal{T},l\in\{0,\dots,n_j-1\},j\leq k}$ from Algorithm \ref{alg:SampleExplore} up to iteration $k$, the estimated rewards and transition probabilities are then computed via (conditional) sample mean estimations based on the collected trajectories, with default values ($0$ for rewards and $p_0$ for transitions, as stated in Lemma \ref{Pmod_equivalence}) used for states and actions that are not visited by the trajectories.  More concretely, we compute estimated rewards and transitions at iteration $k$ as follows.

%     \begin{align}
% \hat{P}_t^k(s'|s,a))&:=\dfrac{1}{\sum_{j\leq k}n_j(s,a,t)}\sum_{j\leq k}\sum_{l=0}^{n_j-1}{\bf 1}\{(s_t^{i_0(j,l)},a_t^{i_0(j,l)},s_{t+1}^{i_0(j,l)})=(s,a,s')\}. \label{P_est}
% \end{align}
% If $\sum_{j\leq k}n_j(s,a,t)=0$, $\hat{P}_t^k(s'|s,a):=p_0(s'),$
For any $(s,a,t)\in\mathcal{S}\times\mathcal{A}\times\mathcal{T}$,
\begin{equation}\label{R_est}
    \hat{R}_t^{k}(s,a)=
        \begin{cases}
            \dfrac{1}{n_{k}(s,a,t)}\sum_{l=0}^{n_k-1}r_t^{i_0(k,l)}{\bf 1}\{(s_t^{i_0(k,l)},a_t^{i_0(k,l)})=(s,a)\}, &\text{ if } n_k(s,a,t)>0,\\
            0,&\text{ if } n_k(s,a,t)=0.
        \end{cases}
    \end{equation}
% If $n_k(s,a,t)=0$,
%     \begin{align}
%     \hat{R}_t
% ^{k}(s,a)&:=0, \label{R_est0}\\
% \hat{R}_t
% ^{k}(s,a)&:=\dfrac{1}{n_{k}(s,a,t)}\sum_{l=0}^{n_k-1}r_t^{i_0(k,l)}{\bf 1}\{(s_t^{i_0(k,l)},a_t^{i_0(k,l)})=(s,a)\},\label{R_est}
% \end{align}

For any $(s,a,s',t)\in\mathcal{S}\times\mathcal{A}\times\mathcal{S}\times\mathcal{T}$, 
    \begin{equation}\label{P_est}
    \hat{P}_t^k(s'|s,a))=
        \begin{cases}
            \dfrac{\sum_{j\leq k}\sum_{l=0}^{n_j-1}{\bf 1}\{(s_t^{i_0(j,l)},a_t^{i_0(j,l)},s_{t+1}^{i_0(j,l)})=(s,a,s')\}}{\sum_{j\leq k}n_j(s,a,t)}, &\text{ if } \sum_{j\leq k}n_j(s,a,t)>0,\\
            p_0(s'),&\text{ if } \sum_{j\leq k}n_j(s,a,t)=0.
        \end{cases}
    \end{equation}
    Here $n_k(s,a,t)=\sum_{l=0}^{n_k-1}{\bf 1}\{s_t^{i_0(k,l)}=s,a_t^{i_0(k,l)}=a\}$.
%where $p_0\in\Delta(\mathcal{S})$ is some fixed probability distribution on $\mathcal{S}$.
%induced from the current iteration. 

Note that since the transition models do not depend on the mean-field terms $d_t^k$ which vary over iterations, we collect all sample trajectories in the history (instead of only for iteration $k$ as in the case of rewards estimations) for the estimations of the transition models. 

Finally, we concatenate the estimations into 
\begin{equation}\label{c_and_P_est}
    \hat{c}^k=\left[
\begin{array}{c}
-\hat{R}_0^k(\cdot,\cdot)\\
\vdots\\
-\hat{R}_T^k(\cdot,\cdot)
\end{array}
\right]\in\mathbb{R}^{SAT} \text{ and } \hat{P}^k
\end{equation}
for use in iteration $k$ of Algorithm \ref{MF-OMI-FBS-Cons-Approx}.

\paragraph{{\color{black}Statistical} estimation errors.} 
The following proposition establishes the statistical estimation errors of the reward estimations \eqref{R_est} and transition estimations \eqref{P_est} for each state, action and time step. We leave a remainder term $C_R\|d_t^k-L_t^{\pi^k}\|_1$ with $L_t^{\pi^k}=\Gamma(\pi^k;P)$ to better illustrate the components of the reward estimation errors. The explicit bound of the estimation errors is given in Corollary \ref{cor:epsilon_1_and_epsilon_2}. %XXX. State clearly that the following result is for Algorithm \ref{alg:SampleExplore}. Separate the estimation out as P estimations depend on all past iterations $k$.
%that the reward estimation error can be decomposed into the  additional error %Here $L_t^{\pi^k}=\Gamma(\pi^k;P)$ %XXX. rewards at $L_t^{\pi^k}$ not $d^k$? combine? 
%Then we have the following statement on the estimation errors. XXX. Double check if the following is for fixed $s,a,t$. XXX. Remember to add $\delta$ as an input to MF-OML as we need it for determining $n_k$? Or similar to $p_{\min}$, actually we just need to choose $n_k=k$ and then bound the first $O(\text{some $p_{\min}$ and $\delta$ related})$ episodes by a constant (just like the bad episodes stuff in conv reinforce)?
\begin{proposition}\label{prop:RP_est_error}
Suppose that Assumption \ref{outstanding_assumptions_1} holds. 
    For any $s\in\mathcal{S},a\in\mathcal{A},t\in\mathcal{T}$ and any $\delta>0$, if $n_k> \frac{\log(2/\delta)}{2p_{\min}^2}$, then with probability at least $1-(1+S)\delta$, the following two bounds hold:
    \begin{equation}\label{RP_est_error}
    \begin{split}
        \left|\hat{R}_t^k(s,a)-\tilde{R}_t(s,a,d_t^k)\right|&\leq \underbrace{C_RSA\left(\frac{1}{N}+\sqrt{\frac{\pi}{2N}}\right)}_{\text{mean-field approximation error}}+\underbrace{\sqrt{\frac{2R_{\max}^2\log(4/\delta)}{p_{\min}n_k-\sqrt{\log(2/\delta)n_k/2}}}}_{\text{concentration error}}+\underbrace{C_R\|d_t^k-L_t^{\pi^k}\|_1}_{\text{execution error}},\\
        |\hat{P}_t^k(s'|s,a)-\tilde{P}_t(s'|s,a)|&\leq \underbrace{\sqrt{\frac{2\log(4/\delta)}{p_{\min}\sum_{j\leq k}n_j-\sqrt{\log(2/\delta)\sum_{j\leq k}n_j/2}}}}_{\text{concentration error}}, \text{ for all } s'\in\mathcal{S}. %\notag
        \end{split}
    \end{equation}
    Here $L_t^{\pi^k}=\Gamma(\pi^k;P)$, $p_{\min}=\min_{s\notin \mathcal{S}_t^{\texttt{unreach}},\, t\in\mathcal{T}}d^{\pi^{\texttt{exp}}}_t(s,a)>0$, where $d^{\pi^{\texttt{exp}}}$ is the occupation measure under the pure exploration policy $\pi^{\texttt{exp}}$ with $\pi_t^{\texttt{exp}}(s,a)=1/A$ ($s\in\mathcal{S},\,a\in\mathcal{A},\,t\in\mathcal{T}$). 
\end{proposition}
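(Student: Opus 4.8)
The plan is to analyze the reward and transition estimators separately, reducing everything to Hoeffding-type concentration once the number of visits to each reachable triple $(s,a,t)$ is controlled from below. Throughout I focus on reachable states $s\notin\mathcal{S}_t^{\texttt{unreach}}$, since for unreachable $s$ the explorer never occupies $(s,a,t)$, the estimators in \eqref{R_est} and \eqref{P_est} default to $0$ and $p_0(\cdot)$, and both claimed bounds hold trivially (the left-hand sides vanish). The first observation I would use is that, because the transitions $P_t(\cdot\mid s,a)$ and the action sampling are independent across agents and do not depend on the mean field, the explorer $i_0(k,l)$ (a uniformly drawn agent following $\pi^{\texttt{exp}}$) has trajectory law independent of the $N-1$ followers and identical across rounds and iterations; hence in every round the probability that the explorer occupies $(s,a)$ at time $t$ equals $d_t^{\pi^{\texttt{exp}}}(s,a)\geq p_{\min}$. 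Consequently the visit count $n_k(s,a,t)$ is $\mathrm{Binomial}(n_k,d_t^{\pi^{\texttt{exp}}}(s,a))$, and a one-sided Hoeffding bound gives, with probability at least $1-\delta/2$, $n_k(s,a,t)\geq p_{\min}n_k-\sqrt{\log(2/\delta)n_k/2}$; the hypothesis $n_k>\log(2/\delta)/(2p_{\min}^2)$ is exactly what makes this lower bound strictly positive, so the sample-mean branches of the estimators are in force.

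For the reward estimator I decompose $\hat{R}_t^k(s,a)-\tilde{R}_t(s,a,d_t^k)$ into three pieces by inserting the conditional mean $\bar{R}_t^k(s,a):=\mathbb{E}[R_t(s,a,L_t^N)\mid (s_t^{i_0},a_t^{i_0})=(s,a)]$ and the mean-field value $R_t(s,a,L_t^{\pi^k})$: a concentration term $\hat{R}_t^k(s,a)-\bar{R}_t^k(s,a)$, a mean-field term $\bar{R}_t^k(s,a)-R_t(s,a,L_t^{\pi^k})$, and an execution term $R_t(s,a,L_t^{\pi^k})-R_t(s,a,d_t^k)$ (using $\tilde{R}_t=R_t$ on reachable states). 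The execution term is bounded by $C_R\|L_t^{\pi^k}-d_t^k\|_1$ directly from the Lipschitz continuity in Assumption \ref{outstanding_assumptions_1}, and is kept as the stated remainder. For the concentration term, conditioned on the set of rounds that visit $(s,a,t)$ the observed rewards are independent, lie in $[-R_{\max},R_{\max}]$, and share the common conditional mean $\bar{R}_t^k(s,a)$; applying Hoeffding for each fixed visit count and intersecting with the count lower bound yields, with probability at least $1-\delta/2$, $|\hat{R}_t^k(s,a)-\bar{R}_t^k(s,a)|\leq\sqrt{2R_{\max}^2\log(4/\delta)/(p_{\min}n_k-\sqrt{\log(2/\delta)n_k/2})}$, which is the stated concentration error. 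The mean-field term is bounded via Lipschitzness by $C_R\,\mathbb{E}[\|L_t^N-L_t^{\pi^k}\|_1\mid\text{visit}]$, and here I would invoke the same coupling used to prove Theorem \ref{n2mfg}: conditioning on the explorer's state-action leaves the $N-1$ followers' laws unchanged by independence, the explorer contributes at most $2/N$ to the $\ell_1$ deviation (the $O(1/N)$ piece), and the followers' empirical measure concentrates around $L_t^{\pi^k}$ with summed per-coordinate mean-absolute deviation at most $SA\sqrt{\pi/(2N)}$, giving the mean-field approximation error $C_RSA(1/N+\sqrt{\pi/(2N)})$.

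For the transition estimator the situation is simpler because $P_t(\cdot\mid s,a)$ carries no mean-field dependence, so samples may be pooled across all iterations $j\leq k$ and, conditioned on any visit to $(s,a,t)$, the realized next states are i.i.d.\ $\mathrm{Bernoulli}(P_t(s'\mid s,a))$ for each fixed $s'$. I would first lower bound the pooled count $\sum_{j\leq k}n_j(s,a,t)$ by $p_{\min}\sum_{j\leq k}n_j-\sqrt{\log(2/\delta)\sum_{j\leq k}n_j/2}$ (again using that the explorer's visit probability is the same and $\geq p_{\min}$ in every round and iteration, and that $\sum_{j\leq k}n_j\geq n_k$ keeps the bound positive), then apply Hoeffding to the indicators $\mathbf{1}\{s_{t+1}^{i_0}=s'\}$ and take a union bound over the $S$ values of $s'$, producing the claimed transition concentration error for all $s'$ simultaneously. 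Collecting the failure probabilities — at most $\delta/2+\delta/2=\delta$ for the reward bound and at most $\delta/2+S\cdot\delta/2\leq S\delta$ (for $S\geq 1$) for the transition bound — gives the overall guarantee $1-(1+S)\delta$.

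The main obstacle is the mean-field term: one must justify that conditioning on the explorer visiting $(s,a)$ at time $t$ does not perturb the distribution of the remaining followers (so that their empirical measure still concentrates around $L_t^{\pi^k}$), and must cleanly separate the deterministic $O(1/N)$ bias — coming from replacing one follower by the explorer and from dropping the explorer's own atom — from the $O(1/\sqrt{N})$ stochastic fluctuation of the followers; this is precisely the estimate underlying Theorem \ref{n2mfg}, which I would reuse. A secondary technical point is the random visit count in the reward concentration step, which I would handle by conditioning on the count, applying Hoeffding conditionally, and then intersecting with the count lower-bound event, rather than attempting to bound the ratio of sums defining $\hat{R}_t^k(s,a)$ directly.
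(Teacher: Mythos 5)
Your proposal is correct and follows essentially the same route as the paper's proof: restricting to reachable states, lower-bounding the visit counts via Hoeffding, splitting the reward error into the same concentration/mean-field/execution pieces (the paper's $J_1+J_2$ plus the Lipschitz step), handling the random visit count by conditional Hoeffding intersected with the count event, invoking the Theorem \ref{n2mfg} coupling for the mean-field term, and pooling samples across iterations for the transitions. The probability bookkeeping also matches the paper's $(1+S)\delta$ accounting.
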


\begin{remark}
    We briefly comment on the compositions of the reward estimation errors and transition estimation errors. Due to the dependency on the mean-field terms $d_t^k$, the reward estimation errors are combinations of mean-field approximation error similar to Theorem \ref{n2mfg}, concentration errors from the Hoeffding inequality, and execution errors resulting from the mapping from $d_t^k$ to $\pi^k$ via \texttt{Normalize}. The symmetry aggregation effect of the mean-field terms also necessitates novel conditioning techniques that are not needed in the classical MARL literature. In contrast, the transition estimation errors consists of merely the concentration errors since the transition model is independent of the mean-field terms.
\end{remark}

% Remember to set 0 for default values of $\hat{R}_t^k$ and a default transition matrix for $\hat{P}_t^k(\cdot|s,\cdot)$ for unreachable states $s\in\mathcal{S}$ by the uniform exploration policy. 

% Informal statement: For any $\delta>0$, $|\hat{R}_t^k(s,a)-\tilde{R}_t(s,a,L_t^{\pi^k})|\leq \dfrac{C}{p_{\min}\sqrt{n_k}}\log(1/\delta)$ and $|\hat{P}_t^k(s'|s,a)-\tilde{P}_t(s'|s,a)|\leq \dfrac{C}{p_{\min}\sqrt{\sum_{j\leq k}n_j}}\log(1/\delta)$ with probability at least $1-\delta$. Here $p_{\min}$ is the nonzero minimum of $d^{\pi^{\texttt{exp}}}$, which can be further bounded from below by minimum nonzero transitions probability, namely $\min_{t,s',s,a:P_t(s'|s,a)>0}P_t(s'|s,a)$. 

\begin{proof}
%XXX. Define $L_t^{\pi^k}=\Gamma(\pi^k;P)=\Gamma(\pi^k;\tilde{P})$ (where the equality is by Lemma \ref{Pmod_equivalence}). Define $L_t^{N,k,l}$. 

Let $L_t^{N,k,l}(s,a)=\dfrac{1}{N}\sum_{i\in[N]}{\bf 1}\{s_t^{i,k,l}=s,a_t^{i,k,l}=a\}$ ($s\in\mathcal{S},\,a\in\mathcal{A},\,t\in\mathcal{T}$), where $s_t^{i,k,l}$ and $a_t^{i,k,l}$ denote the state and action of agent $i\in[N]$ at time step $t\in\mathcal{T}$ in the $l$-th episode of trajectory collection in $\texttt{SampleExporeEstimation}(\pi^k,n_k,k)$. Particularly, recall from footnote \ref{footnote_kl_notation} that $s_t^{i_0(k,l),k,l}=s_t^{i_0(k,l)}$ and $a_t^{i_0(k,l),k,l}=a_t^{i_0(k,l)}$.  %Algorithm \ref{SampleExploreEstimate}.  

\medskip

\noindent\textit{\textbf{Part 1: Gap between $\hat{R}_t^k(s,a)$ and $\tilde{R}_t(s,a,d_t^k)$.}} We first bound the gap between $\hat{R}_t^k(s,a)$ and $\tilde{R}_t(s,a,L_t^{\pi^k})$. When $s\in\mathcal{S}_t^{\texttt{unreach}}$, $n_k(s,a,t)=0$ a.s., and hence both $\hat{R}_t^k(s,a)$ and $\tilde{R}_t(s,a,L_t^\pi)$ are zero a.s.. Thus we only need to bound the gap for $s\notin \mathcal{S}_t^{\texttt{unreach}}$. 
    
    We first provide a high probability bound for $n_k(s,a,t)$. Since $n_k(s,a,t)=\sum_{l=0}^{n_k-1}{\bf 1}\{s_t^{i_0(k,l)}=s,a_t^{i_0(k,l)}=a\}$, and since ${\bf 1}\{s_t^{i_0(k,l)}=s,a_t^{i_0(k,l)}=a\}$ ($l=0,\dots,n_k-1$) are $n_k$ i.i.d. random variables with a.s. bound $1$ and expectation $d_t^{\texttt{exp}}(s,a)$,  
    by Hoeffding's inequality we have for any $s\notin \mathcal{S}_t^{\texttt{unreach}}$ and $\epsilon>0$,
    \begin{align*}
         \mathbb{P}\left(n_k(s,a,t)\geq (p_{\min}-\epsilon)n_k\right)&\geq \mathbb{P}\left(n_k(s,a,t)\geq (d_t^{\pi^{\texttt{exp}}}(s,a)-\epsilon)n_k\right)\\
         &\geq 1-\exp\left(-2n_k\epsilon^2\right).
    \end{align*}
    Therefore, for any $\delta>0$ and any $s\notin \mathcal{S}_t^{\texttt{unreach}}$, with probability at least $1-\delta/2$,  $$n_k(s,a,t)\geq \left(p_{\min}-\sqrt{\frac{\log(2/\delta)}{2n_k}}\right)n_k>0$$ 
    since $n_k> \log(2/\delta)/(2p_{\min}^2)$.
    
    When $n_k(s,a,t)>0$, let $I_t^k(s,a)=\{l\in\{0,\dots,n_k-1\}:(s_t^{i_0(k,l)},a_t^{i_0(k,l)})=(s,a)\}$. Note that 
    % \begin{align*}
    %     &\mathbb{E}\left|\hat{R}_t^k(s,a)-\tilde{R}_t(s,a,L_t^\pi)\right|\\
    %     =&\mathbb{E}\left|\dfrac{1}{n_{k}(s,a,t)}\sum_{l=0}^{n_k-1}r_t^{i_0(k,l)}{\bf 1}\{(s_t^{i_0(k,l)},a_t^{i_0(k,l)})=(s,a)\}-\tilde{R}_t(s,a,L_t^\pi)\right|\\
    %     \leq &\mathbb{E}\left|\dfrac{1}{n_{k}(s,a,t)}\sum_{l=0}^{n_k-1}\left(r_t^{i_0(k,l)}-\tilde{R}_t(s,a,L_t^{N,k,l})\right){\bf 1}\{(s_t^{i_0(k,l)},a_t^{i_0(k,l)})=(s,a)\}\right|\\
    %     &\qquad+\mathbb{E}\left|\dfrac{1}{n_{k}(s,a,t)}\sum_{l=0}^{n_k-1}{\bf 1}\{(s_t^{i_0(k,l)},a_t^{i_0(k,l)})=(s,a)\}\left(\tilde{R}_t(s,a,L_t^{N,k,l})-\tilde{R}_t(s,a,L_t^{\pi})\right)\right|\\
    %     :=&J_1+J_2.
    % \end{align*}
    % Since for any $l=0,\dots,n_k-1$, $\mathbb{E}\|L_t^{N,k,l}-L_t^{\pi}\|_1\leq \sqrt{\dfrac{\pi}{2}}SA/\sqrt{N} + SA/N$. Then 
    % {\small
    % \begin{align}
    %     J_2= \mathbb{E}\left[\mathbb{E}\left[\dfrac{1}{n_{k}(s,a,t)}\sum_{l=0}^{n_k-1}{\bf 1}\{(s_t^{i_0(k,l)},a_t^{i_0(k,l)})=(s,a)\}\left(\tilde{R}_t(s,a,L_t^{N,k,l})-\tilde{R}_t(s,a,L_t^{\pi})\right)\Bigg|(s_t^{i_0(k,l)},a_t^{i_0(k,l)}), l=1,\dots,n_k-1\right]\right]
    % \end{align}
    % }
        \begin{align*}
        &\left|\hat{R}_t^k(s,a)-\tilde{R}_t(s,a,L_t^{\pi^k})\right|=\left|\dfrac{1}{n_{k}(s,a,t)}\sum_{l\in I_t^k(s,a)}r_t^{i_0(k,l)}-\tilde{R}_t(s,a,L_t^{\pi^k})\right|\\
        &\leq \underbrace{\left|\dfrac{1}{n_{k}(s,a,t)}\sum_{l\in I_t^k(s,a)}\left(r_t^{i_0(k,l)}-\mathbb{E}\left[\tilde{R}_t(s,a,L_t^{N,k,l})|I_t^k(s,a)\right]\right)\right|}_{J_1}\\
        &\quad+\underbrace{\left|\dfrac{1}{n_{k}(s,a,t)}\sum_{l\in I_t^k(s,a)}\left(\mathbb{E}\left[\tilde{R}_t(s,a,L_t^{N,k,l})|I_t^k(s,a)\right]-\tilde{R}_t(s,a,L_t^{\pi^k})\right)\right|}_{J_2}\\
        &:=J_1+J_2.
    \end{align*}

    To bound $J_1$, notice that when $n_k(s,a,t)=|I_t^k(s,a)|>0$, conditioned on $I_t^k(s,a)$, we have that\footnote{This can be verified by the definition of independence via noticing that conditioning on $I_t^k(s,a)$ is equivalent to conditioning on $\{{\bf 1}\{(s_t^{i_0(k,l)},a_t^{i_0(k,l)})=(s,a)\}\}_{l=0}^{n_k-1}$. %{\color{red}XXX. Double check if this is correct and why?}
    } 
        $$r_t^{i_0(k,l)}-\mathbb{E}\left[\tilde{R}_t(s,a,L_t^{N,k,l})|I_t^k(s,a)\right] \quad(l\in I_t^k(s,a))$$ 
    %$n_k(s,a,t)$, 
    % $$r_t^{i_0(k,l)}-\mathbb{E}\left[\tilde{R}_t(s,a,L_t^{N,k,l})|n_k(s,a,t)\right] \quad(l\in I_t^k(s,a))$$ 
    are $n_k(s,a,t)$ bounded independent random variables with bound $2R_{\max}$ and conditional mean $0$. %$\left|r_t^{i_0(k,l)}-\mathbb{E}\left[\tilde{R}_t(s,a,L_t^{N,k,l})|I_t^k(s,a)\right]\right|\leq 2R_{\max}$ and $\mathbb{E}\left[r_t^{i_0(k,l)}-\mathbb{E}\left[\tilde{R}_t(s,a,L_t^{N,k,l})|I_t^k(s,a)\right]\Big|I_t^k(s,a)\right]=0$. 
    Hence Hoeffding inequality implies that
    $$
    \mathbb{P}\left(J_1\geq \epsilon|I_t^k(s,a)\right)\leq 2\exp\left(-\frac{n_k(s,a,t)\epsilon^2}{2R_{\max}^2}\right),
    $$
    and thus
    \begin{align*}
    \mathbb{P}(J_1\geq \epsilon|n_k(s,a,t))&=\mathbb{E}\left[\mathbb{E}\left[{\bf 1}\{J_1\geq \epsilon\}|I_t^k(s,a)\right]{\color{black}\Big|}n_k(s,a,t)\right]\\
    &=\mathbb{E}\left[\mathbb{P}(J_1\geq\epsilon|I_t^k(s,a){\color{black})}|n_k(s,a,t)\right]\leq 2\exp(-n_k(s,a,t)\epsilon^2/(2R_{\max}^2)).
    \end{align*}
    
    % \begin{align*}
    % \mathbb{P}(J_1\geq \epsilon|n_k(s,a,t))&=\mathbb{E}\left[\mathbb{E}\left[{\bf 1}\{J_1\geq \epsilon\}|I_t^k(s,a)\right]n_k(s,a,t)\right]\\
    % &=\mathbb{E}\left[\mathbb{P}(J_1\geq\epsilon|I_t^k(s,a)|n_k(s,a,t)\right]\leq 2\exp(-n_k(s,a,t)\epsilon^2/(2R_{\max}^2)).
    % \end{align*}
    Hence we have 
    \begin{align*}
        \mathbb{P}\left(J_1\geq \epsilon\right)&=\mathbb{P}\left(J_1\geq \epsilon,n_k(s,a,t)\geq\left(p_{\min}-\sqrt{\frac{\log(2/\delta)}{2n_k}}\right)n_k\right)\\
        &\qquad+\mathbb{P}\left(J_1\geq \epsilon,n_k(s,a,t)<\left(p_{\min}-\sqrt{\frac{\log(2/\delta)}{2n_k}}\right)n_k\right)\\
        &\leq\sum_{n=n_k^0}^\infty \mathbb{P}\left(J_1\geq \epsilon|n_k(s,a,t)=n\right)\mathbb{P}\left(n_k(s,a,t)=n\right)+\delta/2\\
        &\leq \sum_{n=n_k^0}^\infty 2\exp\left(-\frac{n\epsilon^2}{2R_{\max}^2}\right)\mathbb{P}\left(n_k(s,a,t)=n\right)+\delta/2 \\
        &\leq 2\exp\left(-\frac{n_k^0\epsilon^2}{2R_{\max}^2}\right)\mathbb{P}\left(n_k(s,a,t)\geq n_k^0\right)+\delta/2\\
        &\leq 2\exp\left(-\frac{\left(p_{\min}-\sqrt{\frac{\log(2/\delta)}{2n_k}}\right)n_k\epsilon^2}{2R_{\max}^2}\right)+\delta/2,
    \end{align*}  
    where $n_k^0=\lceil(p_{\min}-\sqrt{\frac{\log(2/\delta)}{2n_k}})n_k\rceil>0$. %(XXX. Change $n_0$ notation to avoid conflict with $n_k$.)
    % \begin{align*}
    %   \epsilon=\sqrt{\frac{2R_{\max}^2\log(4/\delta)}{p_{\min}n_k-\sqrt{\log(2/\delta)n_k/2}}}, 
    % \end{align*}
    Then with probability at least $1-\delta$, $J_1\leq \sqrt{\frac{2R_{\max}^2\log(4/\delta)}{p_{\min}n_k-\sqrt{\log(2/\delta)n_k/2}}}$.

    To bound $J_2$, we have that a.s.,
    \begin{align*}
        J_2&\leq \dfrac{1}{n_{k}(s,a,t)}\sum_{l\in I_t^k(s,a)}\left|\mathbb{E}\left[\tilde{R}_t(s,a,L_t^{N,k,l})\Big|I_t^k(s,a)\right]-\tilde{R}_t(s,a,L_t^{\pi^k})\right|\\
        &\leq\dfrac{C_R}{n_{k}(s,a,t)}\sum_{l\in I_t^k(s,a)}\mathbb{E}\left[\|L_t^{N,k,l}-L_t^{\pi^k}\|_1\Big|I_t^k(s,a)\right]
    \end{align*}

    Note that 
    \begin{align*}
        &\mathbb{E}\left[\left|L_t^{N,k,l}(s,a)-L_t^{\pi^k}(s,a)\right|\Big|I_t^k(s,a)\right]\\
        =&\mathbb{E}\left[\mathbb{E}\left[\left|\frac{1}{N}\sum\nolimits_{j\in[N]}{\bf 1}\{s_t^{j,k,l}=s,a_t^{j,k,l}=a\}-L_t^{\pi^k}(s,a)\right|\Big|I_t^k(s,a),i_0(k,l)\right]\Big|I_t^k(s,a)\right]\\
        \leq & \frac{1}{N}+\sqrt{\frac{\pi}{2N}}.
    \end{align*}
Here we reuse the trick in the proof of Theorem \ref{n2mfg}: Conditioned on $I_t^k(s,a)$ and $i_0(k,l)$, ${\bf 1}\{s_t^{j,k,l}=s,a_t^{j,k,l}=a\}$ ($j\in[N]\backslash\{i_0(k,l)\}$) are bounded independent random variables with 
%{\color{red}XXX. Double check if this is correct?}
\begin{align*}
&\left|\mathbb{E}\left[\frac{1}{N}\sum\nolimits_{j\in[N]\backslash\{i_0(k,l)\}}{\bf 1}\{s_t^{j,k,l}=s,a_t^{j,k,l}=a\}\Big|I_t^k(s,a),i_0(k,l)\right]-L_t^{\pi^k}(s,a)\right|\leq 1/N. %\\
%&=\left|\dfrac{1}{N}{\bf 1}\{s_t^{i_0(k,l)}=s,a_t^{i_0(k,l)}=a\}\right|\leq 1/N,
%=\Gamma(\pi^k;P).
\end{align*}
%(XXX. Emphasize this is under $P$ to match $L_t^{N,k,l}$ which is also under the ground truth original transition model $P$). (XXX. Shall we add a proof of this formally somewhere? Maybe as part of Theorem \ref{n2mfg}?)
Hence again by Hoeffding inequality as in the proof of Theorem \ref{n2mfg}, we have that a.s., $$J_2\leq C_RSA\left(\frac{1}{N}+\sqrt{\frac{\pi}{2N}}\right).$$

Combining the upper bound of $J_1$ and $J_2$, we have with probability at least $1-\delta$, $$\left|\hat{R}_t^k(s,a)-\tilde{R}_t(s,a,L_t^{\pi^k})\right|\leq C_RSA\left(\frac{1}{N}+\sqrt{\frac{\pi}{2N}}\right)+\sqrt{\frac{2R_{\max}^2\log(4/\delta)}{p_{\min}n_k-\sqrt{\log(2/\delta)n_k/2}}}.$$

And finally, since $|\tilde{R}_t(s,a,L_t^{\pi^k}-\tilde{R}_t(s,a,d_t^k)|\leq C_R\|L_t^{\pi^k}-d_t^k\|_1$ by Assumption \ref{outstanding_assumptions_1}, the proof for the rewards estimation errors is complete.
    
    % \begin{align*}
    %     J_2&= \mathbb{E}\left[\mathbb{E}\left[\left|\dfrac{1}{n_{k}(s,a,t)}\sum_{l\in I_t^k(s,a)}\left(\tilde{R}_t(s,a,L_t^{N,k,l})-\tilde{R}_t(s,a,L_t^{\pi})\right)\right|\Bigg|(s_t^{i_0(k,l)},a_t^{i_0(k,l)}), l=1,\dots,n_k-1\right]\right]\\
    %     &\leq \mathbb{E}\left[\mathbb{E}\left[\dfrac{1}{n_{k}(s,a,t)}\sum_{l\in I_t^k(s,a)}\left|\tilde{R}_t(s,a,L_t^{N,k,l})-\tilde{R}_t(s,a,L_t^{\pi})\right|\Bigg|(s_t^{i_0(k,l)},a_t^{i_0(k,l)}), l=1,\dots,n_k-1\right]\right]\\
    %     &=\mathbb{E}\left[\dfrac{1}{n_{k}(s,a,t)}\sum_{l=0}^{n_k-1}{\bf 1}\{l\in I_t^k(s,a)\}\mathbb{E}\left[\left|\tilde{R}_t(s,a,L_t^{N,k,l})-\tilde{R}_t(s,a,L_t^{\pi})\right|\Bigg|(s_t^{i_0(k,l)},a_t^{i_0(k,l)}), l=1,\dots,n_k-1\right]\right]\\
    %     &=\mathbb{E}\left[\dfrac{1}{n_{k}(s,a,t)}\sum_{l=0}^{n_k-1}{\bf 1}\{l\in I_t^k(s,a)\}\mathbb{E}\left[\left|\tilde{R}_t(s,a,L_t^{N,k,l})-\tilde{R}_t(s,a,L_t^{\pi})\right|\Bigg|s_t^{i_0(k,l)},a_t^{i_0(k,l)}\right]\right]\\
    %     &\leq C_R\mathbb{E}\left[\dfrac{1}{n_{k}(s,a,t)}\sum_{l=0}^{n_k-1}{\bf 1}\{l\in I_t^k(s,a)\}\mathbb{E}\left[\left\|L_t^{N,k,l}-,L_t^{\pi}\right\|_1\Big|s_t^{i_0(k,l)},a_t^{i_0(k,l)}\right]\right]\\
    %     &\leq C_R\left(\sqrt{\dfrac{\pi}{2}}SA/\sqrt{N} + SA/N\right),
    % \end{align*}
    % where we use the Lipschitz continuity of $\tilde R_t$ and for any $l=0,\dots,n_k-1$, {\color{red}XXX.} $$\mathbb{E}\left[\left\|L_t^{N,k,l}-,L_t^{\pi}\right\|_1\Big|s_t^{i_0(k,l)},a_t^{i_0(k,l)}\right]\leq \sqrt{\dfrac{\pi}{2}}SA/\sqrt{N} + SA/N.$$ 

\medskip

\noindent\textit{\textbf{Part 2: Gap between $\hat{P}_t^k(s'|s,a)$ and $\tilde{P}_t(s'|s,a)|$.}} Now we bound the gap $|\hat{P}_t^k(s'|s,a)-\tilde{P}_t(s'|s,a)|$. The major difference here is that since the transition models are mean-field independent, we can collect all samples in the history for the estimations. 
Similarly, if $s\in\mathcal{S}_t^{\texttt{unreach}}$, then we have a.s. $n_k(s,a,t)=0$, and hence $\hat{P}_t^k(s'|s,a)=p_0(s')=\tilde{P}_t(s'|s,a)$. Therefore we only need to consider $s\notin\mathcal{S}_t^{\texttt{unreach}}$.
We first obtain a similar high probability bound of $\sum_{j\leq k}n_j(s,a,t)$. Again by Hoeffding inequality, for any $\epsilon>0$
\begin{align*}
    \mathbb{P}\left(\sum_{j\leq k}n_j(s,a,t)\geq (p_{\min}-\epsilon)\sum_{j\leq k}n_j\right)&\geq \mathbb{P}\left(\sum_{j\leq k}n_j(s,a,t)\geq (d_t^{\pi^{\texttt{exp}}}(s,a)-\epsilon)\sum_{j\leq k}n_j\right)\\
         &\geq 1-\exp\left(-2\sum_{j\leq k}n_j\epsilon^2\right).
\end{align*}
Therefore, for any $\delta>0$ and any $s\notin \mathcal{S}_t^{\texttt{unreach}}$, with probability at least $1-\delta/2$, $$\sum_{j\leq k}n_j(s,a,t)\geq \tilde n_k^0:=\left(p_{\min}-\sqrt{\frac{\log(2/\delta)}{2\sum_{j\leq k}n_j}}\right)\sum_{j\leq k}n_j>0$$
since $\sum_{j\leq k}n_j\geq n_k>\log(2/\delta)/(2p_{\min}^2)$. 
Then when   $\sum_{j\leq k}n_j(s,a,t)>0$, for any $s'\in\mathcal{S}$
    \begin{align*}
        |\hat{P}_t^k(s'|s,a)-\tilde{P}_t(s'|s,a)|&=\left|\dfrac{1}{\sum_{j\leq k}n_j(s,a,t)}\sum_{j\leq k}\sum_{l=0}^{n_j-1}\left({\bf 1}\{(s_t^{i_0(j,l)},a_t^{i_0(j,l)},s_{t+1}^{i_0(j,l)})=(s,a,s')\}\right.\right.\\
        &\qquad\left.\left. -\tilde P_t(s'|s,a){\bf 1}\{(s_t^{i_0(j,l)},a_t^{i_0(j,l)})=(s,a)\}\right)\right|\\
        &=\left|\dfrac{1}{\sum_{j\leq k}n_j(s,a,t)}\sum_{(j,l)\in \tilde I^k_t(s,a)}\left({\bf 1}\{s_{t+1}^{i_0(j,l)}=s'\} -\tilde P_t(s'|s,a)\right)\right|,
    \end{align*}
where \[
\tilde I^k_t(s,a)=\{(j,l):j=1,\dots,k,l=0,\dots,n_j, (s_t^{i_0(j,l)},a_t^{i_0(j,l)})=(s,a)\}.
\]
Note that conditioned on $\tilde{I}_t^k(s,a)$, %$\sum_{j\leq k}n_j(s,a,t)$, 
\[{\bf 1}\{s_{t+1}^{i_0(j,l)}=s'\}-P_t(s'|s,a),\quad (j,l)\in\tilde I_t^k(s,a)
\]
are $\sum_{j\leq k}n_j(s,a,t)$ independent random variables with values in $[-1,1]$ and conditional expectation 0. Then similar to the bound of $J_1$ above, using Hoeffding inequality one can get 
\begin{align*}
    \mathbb{P}\left(|\hat{P}_t^k(s'|s,a)-\tilde{P}_t(s'|s,a)|\geq \epsilon\Bigg|\sum_{j\leq k}n_j(s,a,t)\right)\leq 2\exp\left(-\frac{1}{2}\epsilon^2\sum_{j\leq k}n_j(s,a,t)\right).
\end{align*}

Hence similarly
\begin{align*}
    \mathbb{P}\left(|\hat{P}_t^k(s'|s,a)-\tilde{P}_t(s'|s,a)|\geq \epsilon\right)
    &=\mathbb{P}\left(|\hat{P}_t^k(s'|s,a)-\tilde{P}_t(s'|s,a)|\geq \epsilon,\sum_{j\leq k}n_j(s,a,t)\geq \tilde n_k^0\right)\\
    &\qquad +\mathbb{P}\left(|\hat{P}_t^k(s'|s,a)-\tilde{P}_t(s'|s,a)|\geq \epsilon,\sum_{j\leq k}n_j(s,a,t)< \tilde n_k^0\right)\\
    &\leq 2\exp\left(-\frac{\tilde n_k^0\epsilon^2}{2}\right)+\delta/2.
\end{align*}
Therefore with probability at least $1-\delta$, 
$$
|\hat{P}_t^k(s'|s,a)-\tilde{P}_t(s'|s,a)|\leq \sqrt{\frac{2\log(4/\delta)}{p_{\min}\sum_{j\leq k}n_j-\sqrt{\log(2/\delta)\sum_{j\leq k}n_j/2}}}.
$$
Combining the two statements finishes the proof.
\end{proof}

\begin{corollary}\label{cor:epsilon_1_and_epsilon_2}
    For each $k>0$ and $\delta>0$, if $n_k> \frac{2\log(2/\delta)}{p_{\min}^2}$, then with probability at least $1-(1+S)SAT\delta$, the sample estimations from \eqref{c_and_P_est} satisfy the following bounds: 
    \[
\|\hat{c}^k-c_{\tilde{R}}(d^k)\|_2\leq \epsilon_1^k,\quad  \max_{s\in\mathcal{S},a\in\mathcal{A},t=0,\dots,T-2}\sum_{s'\in\mathcal{S}}|\hat{P}_t^k(s'|s,a)-\tilde{P}_t(s'|s,a)|\leq \epsilon_2^k,
\]
where 
\begin{align}
    \epsilon_1^k=&\sqrt{SAT}\left(C_RSA\left(\frac{1}{N}+\sqrt{\frac{\pi}{2N}}\right)+2\sqrt{\frac{R_{\max}^2\log(4/\delta)}{p_{\min}n_k}}\right)+C_RS^2AT(T-1)\sqrt{\frac{\log(4/\delta)}{p_{\min}\sum_{j\leq k}n_j}},\label{eq:epsilon_1}\\
\epsilon_2^k=&2S\sqrt{\frac{\log(4/\delta)}{p_{\min}\sum_{j\leq k}n_j}}.\label{eq:epsilon_2}
\end{align}
\end{corollary}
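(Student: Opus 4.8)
The plan is to obtain both bounds by combining the per-coordinate guarantees of Proposition~\ref{prop:RP_est_error} across all $SAT$ triples $(s,a,t)$ via a union bound, and then aggregating the resulting coordinatewise errors into the two claimed norms. First I would invoke Proposition~\ref{prop:RP_est_error}: for each fixed $(s,a,t)$ its two displayed bounds hold jointly with probability at least $1-(1+S)\delta$, so a union bound over the $SAT$ triples shows that \emph{all} of them hold simultaneously with probability at least $1-(1+S)SAT\delta$, which is exactly the stated confidence level. The strengthened sample-size requirement $n_k>2\log(2/\delta)/p_{\min}^2$ (stronger than the threshold in the proposition) is what cleans up the concentration denominators: it gives $\sqrt{\log(2/\delta)n_k/2}\le p_{\min}n_k/2$, hence $p_{\min}n_k-\sqrt{\log(2/\delta)n_k/2}\ge p_{\min}n_k/2$, and likewise with $n_k$ replaced by $\sum_{j\le k}n_j\ge n_k$. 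Substituting these lower bounds into the square roots replaces the raw concentration terms of Proposition~\ref{prop:RP_est_error} by the cleaner $2\sqrt{R_{\max}^2\log(4/\delta)/(p_{\min}n_k)}$ and $2\sqrt{\log(4/\delta)/(p_{\min}\sum_{j\le k}n_j)}$.

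The bound $\epsilon_2^k$ in \eqref{eq:epsilon_2} is then immediate: for each fixed $(s,a,t)$ I sum the per-$s'$ transition guarantee over the $S$ states $s'\in\mathcal{S}$, which multiplies the cleaned concentration term by $S$ and yields precisely $2S\sqrt{\log(4/\delta)/(p_{\min}\sum_{j\le k}n_j)}$; since this value is uniform over $(s,a,t)$, the maximum over $(s,a,t)$ leaves it unchanged. For $\epsilon_1^k$ I would work with $\|\hat c^k-c_{\tilde R}(d^k)\|_2^2=\sum_{s,a,t}|\hat R_t^k(s,a)-\tilde R_t(s,a,d_t^k)|^2$, viewing the coordinatewise bound of Proposition~\ref{prop:RP_est_error} as a sum of three nonnegative pieces: a mean-field term $C_RSA(1/N+\sqrt{\pi/(2N)})$ and a concentration term $2\sqrt{R_{\max}^2\log(4/\delta)/(p_{\min}n_k)}$, both uniform across coordinates, plus an execution term $C_R\|d_t^k-L_t^{\pi^k}\|_1$. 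The triangle inequality in $\mathbb{R}^{SAT}$ bounds $\|\hat c^k-c_{\tilde R}(d^k)\|_2$ by the sum of the $\ell_2$ norms of the three bounding vectors; each uniform piece has $\ell_2$ norm equal to $\sqrt{SAT}$ times its value, which produces the first group of terms in \eqref{eq:epsilon_1}.

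The main work, and the only place I expect a genuine subtlety, is the execution term. Its $\ell_2$ aggregate is $C_R\sqrt{SA}\,\sqrt{\sum_{t}\|d_t^k-L_t^{\pi^k}\|_1^2}\le C_R\sqrt{SA}\,\|d^k-L^{\pi^k}\|_1$, using $\sum_t\Delta_t^2\le(\sum_t\Delta_t)^2$ for nonnegative $\Delta_t:=\|d_t^k-L_t^{\pi^k}\|_1$. To control $\|d^k-L^{\pi^k}\|_1$ I would exploit that the projection step of Algorithm~\ref{MF-OMI-FBS-Cons-Approx} produces a $d^k$ feasible for $A_{\hat P^{k-1}}d=b,\,d\ge0$, so by Lemma~\ref{consistency_recover} any $\pi^k\in\texttt{Normalize}(d^k)$ satisfies $\Gamma(\pi^k;\hat P^{k-1})=d^k$, while $L^{\pi^k}=\Gamma(\pi^k;P)=\Gamma(\pi^k;\tilde P)$ by Lemma~\ref{Pmod_equivalence}. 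Lemma~\ref{lipschitz_gamma_pi_P} then yields $\|d^k-L^{\pi^k}\|_1\le \tfrac{T(T-1)}{2}\max_{s,a,t}\sum_{s'}|\hat P_t^{k-1}(s'|s,a)-\tilde P_t(s'|s,a)|$, i.e. $\tfrac{T(T-1)}{2}$ times the transition estimation error already controlled in the previous step. Plugging in the cleaned transition concentration bound turns this into a term of order $T(T-1)S\sqrt{\log(4/\delta)/(p_{\min}\sum_j n_j)}$; multiplying by $C_R\sqrt{SA}$ and bounding $S^{3/2}\sqrt A\le S^2A$ gives the final term $C_RS^2AT(T-1)\sqrt{\log(4/\delta)/(p_{\min}\sum_{j\le k}n_j)}$ of \eqref{eq:epsilon_1}. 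The only care needed here is bookkeeping: matching the iteration index on the accumulated sample count $\sum_{j\le k}n_j$ against the transition model $\hat P^{k-1}$ actually used to form $d^k$, and absorbing the surplus $S,A,T$ factors into the deliberately loose constants of the stated bound.
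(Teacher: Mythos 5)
Your proposal is correct and follows essentially the same route as the paper's proof: a union bound over the $SAT$ triples in Proposition~\ref{prop:RP_est_error}, the strengthened condition $n_k>2\log(2/\delta)/p_{\min}^2$ to replace each denominator $p_{\min}n-\sqrt{\log(2/\delta)n/2}$ by $p_{\min}n/2$, summation over $s'$ for $\epsilon_2^k$, and control of the execution term via Lemmas~\ref{consistency_recover}, \ref{Pmod_equivalence} and \ref{lipschitz_gamma_pi_P} together with the already-established transition bound (the paper aggregates this term through $\|\cdot\|_2\le\|\cdot\|_1\le C_RSA\|d^k-L^{\pi^k}\|_1$ rather than your slightly tighter $C_R\sqrt{SA}$ vectorized bound, but both are absorbed by the stated constants). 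Your remark about matching the accumulated sample count against the transition model $\hat P^{k-1}$ actually used to form $d^k$ is apt --- the paper's proof writes $\hat P^k$ there and glosses over this off-by-one, which affects only constants given the polynomial growth of $n_k$.
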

\begin{proof}
By Proposition \ref{prop:RP_est_error}, for any $\delta>0$, if $n_k> \frac{\log(2/\delta)}{2p_{\min}^2}$, then with probability at least $1-(1+S)SAT\delta$, the inequalities in \eqref{RP_est_error} hold for all $s\in\mathcal{S}$,$a\in\mathcal{A}$, and $t\in\mathcal{T}$, which implies 

\begin{equation*}
\max_{s\in\mathcal{S},a\in\mathcal{A},t=0,\dots,T-2}\sum_{s'\in\mathcal{S}}|\hat{P}_t^k(s'|s,a)-\tilde{P}_t(s'|s,a)|
\leq S\sqrt{\frac{2\log(4/\delta)}{p_{\min}\sum_{j\leq k}n_j-\sqrt{\log(2/\delta)\sum_{j\leq k}n_j/2}}},
\end{equation*}
and 
\begin{equation*}
    \|\hat{c}^k-c_{\tilde{R}}(L^{\pi^k})\|_2\leq \sqrt{SAT}\left(C_RSA\left(\frac{1}{N}+\sqrt{\frac{\pi}{2N}}\right)+\sqrt{\frac{2R_{\max}^2\log(4/\delta)}{p_{\min}n_k-\sqrt{\log(2/\delta)n_k/2}}}\right).
\end{equation*}
In addition, when these inequalities hold, since Lemma \ref{Pmod_equivalence} implies that $L^{\pi^k}=\Gamma(\pi^k;P)=\Gamma(\pi^k;\tilde{P})$,  by Lemma \ref{consistency_recover} and Lemma \ref{lipschitz_gamma_pi_P}, %{\color{red}XXX. TODO. Double check this especially if index is off by one?}
\begin{align}\label{density_est_error}
\|d^k-L^{\pi^k}\|_1&\leq \dfrac{T(T-1)}{2}\max_{s\in\mathcal{S},a\in\mathcal{A},t=0,\dots,T-2}\sum_{s'\in\mathcal{S}}|\hat{P}_t^k(s'|s,a)-\tilde{P}_t(s'|s,a)|\\
&\leq \dfrac{ST(T-1)}{2}\sqrt{\frac{2\log(4/\delta)}{p_{\min}\sum_{j\leq k}n_j-\sqrt{\log(2/\delta)\sum_{j\leq k}n_j/2}}},\notag
\end{align}
thus by Lipschitz continuity,
\begin{align*}
    \|c_{\tilde{R}}(d^k)-c_{\tilde{R}}(L^{\pi^k})\|_2&\leq \|c_{\tilde{R}}(d^k)-c_{\tilde{R}}(L^{\pi^k})\|_1\leq C_RSA\|d^k-L^{\pi^k}\|_1\\
    &\leq\dfrac{C_RS^2AT(T-1)}{2}\sqrt{\frac{2\log(4/\delta)}{p_{\min}\sum_{j\leq k}n_j-\sqrt{\log(2/\delta)\sum_{j\leq k}n_j/2}}}.
\end{align*}

Combining all the statements above finishes the proof.

% \begin{align}\label{chat_final_error}
% \|\hat{c}^k-c_{\tilde{R}}(d^k)\|_2&\leq \|\hat{c}^k-c_{\tilde{R}}(L^{\pi^k})\|_2+\|c_{\tilde{R}}(d^k)-c_{\tilde{R}}(L^{\pi^k})\|_2 \notag\\
% &\leq \sqrt{SAT}\left(C_RSA\left(\frac{1}{N}+\sqrt{\frac{\pi}{2N}}\right)+\sqrt{\frac{2R_{\max}^2\log(4/\delta)}{p_{\min}n_k-\sqrt{\log(2/\delta)n_k/2}}}\right)\\
% &\quad+\dfrac{C_RS^2AT(T-1)}{2}\sqrt{\frac{2\log(4/\delta)}{p_{\min}\sum_{j\leq k}n_j-\sqrt{\log(2/\delta)\sum_{j\leq k}n_j/2}}},\notag
% \end{align}

\end{proof}

\subsection{\texttt{MF-OML} and regret analysis}\label{sec:regret}
In this section, we put together the ingredients from the previous sections into the final online mean-field RL algorithm, \texttt{MF-OML}. The algorithm plugs Algorithm \ref{alg:SampleExplore} and the associated estimation procedures into the iterations of Algorithm \ref{MF-OMI-FBS-Cons-Approx} that is applied to the rewards and transitions with default modifications. The final algorithm is summarized as Algorithm \ref{MF-OML}. 
%XXX. sample explore needs modification.
\begin{algorithm}[ht]
\caption{\texttt{MF-OML}: Single-Phase {\bf M}ean-{\bf F}ield {\bf O}ccupation-{\bf M}easure {\bf L}earning }
\label{MF-OML}
\begin{algorithmic}[1]
\STATE {\bfseries Input:}  $d^0\in(\Delta(\mathcal{S}\times\mathcal{A}))^T$,
 $\alpha>0$,  $\eta>0$, $\{n_k\}_{k\geq 0}$.
\STATE Compute $\pi^0\in\texttt{Normalize}(d^0)$. %Let all $N$ agents follow policy sequence $\pi^0$ and collect trajectory data $\{(s_t^{i,(0)},a_t^{i,(0)})\}_{i\in[N],\,t\in\mathcal{T}}$. Compute $d_t^0(s,a):=\sum_{i\in[N]}{\bf 1}\{s_t^{i,(0)}=s,a_t^{i,(0)}=a\}$, $(s,a,t)\in\mathcal{S}\times\mathcal{A}\times\mathcal{T}$.
%\FOR{$m=0,1,\dots$}
\FOR{$k=0, 1, \dots,$}
\STATE Collect $\{(s_t^{i_0(k,l)},a_t^{i_0(k,l)},r_t^{i_0(k,l)},s_{t+1}^{i_0(k,l)})\}_{t\in\mathcal{T},l\in\{0,\dots,n_k-1\}}=\texttt{SampleExplore}(\pi^k,n_k,k)$.
% \FOR{$l=0,1,\dots,n_k-1$}
% \STATE Sample agent $i_0(k,l)$ from $[N]$ uniformly. Let agent $i_0(k,l)$ take the exploration policy sequence $\pi_t^{\texttt{exp}}(s,a):=\texttt{Unif}(\mathcal{A})$ ($t\in\mathcal{T}$), and the other $N-1$ agents all following policy sequence $\pi^k$. Collect trajectory data $\{(s_t^{i_0(k,l)},a_t^{i_0(k,l)},r_t^{i_0(k,l)},s_{t+1}^{i_0(k,l)})\}_{t\in\mathcal{T}}$.
% \ENDFOR
%\STATE (Optional) Replace $d^{k}$ as the empirical distribution of the above trajectory (either excluding $i_0(k,l)$ or not). 
\STATE Compute estimated rewards and transitions from the collected data with \eqref{R_est} and \eqref{P_est}, and then construct $\hat{c}^k$ and $\hat{P}^k$ with \eqref{c_and_P_est}.   %as follows:
% \begin{align*}
%     \hat{R}_t
% ^{i,(k)}(s,a)&:=\dfrac{1}{n_{k,i}(s,a,t)}\sum_{j=0}^{n_k-1}r_t^{i,(k,j)}{\bf 1}\{(s_t^{i,(k,j)},a_t^{i,(k,j)})=(s,a)\},\, (s,a,t)\in\mathcal{S}\times\mathcal{A}\times\mathcal{T}\\
% \hat{P}_t^k(s'|s,a))&:=\dfrac{1}{n_k(s,a,t)}\sum_{i\in[N]}\sum_{j=0}^{n_k-1}{\bf 1}\{(s_t^{i,(k,j)},a_t^{i,(k,j)},s_{t+1}^{i,(k,j)})=(s,a,s')\},\, (s,a,s',t)\in\mathcal{S}\times\mathcal{A}\times\mathcal{S}\times\mathcal{T}
% \end{align*} Here $n_{k,i}(s,a,t):=\sum_{j=0}^{n_k-1}{\bf 1}\{s_t^{i,(k,j)}=s,a_t^{i,(k,j)}=a\}$, and $n_k(s,a,t)=\sum_{i\in[N]}n_{k,i}(s,a,t)$.
%\STATE Construct $\hat{c}$ and $\hat{A}$ by plugging in $\hat{R}$ and $\hat{P}$, respectively. 
%\STATE Compute $d^k=\Gamma(\pi^k;\hat{P})$. TODO: Introduce $\Gamma$ and consider replacing $L^\pi$ more explicitly by $\Gamma$. 
\STATE Update $\tilde{d}^{k+1}=d^k-\alpha(\hat{c}^k+\eta d^k)$.
\STATE Compute $d^{k+1}$ as the solution to the convex quadratic program:
\[
\begin{array}{llll}
\text{minimize} & \|d-\tilde{d}^{k+1}\|_2^2 & \text{subject to}& A_{\hat{P}^k}d=b,\,d\geq 0.
\end{array}
\]\vspace{-0.45cm}
\STATE Extract policy $\pi^{k+1}\in\texttt{Normalize}(d^{k+1})$. 
\ENDFOR 
%\ENDFOR
%\STATE \textbf{Output:} Phase final policy $\pi^K$.
\end{algorithmic}
\end{algorithm}

The following theorem establishes the regret bounds of \texttt{MF-OML} for both strongly Lasry-Lions monotone and (non-strongly) Lasry-Lions monotone settings. %{\color{red}XXX. Check $k=0$ case and the definitions of $\delta_k, n_k$, etc.?} 
\begin{theorem}\label{thm:mf_oml_regret}
    Suppose that Assumptions \ref{outstanding_assumptions_1} and \ref{outstanding_assumptions_2} hold. Then we have the following regret bounds for \texttt{MF-OML}.
    \begin{itemize}
        \item When $\lambda=0$, for a given number of episodes $M$, if we adopt  $\alpha=\eta/(2C_R^2S^2A^2+2\eta^2)$ with  $\eta=\max\{N^{-1/6},M^{-1/12}\}$ and $n_k=k^3$, then we have that with probability at least $1-\pi^2(1+S)SAT\delta/6$, \[
        \texttt{NashRegret}(M)=O\left(\frac{S^2A^2{\color{black}T^3}M}{N^{1/6}}+S^2A{\color{black}T^{9/2}}M^{11/12}(\log M)^{5/4}\sqrt{\log(1/\delta)}\right).
        \]%for all $M\geq 1$.
        \item When $\lambda>0$, if we adopt  $\alpha=\lambda/(2C_R^2S^2A^2)$, $\eta=0$, $n_k=k^3$, then we have that with probability at least $1-\pi^2(1+S)SAT\delta/6$, for all $M\geq 1$ we have \[
        \texttt{NashRegret}(M)=O\left(\frac{S^2A^2{\color{black}T^3}M}{\sqrt{N}}+S^2A{\color{black}T^{9/2}}M^{3/4}(\log M)^{5/4}\sqrt{\log(1/\delta)}\right).
        \]
        %for all $M\geq 1$. 
    \end{itemize}
\end{theorem}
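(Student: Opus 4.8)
The plan is to reduce $\texttt{NashRegret}(M)$ in \eqref{nashregret} to the per-iteration exploitability guarantee already proved for \texttt{MF-OMI-FBS-Approx} (Theorem \ref{FBS_convergence_approx}), and then to sum the resulting geometric-plus-noise bounds over episodes. I would first decompose the regret along the outer loop of Algorithm \ref{MF-OML}: throughout iteration $k$ the policy $\pi^k$ is held fixed over all $n_k$ episodes of $\texttt{SampleExplore}(\pi^k,n_k,k)$, in each of which one uniformly chosen agent follows $\pi^{\texttt{exp}}$ while the remaining $N-1$ agents follow $\pi^k$. Bounding the $\texttt{NashConv}$ of this near-symmetric profile by that of the fully symmetric profile $\pi^k$ costs only $\tfrac{1}{N}O(TR_{\max})$ for the single exploring agent (one term out of $N$ in the average) plus $O(C_R/N)$ shifts in the other agents' rewards (the empirical distribution moves by $O(1/N)$). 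Applying Theorem \ref{n2mfg} to $\pi^k$, viewed as an $\texttt{Expl}(\pi^k)$-NE of the MFG, then gives
\[
\texttt{NashConv}(\boldsymbol{\pi}^m)\le \texttt{Expl}(\pi^k)+2C_R\sqrt{\tfrac{\pi}{2}}\,\tfrac{SAT}{\sqrt N}+O\!\left(\tfrac{C_RSAT+TR_{\max}}{N}\right)
\]
for every episode $m$ in block $k$, whence $\texttt{NashRegret}(M)\le \sum_k n_k\,\texttt{Expl}(\pi^k)+O(SAT\,M/\sqrt N)$ modulo $O(M/N)$ terms.

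Next I would make all the estimation errors valid simultaneously: invoking Corollary \ref{cor:epsilon_1_and_epsilon_2} at iteration $k$ with confidence level $\delta_k=\delta/k^2$ and union-bounding over $k\ge1$ produces exactly the global failure probability $\pi^2(1+S)SAT\delta/6$ in the statement, at the price of replacing $\log(1/\delta)$ by $\log(1/\delta)+2\log k$ inside $\epsilon_1^k,\epsilon_2^k$ of \eqref{eq:epsilon_1} and \eqref{eq:epsilon_2}. On this event I feed $\epsilon_1^k,\epsilon_2^k$ into Theorem \ref{FBS_convergence_approx}, bounding $\texttt{Expl}(\pi^k)$ by $2T\eta$ plus a constant times $2(1-\kappa)^{k/2}+\sum_{j=0}^{k-1}(1-\kappa)^{(k-j-1)/2}\tilde\epsilon^j$, with $\kappa=\kappa_\eta\asymp\eta^2$ when $\lambda=0$ and $\kappa\asymp\lambda^2$ a fixed constant when $\lambda>0$. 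With $n_k=k^3$ we have $\sum_{j\le k}n_j\asymp k^4$, so \eqref{eq:epsilon_2} gives $\epsilon_2^k=\tilde O(k^{-2})$ and hence $\tilde\epsilon^k=\tilde O(k^{-1})$ (its leading piece being the $T\sqrt{(\cdots)\epsilon_2^k}$ term), while \eqref{eq:epsilon_1} additionally contributes a $k$-independent mean-field floor $\asymp \alpha\,SA\sqrt{SAT}/\sqrt N$ inside $\tilde\epsilon^k$.

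The core computation is summing these per-iteration bounds over episodes. For the dominant iterations $k\approx K\asymp M^{1/4}$ (using $M\asymp\sum_{k\le K}k^3\asymp K^4$), the geometric kernel in the convolution is effectively supported on a window of length $O(1/\kappa)$ about $j\approx k$, so the accumulated-noise term collapses to $\tilde O\big(1/(\kappa k)\big)$ from the decaying part of $\tilde\epsilon^j$ and to $\tilde O(\text{floor}/\kappa)$ from its constant floor. Hence $\texttt{Expl}(\pi^k)=\tilde O\big(\eta+(1-\kappa)^{k/2}+\tfrac{1}{\kappa k}+\tfrac{\text{floor}}{\kappa}\big)$, and weighting by $n_k=k^3$ yields
\[
\sum_k n_k\,\texttt{Expl}(\pi^k)=\tilde O\Big(\eta M+\tfrac{1}{\kappa}\!\sum_{k\le K}\!k^2+\tfrac{\text{floor}}{\kappa}\!\sum_{k\le K}\!k^3\Big)=\tilde O\Big(\eta M+\tfrac{M^{3/4}}{\kappa}+\tfrac{\alpha}{\kappa}\cdot\tfrac{M}{\sqrt N}\Big),
\]
since the geometric head $\sum_k k^3(1-\kappa)^{k/2}=O(1)$ and $\text{floor}\asymp\alpha/\sqrt N$ up to $\mathrm{poly}(S,A,T)$.

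It remains to specialize. When $\lambda>0$ and $\eta=0$, $\kappa$ and $\alpha$ are constants, so the three pieces above are $0$, $\tilde O(M^{3/4})$ and $\tilde O(M/\sqrt N)$, which together with the $O(SAT\,M/\sqrt N)$ from the first step and the restored $\mathrm{poly}(S,A,T)$ and logarithmic factors reproduce $\tilde O(M^{3/4}+N^{-1/2}M)$. When $\lambda=0$, using $\kappa_\eta\asymp\eta^2$ and $\alpha\asymp\eta$ the learning regret is $\tilde O\big(\eta M+M^{3/4}/\eta^2+M/(\eta\sqrt N)\big)$; I would then take $\eta=\max\{N^{-1/6},M^{-1/12}\}$. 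The two $\eta$-only terms $\eta M$ and $M^{3/4}/\eta^2$ balance at $\eta=M^{-1/12}$, both equalling $M^{11/12}$, while the lower bound $\eta\ge N^{-1/6}$ makes the amplified noise $M^{3/4}/\eta^2$ stay below $\eta M=N^{-1/6}M$ precisely in the regime $N\le M^{1/2}$ (with $M/(\eta\sqrt N)$ dominated throughout), and the $M^{11/12}$ branch dominating for $N>M^{1/2}$; this delivers $\tilde O(M^{11/12}+N^{-1/6}M)$. The main obstacle is exactly this last bookkeeping: tracking how the $\eta$-dependent rate $\kappa_\eta\asymp\eta^2$ amplifies both the decaying statistical noise and the non-vanishing mean-field floor through the convolution, and then choosing $\eta$ so that the four competing contributions collapse into the two advertised terms.
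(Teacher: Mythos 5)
Your proposal follows essentially the same route as the paper's proof: the same per-episode reduction of $\texttt{NashConv}(\boldsymbol{\pi}^{k,l})$ to $\texttt{Expl}(\pi^k)$ plus an $O(SAT/\sqrt{N})$ mean-field correction (packaged in the paper as Lemma \ref{lemma:deviation_one_policy_nashconv}), the same union bound over iterations with $\delta_k=\delta/k^2$, the same invocation of Theorem \ref{FBS_convergence_approx} with the estimation errors of Corollary \ref{cor:epsilon_1_and_epsilon_2}, the same geometric-convolution summation (the paper swaps the order of summation and bounds it by $\frac{2n_K}{\kappa}\sum_k\tilde\epsilon^k$, which matches your window argument up to a logarithmic factor), and the same final choice of $\eta$. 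The one step you gloss over is that Corollary \ref{cor:epsilon_1_and_epsilon_2} requires $n_k>2\log(2/\delta_k)/p_{\min}^2$, which fails for $k$ below a threshold $k_{\min}^{\delta}=O\bigl(p_{\min}^{-1}+(\log(1/\delta)/p_{\min}^2)^{1/3}\bigr)$; the paper handles those initial iterations with the trivial almost-sure bounds $\epsilon_1^k\le 2R_{\max}\sqrt{SAT}$ and $\epsilon_2^k\le 2$, which contribute only an absorbed additive $O\bigl((\log(1/\delta))^{1/3}\bigr)$ term to $\sum_k\tilde\epsilon^k$.
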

We need the following lemma to connect the strategy profile $\boldsymbol{\pi}^{k,l}=(\pi^k,\dots,\overbrace{\pi^{\texttt{exp}}}^{i_0(k,l)\text{-th}},\dots,\pi^k)$ executed in the $l$-th episode ($l=0,\dots,n_k-1$) of iteration $k$ of \texttt{MF-OML}  to the symmetric strategy profile $\boldsymbol{\pi}^{k}=(\pi^k,\dots,\pi^k)$. Here we fix/condition on iteration $k$ and the picked agent index $i_0(k,l)$ for the following lemma.
\begin{lemma}\label{lemma:deviation_one_policy_nashconv}
    Suppose that Assumption \ref{outstanding_assumptions_1} holds. Then we have \[
    |\texttt{NashConv}(\boldsymbol{\pi}^{k,l})-\texttt{Expl}(\pi^k)|\leq 2C_RSAT\sqrt{\dfrac{\pi}{2N}}+\dfrac{6C_RSAT}{N}.
    \]
\end{lemma}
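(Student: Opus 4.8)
The plan is to compare the asymmetric profile $\boldsymbol{\pi}^{k,l}$ directly against the mean-field exploitability by rewriting $\texttt{NashConv}$ as a difference of averages and matching each average to its mean-field counterpart at $L^{\pi^k}$. Concretely, I would write $\texttt{NashConv}(\boldsymbol{\pi}^{k,l})=\frac{1}{N}\sum_{i\in[N]}\max_{\tilde\pi^i\in\Pi}V^i(\dots,\tilde\pi^i,\dots)-\frac{1}{N}\sum_{i\in[N]}V^i(\boldsymbol{\pi}^{k,l})$ and keep in mind $\texttt{Expl}(\pi^k)=\max_{\pi'\in\Pi}V^{\pi'}(L^{\pi^k})-V^{\pi^k}(L^{\pi^k})$. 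The structural fact I would exploit is that the per-agent dynamics are decoupled, so the only channel by which one agent's policy affects another's payoff is the empirical distribution $L_t^N$; in particular every agent's state-action trajectory law is unaffected by which agent explores, and the whole argument reduces to controlling $\|L_t^N-L_t^{\pi^k}\|_1$.

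For the matching step I would reuse the concentration argument behind Theorem \ref{n2mfg}. Conditioning on the identity $i_0=i_0(k,l)$ of the exploring agent, the remaining $N-1$ agents act independently, with $N-2$ of them following $\pi^k$ and one following $\pi^{\texttt{exp}}$. By Hoeffding's inequality, exactly as in the proof of Theorem \ref{n2mfg} and Proposition \ref{prop:RP_est_error}, the expected per-coordinate deviation is at most $\frac{1}{N}+\sqrt{\frac{\pi}{2N}}$, so $\mathbb{E}\|L_t^N-L_t^{\pi^k}\|_1\le SA(\frac{1}{N}+\sqrt{\frac{\pi}{2N}})$ up to two additional $O(1/N)$ corrections: an agent's own contribution to $L_t^N$, and an extra bias because the single explorer shifts the population distribution by at most $2/N$ in $\ell_1$. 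Propagating this through the $C_R$-Lipschitz rewards over the $T$ steps, and through both the best-response value and the on-policy value (hence the factor of two), produces the leading term $2C_R SAT\sqrt{\pi/(2N)}$ together with $O(C_RSAT/N)$ remainders, precisely mirroring Theorem \ref{n2mfg}.

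Carrying out the two averages, every best-response average collapses to $\max_{\pi'}V^{\pi'}(L^{\pi^k})$, since the inner maximum is taken over an agent's own policy against a near-$L^{\pi^k}$ population irrespective of whether that agent explores. The on-policy average instead splits as $\frac{N-1}{N}V^{\pi^k}(L^{\pi^k})+\frac{1}{N}V^{\pi^{\texttt{exp}}}(L^{\pi^k})$ up to the same mean-field errors. Subtracting $\texttt{Expl}(\pi^k)$ then leaves the residual $\frac{1}{N}\left(V^{\pi^k}(L^{\pi^k})-V^{\pi^{\texttt{exp}}}(L^{\pi^k})\right)$, i.e. the value gap between $\pi^k$ and the exploration policy inside the single fixed MDP $\mathcal{M}(L^{\pi^k})$, which is $O(1/N)$. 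Collecting the concentration error, the self-contribution error, the explorer-bias error, and this residual yields the claimed bound $2C_RSAT\sqrt{\pi/(2N)}+O(C_RSAT/N)$.

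I expect the main obstacle to be the asymmetry introduced by the single exploring agent. Unlike the symmetric setting of Theorem \ref{n2mfg}, here I must (i) condition on $i_0$ and argue the concentration bounds hold uniformly over this random choice while tracking the explorer's contribution to every other agent's empirical distribution, and (ii) control the exploring agent's own suboptimality term, whose on-policy value $V^{\pi^{\texttt{exp}}}(L^{\pi^k})$ may differ from $V^{\pi^k}(L^{\pi^k})$ by an amount on the order of the cumulative-reward range, so that only its $1/N$ weight in the $\texttt{NashConv}$ average keeps the contribution small. The conditioning technique highlighted in the remark following Proposition \ref{prop:RP_est_error} is exactly what is needed to make step (i) rigorous.
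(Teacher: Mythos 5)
Your plan is the same as the paper's: the paper omits the proof, saying only that it is ``mostly identical to the proof of Theorem \ref{n2mfg} except for having at most two agents deviating from the policy $\pi^k$,'' and your argument --- condition on $i_0(k,l)$, rerun the Hoeffding concentration of $L_t^N$ around $L_t^{\pi^k}$, and charge the explorer an extra $2/N$ perturbation of the empirical measure in $\ell_1$ (two deviators in each best-response term, one in each on-policy term, which is exactly the accounting that produces $4C_RSAT/N+2C_RSAT/N=6C_RSAT/N$) --- is precisely that adaptation, carried out with the correct conditioning.

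There is, however, one place where your bookkeeping does not close. Your decomposition correctly isolates the residual $\tfrac{1}{N}\bigl(V^{\pi^k}(L^{\pi^k})-V^{\pi^{\texttt{exp}}}(L^{\pi^k})\bigr)$ coming from the explorer's own on-policy term in the $\texttt{NashConv}$ average, and in your final paragraph you correctly note that this gap is of the order of the cumulative reward range, i.e.\ $O(R_{\max}T/N)$. But in the ``collecting'' step you fold it into $O(C_RSAT/N)$, which is not justified: $C_R$ is the Lipschitz constant of the rewards in the mean-field argument and has no bearing on the value gap between two different policies evaluated in the same fixed induced MDP $\mathcal{M}(L^{\pi^k})$; that gap is controlled only by $R_{\max}T$. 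What your argument actually proves is a bound of the form $2C_RSAT\sqrt{\pi/(2N)}+6C_RSAT/N+O(R_{\max}T/N)$, and in doing so it surfaces the fact that the lemma's stated constant treats the explorer purely as a perturbation of the empirical measure and silently drops its own sub-optimality contribution. This extra $O(R_{\max}T/N)$ term is harmless for the downstream regret asymptotics in Theorem \ref{thm:mf_oml_regret}, but if you want your proof to deliver the bound exactly as stated you must either carry that term explicitly or justify why it can be omitted; as written, the last step asserts an absorption that does not hold.
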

The proof is mostly identical to the proof Theorem \ref{n2mfg} except for having at most two agents deviating from the policy $\pi^k$ in the analysis, and is hence omitted. %{\color{red}XXX. Double check two agents details.}

We are now ready to prove Theorem \ref{thm:mf_oml_regret}.
\begin{proof}[Proof of Theorem \ref{thm:mf_oml_regret}]
Once again, as in the proof of Theorem \ref{FBS_convergence} and Theorem \ref{FBS_convergence_approx}, we prove the regret bounds for generic $\lambda$ and $\eta$ with $\lambda+\eta>0$ and $\lambda\neq \eta$, and then specialize it to $\lambda=0,\,\eta>0$ and $\lambda>0,\,\eta=0$, resp. to derive the claimed conclusions. 

Let $\delta_k:=\delta/k^2$ for $k\geq 1$. By Corollary \ref{cor:epsilon_1_and_epsilon_2}, for any $k\geq 1$ with $n_k=k^3>\frac{2\log(2/\delta_k)}{p_{\min}^2}=\frac{2\log(2/\delta)+4\log k}{p_{\min}^2}$, we have that with probability at most $(1+S)SAT\delta_k$, the inequalities of rewards and transitions estimations in \eqref{eq:epsilon_1} and \eqref{eq:epsilon_2} (with $\delta$ replaced by $\delta_k$) will not all be satisfied. Note that since $\log k\leq k-1<k$ hold for any $k>0$, we have $\log k< k\leq \frac{p_{\min}^2}{8} k^3$ hold for any $k\geq 2\sqrt{2}/p_{\min}$. Hence for any 
\[k\geq k_{\min}^\delta:=\max\left\{2\sqrt{2}/p_{\min},\left(\frac{4\log(2/\delta)}{p_{\min}^2}\right)^{1/3}\right\},
\]
we have 
\[
\dfrac{2\log(2/\delta_k)}{p_{\min}^2}=\dfrac{2\log(2/\delta)+4\log k}{p_{\min}^2}< \dfrac{2\log(2/\delta)}{p_{\min}^2}+k^3/2\leq k^3.
\]

Hence by union bound and the fact that $\sum_{k=1}^{\infty}1/k^2=\pi^2/6$, for any $\delta>0$, we have that \eqref{eq:epsilon_1} and \eqref{eq:epsilon_2} (with $\delta$ replaced by $\delta_k$) hold for all $k\geq k_{\min}^{\delta}$  with probability at least $1-\pi^2(1+S)SAT\delta/6$. For $k<k_{\min}^\delta$, the following naive bounds hold automatically a.s. by the definitions of $\hat{c}^k$ and $\hat{P}^k$:
\[
\|\hat{c}^k-c_{\tilde{R}}(d^k)\|_2\leq 2R_{\max}\sqrt{SAT},\quad \max_{s\in\mathcal{S},a\in\mathcal{A},t=0,\dots,T-1}\sum_{s'\in\mathcal{S}}|\hat{P}_t^k(s'|s,a)-\tilde{P}_t(s'|s,a)|\leq 2.
\]

Hence by Lemma \ref{lemma:deviation_one_policy_nashconv}, Theorem \ref{FBS_convergence_approx} (\cf \eqref{FBS_convergence_approx_general}), Corollary \ref{cor:epsilon_1_and_epsilon_2} and Lemma \ref{Pmod_equivalence}, we have that with probability at least $1-\pi^2(1+S)SAT\delta/6$, for all $k\geq 0$,   
\begin{align*}
\texttt{NashConv}(\boldsymbol{\pi}^{k,l})&\leq \texttt{Expl}(\pi^k)+2C_R\sqrt{\dfrac{\pi}{2}}SAT/\sqrt{N}+6C_RSAT/N\\
&=\texttt{Expl}(\pi^k;\tilde{R},\tilde{P})+2C_R\sqrt{\dfrac{\pi}{2}}SAT/\sqrt{N}+6C_RSAT/N\\
&\leq 2C_R\sqrt{\dfrac{\pi}{2}}SAT/\sqrt{N}+6C_RSAT/N\\
&\quad+ 2T\eta+ \sqrt{SAT}(2{\color{black}T^2}C_R+R_{\max}{\color{black}T})\left(2\left(1-\kappa_{\lambda,\eta}\right)^{\frac{k}{2}}+\sum_{j=0}^{k-1}(1-\kappa_{\lambda,\eta})^{\frac{k-j-1}{2}}\tilde{\epsilon}^j\right), 
\end{align*}
where $\kappa_{\lambda,\eta}:=(\lambda+\eta)^2/(2C_R^2S^2A^2+2\eta^2)$,  $\tilde{\epsilon}^j:=\dfrac{T(T-1)}{2}\epsilon_2^j+T\sqrt{(4+2\alpha\eta+2\alpha R_{\max}SA)(T-1)\epsilon_2^j}+\alpha\epsilon_1^j$, $\alpha=(\lambda+\eta)/(2C_R^2S^2A^2+2\eta^2)$,  $\epsilon_1^k=\eqref{eq:epsilon_1}$ and $\epsilon_2^k=\eqref{eq:epsilon_2}$ (with $\delta$ replaced by $\delta_k$) for $k\geq k_{\min}^\delta$, and $\epsilon_1^k=2R_{\max}\sqrt{SAT}$ and $\epsilon_2^k=2$ otherwise. 
% \begin{align*}
% \epsilon_1^k&=\eqref{chat_final_error},\\
% %\dfrac{C_RST(T-1)}{2}\sqrt{\frac{2\log(4/\delta)}{p_{\min}\sum_{j\leq k}n_j-\sqrt{\log(2/\delta)\sum_{j\leq k}n_j/2}}}\\
% \epsilon_2^k&=S\sqrt{\frac{2\log(4/\delta)}{p_{\min}\sum_{j\leq k}n_j-\sqrt{\log(2/\delta)\sum_{j\leq k}n_j/2}}},
% \end{align*}

Since $1/(1-\sqrt{1-\kappa_{\lambda,\eta}})=(1+\sqrt{1-\kappa_{\lambda,\eta}})/\kappa_{\lambda,\eta}\leq 2/\kappa_{\lambda,\eta}$, we have by the fact that $n_k=k^3$ is increasing as $k$ grows, %(assuming implicitly that $n_k$ grows, which is a rule we always stick to)
\[
\sum_{k=0}^{K}n_k\sum_{j=0}^{k-1}(1-\kappa_{\lambda,\eta})^{\frac{k-j-1}{2}}\tilde{\epsilon}^j=\sum_{j=0}^{K-1}\tilde{\epsilon}^j\sum_{k= j+1}^{K}n_k(\sqrt{1-\kappa_{\lambda,\eta}})^{k-j-1}\leq \sum_{k=0}^{K-1}\tilde{\epsilon}^k\dfrac{2n_{K}}{\kappa_{\lambda,\eta}},
\]
and also $\sum_{k=0}^{K}n_k(\sqrt{1-\kappa_{\lambda,\eta}})^k=2n_{K}/\kappa_{\lambda,\eta}$. Hence %by defining $\boldsymbol{\pi}^k$ 
the Nash regret is bounded by 
\begin{align*}
\texttt{NashRegret}(M)&\leq \sum_{k=0}^K\sum_{l=0}^{n_k-1}\texttt{NashConv}(\boldsymbol{\pi}^{k,l})\\
&\leq \left(2C_RSAT\sqrt{\dfrac{\pi}{2N}}+\dfrac{6C_RSAT}{N}+2T\eta\right)\sum_{k=0}^Kn_k\\
&\quad+\sqrt{SAT}(2{\color{black}T^2}C_R+R_{\max}{\color{black}T})\left(2\sum_{k=0}^Kn_k\left(1-\kappa_{\lambda,\eta}\right)^{\frac{k}{2}}+\sum_{k=0}^Kn_k\sum_{j=0}^{k-1}(1-\kappa_{\lambda,\eta})^{\frac{k-j-1}{2}}\tilde{\epsilon}^j\right)\\
&= \left(2C_RSAT\sqrt{\dfrac{\pi}{2N}}+\dfrac{6C_RSAT}{N}+2T\eta\right)\dfrac{K^2(K+1)^2}{4}\\
&\quad + \dfrac{2\sqrt{SAT}K^3(2{\color{black}T^2}C_R+R_{\max}{\color{black}T})}{\kappa_{\lambda,\eta}}\left(2+\sum\nolimits_{k=0}^{K-1}\tilde{\epsilon}^k\right),
\end{align*}
where $K$ is the smallest integer with which $\sum_{k=0}^Kn_k\geq M$. This immediately implies that $\sum_{k=0}^{K-1}n_k=(K-1)^2K^2/4< M$, and hence $(K-1)^4< 4M$. Thus $K<(4M)^{1/4}+1\leq 2(4M)^{1/4}=2\sqrt{2}M^{1/4}$ for any $M\geq 1$ (since $(4M)^{1/4}>1$ for any $M\geq 1$), and similarly for any $M\geq 1$, we have
\[
\sum_{k=0}^Kn_k=\dfrac{K^2(K+1)^2}{4}<\dfrac{((4M)^{1/4}+1)^2((4M)^{1/4}+2)^2}{4}\leq \dfrac{(2(4M)^{1/4})^2\times (3(4M)^{1/4})^2}{4}=36M.
\]
%where we make use of the trivial fact that $(4M)^{1/4}> 1$ for any $M\geq 1$.

Hence it remains to bound $\sum_{k=0}^{K-1}\tilde{\epsilon}^k$, for which we have 
\begin{align*}
    &\sum_{k=0}^{K-1}\tilde{\epsilon}^k=\sum_{k=0}^{\lceil k_{\min}^\delta\rceil-1}\left(T(T-1)+2T\sqrt{(2+\alpha\eta+\alpha R_{\max}SA)(T-1)}+2\alpha R_{\max}\sqrt{SAT}\right)\\
    & +\alpha\sum_{k=\lceil k_{\min}^\delta\rceil}^{K-1}\left(\sqrt{SAT}\left(C_RSA\left(\frac{1}{N}+\sqrt{\frac{\pi}{2N}}\right)+2R_{\max}\sqrt{\frac{\log(4k^2/\delta)}{p_{\min}n_k}}\right)+C_RS^2AT(T-1)\sqrt{\frac{\log(4k^2/\delta)}{p_{\min}\sum_{j\leq k}n_j}}\right)\\
    & +\sum_{k=\lceil k_{\min}^{\delta}\rceil}^{K-1}\left(T(T-1)S\sqrt{\frac{\log(4k^2/\delta)}{p_{\min}\sum_{j\leq k}n_j}}+2T\sqrt{(2S+\alpha\eta S+\alpha R_{\max}S^2A)(T-1)}\left(\frac{\log(4k^2/\delta)}{p_{\min}\sum_{j\leq k}n_j}\right)^{1/4}\right)\\
    &\leq \left(T^2+2T^{3/2}\sqrt{2+\alpha\eta+\alpha R_{\max}SA}+2\alpha R_{\max}\sqrt{SAT}\right)\left(\max\left\{\frac{2\sqrt{2}}{p_{\min}},\left(\frac{4\log(2/\delta)}{p_{\min}^2}\right)^{1/3}\right\}+1\right)\\
    &+\alpha C_RS^{3/2}A^{3/2}\sqrt{T}K\left(\frac{1}{N}+\sqrt{\frac{\pi}{2N}}\right)+\dfrac{2\alpha R_{\max}\sqrt{SAT}}{\sqrt{p_{\min}}}\sum_{k=1}^{K-1}\frac{\sqrt{\log(4k^2/\delta)}}{k^{3/2}}+\dfrac{2\alpha C_RS^2AT^2}{\sqrt{p_{\min}}}\sum_{k=1}^{K-1}\frac{\sqrt{\log(4k^2/\delta)}}{k^2}\\
    &+\dfrac{2ST^2}{\sqrt{p_{\min}}}\sum_{k=1}^{K-1}\frac{\sqrt{\log(4k^2/\delta)}}{k^2}+\dfrac{2T^{3/2}\sqrt{4S+2\alpha\eta S+2\alpha R_{\max}S^2A}}{p_{\min}^{1/4}}\sum_{k=1}^{K-1}\frac{(\log(4k^2/\delta))^{1/4}}{k}\\
    &=O\left((T^2+\sqrt{SAT})(\log(1/\delta))^{1/3}+S^{3/2}A^{3/2}T^{1/2}\dfrac{M^{1/4}}{\sqrt{N}}\right.\\
&\qquad\qquad\left.+S^2AT^2\sqrt{\log(1/\delta)}+SA^{1/2}T^{3/2}(\log M)^{5/4}(\log(1/\delta))^{1/4}\right)
\end{align*}
%where $k_{\min}:=\left\{\sqrt{2}/p_{\min},\left(\frac{\log(2/\delta)}{p_{\min}^2}\right)^{1/3}\right\}$. %and we make use of the fact that $\alpha=(\lambda+\eta)/(2C_R^2S^2A^2+2\eta^2)<1$ for $\lambda+\eta>0$ with $\lambda\neq \eta$.

Finally, putting all these together, we have that
\begin{align*}
\texttt{NashRegret}(M)&\leq \left(72C_RSAT\sqrt{\dfrac{\pi}{2N}}+\dfrac{216C_RSAT}{N}+72T\eta\right)M\\
&\quad + \dfrac{2\sqrt{SAT}K^3(2{\color{black}T^2}C_R+R_{\max}{\color{black}T})}{\kappa_{\lambda,\eta}}\left(2+\sum\nolimits_{k=0}^{K-1}\tilde{\epsilon}^k\right)\\
&\leq \left(72C_RSAT\sqrt{\dfrac{\pi}{2N}}+\dfrac{216C_RSAT}{N}+72T\eta\right)M +\dfrac{32\sqrt{2SAT}M^{3/4}(2{\color{black}T^2}C_R+R_{\max}{\color{black}T})}{\kappa_{\lambda,\eta}}\\
&\quad + \dfrac{16\sqrt{2SAT}M^{3/4}(2{\color{black}T^2}C_R+R_{\max}{\color{black}T})}{\kappa_{\lambda,\eta}}\sum_{k=0}^{K-1}\tilde{\epsilon}^k\\
&=O\left(\left(\dfrac{S^2A^2{\color{black}T^3}}{\kappa_{\lambda,\eta}\sqrt{N}}+T\eta\right)M+\dfrac{S^2A{\color{black}T^{9/2}}}{\kappa_{\lambda,\eta}}M^{3/4}(\log M)^{5/4}\sqrt{\log(1/\delta)}\right).
\end{align*}
Finally, taking $\eta=\max\{N^{-1/6},M^{-1/12}\}$ when $\lambda=0$ and $\eta=0$ when $\lambda>0$ finishes the proof.
\end{proof}

\section{Additional technical proofs}
In this section, we provide some additional technical proofs of our results. 
\subsection{Proof of Theorem \ref{n2mfg}}\label{n2mfg_proof}
\begin{proof}
It suffices to show that for any policy sequence $\pi\in\Pi$, we have 
\begin{equation}\label{expl_vs_nashconv}
\left|\texttt{Expl}(\pi)-\texttt{NashConv}(\pi)\right|\leq 2C_R\sqrt{\dfrac{\pi}{2}}SAT/\sqrt{N} + 2C_RSAT/N.
\end{equation}
The key is to bound the differences between $L_t^\pi$ and $L_t^{N,\pi}$ for $t\in\mathcal{T}$, where $L_t^{N,\pi}$ denotes the empirical state-action distribution of the $N$-player game under the policy sequence $\pi$. Thanks to the decoupled dynamics $P(s_{t+1}^i|s_t^i,a_t^i)$ of the $N$-player game, the state-action trajectories $\{s_t^i,a_t^i\}_{t\in\mathcal{T}}$ (induced by $\pi$) are independent across $i\in[N]$. Moreover, if we denote $I_{s_0}:=\{i\in[N]|s_0^i=s_0\}$, then for any $s_0\in\mathcal{S}$,  the state-action trajectories $\{s_t^i,a_t^i\}_{t\in\mathcal{T}}$ are indeed i.i.d. across $i\in I_{s_0}$. 
%In addition, using $\mathbb{P}^{\pi}$ to denote the agent state-action trajectory probability induced by following policy sequence $\pi$, 
We now show by induction that for any $i\in[N]$ and $t\in\mathcal{T}$,
\begin{equation}\label{expected_empirical=mean_field}
\dfrac{1}{N}\sum_{i\in[N]}\mathbb{P}(s_t^i=s,a_t^i=a)=L_t^{\pi}(s,a),\qquad \forall s\in\mathcal{S},a\in\mathcal{A}.
\end{equation}
For $t=0$, equality \eqref{expected_empirical=mean_field} holds by the fact that $\mathbb{P}(s_0^i=s,a_0^i=a)=\pi_0(a|s)$ if $s=s_0^i$ and $\mathbb{P}(s_0^i=s,a_0^i=a)=0$ otherwise, which immediately leads to 
\[
\dfrac{1}{N}\sum_{i\in[N]}\mathbb{P}(s_0^i=s,a_0^i=a)=\dfrac{1}{N}\sum_{i\in[N]}{\bf 1}\{s_0^i=s\}\pi_0(a|s)=\mu_0^N(s)\pi_0(a|s)=L_0^\pi(s,a). 
\]
Now if equality \eqref{expected_empirical=mean_field} holds for $t$, then %{\color{red}XXX. The last line should be $t+1$ instead of $t$?}
\begin{align*}
%\begin{split}
L_{t+1}^{\pi}(s',a')&=\pi_{t+1}(a'|s')\sum_{s\in\mathcal{S},a\in\mathcal{A}}P_t(s'|s,a)\dfrac{1}{N}\sum_{i\in[N]}\mathbb{P}(s_t^i=s,a_t^i=a)\\
&=\dfrac{1}{N}\sum_{i\in[N]}\sum_{s\in\mathcal{S},a\in\mathcal{A}}\mathbb{P}(s_t^i=s,a_t^i=a)P_t(s_{t+1}^i=s'|s_t^i=s,a_t^i=a)\pi_{t+1}(a'|s')\\
&=\dfrac{1}{N}\sum_{i\in[N]}\mathbb{P}(s_{t+1}^i=s')\mathbb{P}(a_{t+1}^i=a'|s_{t+1}^i=s')\\
&=\dfrac{1}{N}\sum_{i\in[N]}\mathbb{P}(s_{t+1}^i=s',a_{t+1}^i=a').
%\end{split}
\end{align*}
This completes the induction. Now we are ready to bound the differences between $L_t^{\pi}$ and $L_t^{N,\pi}$ ($t\in\mathcal{T}$). Firstly, notice that by \eqref{expected_empirical=mean_field}, we have %\footnote{Hereafter we hide the .}
\begin{align*}
\mathbb{E}\left[L_t^{N,\pi}(s,a)\right]=\dfrac{1}{N}\sum_{i\in[N]}\mathbb{P}(s_t^i=s,a_t^i=a)=L_t^{\pi}(s,a),\qquad\forall s\in\mathcal{S},a\in\mathcal{A}.
\end{align*}
%In addition, since for any $s_0\in\mathcal{S}$, $\{s_t^i,a_t^i\}_{t\in\mathcal{T}}$ are indeed i.i.d. across $i\in I_{s_0}$, we can denote $L_t^{s_0,\pi}(s,a):=\mathbb{P}(s_t^i=s,a_t^i=a)$ for any $i\in I_{s_0}$. 
%Now by defining $N_{s_0}:=|I_{s_0}|$ ($s_0\in\mathcal{S}$), we also have $\forall s\in\mathcal{S},a\in\mathcal{A}$%\footnote{To be more precise}
%\begin{align*}
%\mathbb{E}\left[L_t^{N,\pi}(s,a)\right]&=\mathbb{E}\left[\dfrac{1}{N}\sum_{i\in[N]}{\bf 1}\{s_t^i=s,a_t^i=a\}\right]=\mathbb{E}\left[\dfrac{1}{N}\sum_{s_0\in\mathcal{S}}\sum_{i\in I_{s_0}}{\bf 1}\{s_t^i=s,a_t^i=a\}\right]\\
%&=\mathbb{E}\left[\sum_{s_0\in\mathcal{S},N_{s_0}>0}\dfrac{N_{s_0}}{N}\dfrac{1}{N_{s_0}}\sum_{i\in I_{s_0}}{\bf 1}\{s_t^i=s,a_t^i=a\}\right]\\
%&=\sum_{s_0\in\mathcal{S},N_{s_0}>0}\dfrac{N_{s_0}}{N}\dfrac{1}{N_{s_0}}\sum_{i\in I_{s_0}}\mathbb{P}(s_t^i=s,a_t^i=a)=\sum_{s_0\in\mathcal{S},N_{s_0}>0}\dfrac{N_{s_0}}{N}L_t^{s_0,\pi}(s,a). 
%\end{align*}
%By Hoeffding's inequality, we have that for any $s_0\in\mathcal{S}$ with $N_{s_0}>0$, 
%\[
%\mathbb{P}\left(\left|\dfrac{1}{N_{s_0}}\sum_{i\in I_{s_0}}{\bf 1}\{s_t^i=s,a_t^i=a\}-L_t^{s_0,\pi}(s,a)\right|\geq \epsilon\right)\leq 2e^{-2N_{s_0}\epsilon^2}.
%\]
By Hoeffding's inequality, we have that 
\[
\mathbb{P}\left(\left|\dfrac{1}{N}\sum_{i\in [N]}{\bf 1}\{s_t^i=s,a_t^i=a\}-L_t^{\pi}(s,a)\right|\geq \epsilon\right)\leq 2e^{-2N\epsilon^2}.
\]
Hence by the fact that $\mathbb{E}|X|=\int_0^{\infty}\mathbb{P}(|X|\geq\epsilon)d\epsilon$ for any random variable $X$, we have that 
\begin{align*}
&\mathbb{E}\left|L_t^{N,\pi}(s,a)-L_t^{\pi}(s,a)\right|\leq 2\int_0^{\infty}e^{-2N\epsilon^2}d\epsilon=\dfrac{1}{\sqrt{2N}}\int_{-\infty}^{\infty}e^{-x^2}dx=\sqrt{\dfrac{\pi}{2N}}.
\end{align*}
Hence we have
\begin{align*}
\mathbb{E}\left\|L_t^{N,\pi}-L_t^{\pi}\right\|_1&= \sum_{s\in\mathcal{S},a\in\mathcal{A}}\mathbb{E}\left|L_t^{N,\pi}(s,a)-L_t^{\pi}(s,a)\right|
%&\leq \sum_{s\in\mathcal{S},a\in\mathcal{A}}\sum_{s_0\in\mathcal{S},N_{s_0}>0}\dfrac{N_{s_0}}{N}\mathbb{E}\left|\dfrac{1}{N_{s_0}}\sum_{i\in I_{s_0}}{\bf 1}\{s_t^i=s,a_t^i=a\}-L_t^{s_0,\pi}(s,a)\right|\\
%\leq \sqrt{\dfrac{\pi}{2}}SA\dfrac{\sum_{s_0\in\mathcal{S}}\sqrt{N_{s_0}}}{N}
\leq \sqrt{\dfrac{\pi}{2}}SA/\sqrt{N}. 
\end{align*}

Similarly, let $L_t^{N,i,\pi^i,\pi}$ be the empirical state-action distribution of the $N$-player game when agent $i$ takes policy sequence $\pi^i$ while the other agents take policy sequence $\pi$. 
% Then due to the decoupled state dynamics, we see that the difference between $\mathbb{E}\left[L_t^{N,i,\pi^i,\pi}(s,a)\right]$ and $\mathbb{E}\left[L_t^{N,\pi}(s,a)\right]$ is at most $1/N$ for any $s\in\mathcal{S},a\in\mathcal{A},t\in\mathcal{T}$ (depending on the indicator difference of agent $i$'s state-action pair at time $t$). Hence we also have that for any $i\in[N]$ and $\pi^i\in\Pi$, 
% \begin{equation*}\label{one_deviate_mean_field_approx_error}
% \mathbb{E}\left\|L_t^{N,i,\pi^i,\pi}-L_t^{\pi}\right\|_1\leq \sqrt{\dfrac{\pi}{2}}SA/\sqrt{N} + {\color{red}2}SA/N.
% \end{equation*}
% {\color{red}XXX. Double check this $1/N$ argument here? XXX. 
Note that $\mathbb{E}|X-c|=\mathbb{E}|Y-c|$ if $X$ and $Y$ have the same distribution and $c$ is deterministic. Since $L_t^{N,i,\pi^i,\pi}(s,a)$ has the same distribution as $L_t^{N,\pi}(s,a)-\frac{1}{N}\mathbf{1}\{s_t^i=s,a_t^i=a\}+\frac{1}{N}\mathbf{1}\{(s_t^{i})'=s,(a_t^{i})'=a\}$, where $s_t^i$ and $a_t^i$ are  the state and action of the i-th agent taking policy $\pi$ in $L_t^{N,\pi}$, while $(s_t^{i})'$ and $(a_t^{i})'$ are the state and action of the i-th agent taking policy $\pi^i$ in $L_t^{N,i,\pi^i,\pi}$, we have 
\begin{align*}
\mathbb{E}\left\|L_t^{N,i,\pi^i,\pi}-L_t^{\pi}\right\|_1&=\mathbb{E}\sum_{s\in\mathcal{S},a\in\mathcal{A}}\left|L_t^{N,\pi}(s,a)-\frac{1}{N}\mathbf{1}\{s_t^i=s,a_t^i=a\}+\frac{1}{N}\mathbf{1}\{(s_t^{i})'=s,(a_t^{i})'=a\}-L_t^{\pi}(s,a)\right|\\
&\leq \mathbb{E}\|L_t^{N,\pi}-L_t^{\pi}\|_1+2SA/N\leq \sqrt{\dfrac{\pi}{2}}SA/\sqrt{N}+2SA/N.
\end{align*}

Next, we rewrite $\texttt{NashConv}$ and $\texttt{Expl}$ by utilizing the (empirical) mean-field flows. In fact, by the definitions and the occupation measure representation of (single-agent) MDP value functions, we have %and Proposition \ref{MDP_as_LP}, we have %otice that we have
\begin{align*}
\dfrac{1}{N}\sum_{i\in[N]}\max_{\pi^i\in\Pi}V^i(\pi^i,\pi)&=\dfrac{1}{N}\sum_{i\in[N]}\max_{\pi^i\in\Pi}\mathbb{E}_{\pi^i,\pi}\left[\sum_{t\in\mathcal{T}}r_t(s_t^i,a_t^i,L_t^{N,i,\pi^i,\pi})\right],\\
V^{\pi'}(L^{\pi})&=\mathbb{E}_{\pi'}\left[\sum_{t\in\mathcal{T}}r_t(s_t,a_t,L_t^\pi)\right]=\sum_{s\in\mathcal{S},a\in\mathcal{A},t\in\mathcal{T}}d_t^{\pi'}(s,a)R_t(s,a,L_t^{\pi})\\
&=\sum_{s_0\in\mathcal{S}}\mu_0^N(s_0)\sum_{s\in\mathcal{S},a\in\mathcal{A},t\in\mathcal{T}}d_t^{s_0,\pi'}(s,a)R_t(s,a,L_t^{\pi})\\
&=\sum_{s_0\in\mathcal{S}}\dfrac{1}{N}\sum_{i\in[N]}{\bf 1}\{s_0^i=s_0\}\sum_{s\in\mathcal{S},a\in\mathcal{A},t\in\mathcal{T}}d_t^{s_0,\pi'}(s,a)R_t(s,a,L_t^{\pi})\\
&=\dfrac{1}{N}\sum_{i\in[N]}\sum_{s\in\mathcal{S},a\in\mathcal{A},t\in\mathcal{T}}d_t^{s_0^i,\pi'}(s,a)R_t(s,a,L_t^{\pi}),
%&=\dfrac{1}{N}\sum_{i\in[N]}\max_{\pi^i\in\Pi}\mathbb{E}_{\{s_t^i,a_t^i\}_{t\in\mathcal{T}}\sim \pi^i}\left[\sum_{t\in\mathcal{T}}r_t(s_t^i,a_t^i,L_t^{N,i,\pi^i,\pi})\right].
%&=\dfrac{1}{N}\sum_{i\in[N]}\max_{\pi^i\in\Pi}\mathbb{E}_{\pi^i,\pi}\left[\sum_{s\in\mathcal{S},a\in\mathcal{A},t\in\mathcal{T}}r_t(s,a,L_t^{N,i,\pi^i,\pi}){\bf 1}\{s_t^i=s,a_t^i=a\}\right]\\
%&=\dfrac{1}{N}\sum_{i\in[N]}\max_{\pi^i\in\Pi}\mathbb{E}_{\pi^i,\pi}\left[\sum_{s\in\mathcal{S},a\in\mathcal{A},t\in\mathcal{T}}r_t(s,a,L_t^{N,i,\pi^i,\pi})\mathbb{P}(s_t^i=s,a_t^i=a)\right]
\end{align*}
where $\mathbb{E}_{\pi'}$ denotes the expectation over the trajectory $\{s_t,a_t\}_{t\in\mathcal{T}}$ resulted from taking policy sequence $\pi'$ in an MDP with  transitions $P_t(s_{t+1}|s_t,a_t)$ and initial state distribution $\mu_0^N$, and $d_t^{\pi'}(s,a):=\mathbb{P}(s_t=s,a_t=a)$ and $d_t^{s_0,\pi'}(s,a):=\mathbb{P}(s_t=s,a_t=a|s_0)$ denote the (initial-state conditioned) state-action occupation-measures of the aforementioned trajectory, with $d_t^{\pi'}(s,a)=\sum_{s_0\in\mathcal{S}}\mu_0^N(s_0)d_t^{s_0,\pi'}(s,a)$. %{\color{red}XXX. Do we really need to define $\mathbb{E}_{\pi'}$? And why we seem to not have defined $\mathbb{E}_{\pi^i,\pi}$? Because initially in NashConv def we are using this notation but then realized that in MF-OML analysis we need a general strategy profile as explorer + deviating agent i and others would lead to 3 types of policies instead of just 2 at least?}

Moreover, since $\sum_{s\in\mathcal{S},a\in\mathcal{A},t\in\mathcal{T}}d_t^{s_0^i,\pi'}(s,a)R_t(s,a,L_t^{\pi})$ is the expected value of taking policy $\pi'$ with initial state $s_0^i$ in the previously mentioned $L^\pi$-induced MDP, and from the dynamic programming theory of MDP we know that there exists a policy sequence $\pi^\star$ that achieves the optimal expected value for arbitrary initial states, we have that for this policy sequence $\pi^\star$, \[
\sum_{s\in\mathcal{S},a\in\mathcal{A},t\in\mathcal{T}}d_t^{s_0^i,\pi^\star}(s,a)R_t(s,a,L_t^{\pi})=\max_{\pi'\in\Pi}\sum_{s\in\mathcal{S},a\in\mathcal{A},t\in\mathcal{T}}d_t^{s_0^i,\pi'}(s,a)R_t(s,a,L_t^{\pi}),\quad \forall s_0^i\in\mathcal{S},
\]
which immediately implies that 
\[
\max_{\pi'\in\Pi}\sum_{i\in[N]}\sum_{s\in\mathcal{S},a\in\mathcal{A},t\in\mathcal{T}}d_t^{s_0^i,\pi'}(s,a)R_t(s,a,L_t^{\pi})=\sum_{i\in[N]}\max_{\pi^i\in\Pi}\sum_{s\in\mathcal{S},a\in\mathcal{A},t\in\mathcal{T}}d_t^{s_0^i,\pi^i}(s,a)R_t(s,a,L_t^{\pi})
\]

Hence we have 
\begin{align*}
&\left|\dfrac{1}{N}\sum_{i\in[N]}\max_{\pi^i\in\Pi}V^i(\pi^i,\pi)-\max_{\pi'\in\Pi}V^{\pi'}(L^{\pi})\right|\leq \left|\dfrac{1}{N}\sum_{i\in[N]}\max_{\pi^i\in\Pi}\mathbb{E}_{\pi^i,\pi}\left[\sum_{t\in\mathcal{T}}R_t(s_t^i,a_t^i,L_t^\pi)\right]-\max_{\pi'\in\Pi}V^{\pi'}(L^{\pi})\right|\\
&\qquad+\left|\dfrac{1}{N}\sum_{i\in[N]}\max_{\pi^i\in\Pi}\mathbb{E}_{\pi^i,\pi}\left[\sum_{t\in\mathcal{T}}\left(R_t(s_t^i,a_t^i,L_t^{N,i,\pi^i,\pi})-R_t(s_t^i,a_t^i,L_t^{\pi})\right)\right]\right|\\
&\leq \left|\dfrac{1}{N}\sum_{i\in[N]}\max_{\pi^i\in\Pi}\mathbb{E}_{\pi^i,\pi}\left[\sum_{t\in\mathcal{T}}R_t(s_t^i,a_t^i,L_t^\pi)\right]-\max_{\pi'\in\Pi}\dfrac{1}{N}\sum_{i\in[N]}\sum_{s\in\mathcal{S},a\in\mathcal{A},t\in\mathcal{T}}d_t^{s_0^i,\pi'}(s,a)R_t(s,a,L_t^{\pi})\right|\\
&\qquad + \dfrac{1}{N}\sum_{i\in[N]}\max_{\pi^i\in\Pi}C_R\sum_{t\in\mathcal{T}}\mathbb{E}\|L_t^{N,i,\pi^i,\pi}-L_t^\pi\|_1\\
&=\underbrace{\left|\dfrac{1}{N}\sum_{i\in[N]}\max_{\pi^i\in\Pi}\sum_{s\in\mathcal{S},a\in\mathcal{A},t\in\mathcal{T}}d_t^{s_0^i,\pi^i}(s,a)R_t(s,a,L_t^{\pi})-\dfrac{1}{N}\sum_{i\in[N]}\max_{\pi^i\in\Pi}\sum_{s\in\mathcal{S},a\in\mathcal{A},t\in\mathcal{T}}d_t^{s_0^i,\pi^i}(s,a)R_t(s,a,L_t^{\pi})\right|}_{=0}\\
&\qquad+C_RT\left(\sqrt{\dfrac{\pi}{2}}SA/\sqrt{N} + 2SA/N\right).
%&\qquad+\left|\dfrac{1}{N}\sum_{i\in[N]}\max_{\pi^i\in\Pi}\mathbb{E}_{\pi^i,\pi}\left[\sum_{t\in\mathcal{T}}r_t(s_t^i,a_t^i,L_t^{N,i,\pi^i,\pi})\right]-\dfrac{1}{N}\sum_{i\in[N]}\max_{\pi^i\in\Pi}\mathbb{E}_{\pi^i,\pi}\left[\sum_{t\in\mathcal{T}}r_t(s_t^i,a_t^i,L_t^{\pi})\right]\right|
\end{align*}
%XXX. Move max inside sum over i due to MDP single policy optimal for all initial states. 

Similarly, we have %XXX. Double check
\[
\left|\dfrac{1}{N}\sum_{i\in[N]}V^i(\pi,\pi)-V^{\pi}(L^{\pi})\right|\leq C_R\sqrt{\dfrac{\pi}{2}}SAT/\sqrt{N}. %+ C_RSAT/N.
\]

Finally, putting together the two mean-field flow approximation errors, we have 
\begin{align*}
\left|\texttt{Expl}(\pi)-\texttt{NashConv}(\pi)\right|&\leq 2C_R\sqrt{\dfrac{\pi}{2}}SAT/\sqrt{N} + 2C_RSAT/N.
\end{align*}
This completes the proof.
\end{proof}

\subsection{Proof of Theorem \ref{thm:mf-omi}}
\begin{proof}
The proof is a direct application of \cite[Lemma 3]{guo2024mf}, which states that if $\pi^\star\in\Pi$ is an NE of the MFG, then  $\exists d^\star\in\mathbb{R}^{SAT}$, such that $\pi^\star\in\texttt{Normalize}(d^\star)$ and that $d^\star$ solves the linear program which minimizes $c(d^\star)^\top d$ subject to $Ad=b$, $d\geq 0$; and conversely, if $d^\star$ solves the linear program which minimizes $c(d^\star)^\top d$ subject to $Ad=b$, $d\geq 0$, then any $\pi^\star\in\texttt{Normalize}(d^\star)$ is an NE of the original MFG. Hence it suffices to show that $d^\star$ solves the linear program which minimizes $c(d^\star)^\top d$ subject to $Ad=b$, $d\geq 0$ if and only if $0\in c(d^\star)+N_{\{x|Ax=b,x\geq 0\}}$. To see this, it suffices to observe that $d^\star$ solves the linear program which minimizes $c(d^\star)^\top d$ subject to $Ad=b$, $d\geq 0$ if and only if $Ad^\star=b,d^\star\geq 0$, and in addition, for any $d$ such that $Ad=b,d\geq 0$, we have $c(d^\star)^\top d\geq c(d^\star)^\top d^\star$. This is exactly $-c(d^\star)\in \mathcal{N}_{\{x|Ax=b,x\geq 0\}}(d^\star)$ by the definition of normal cones. 
\end{proof}

\subsection{Proof of Lemma \ref{consistency_recover}}\label{sec:proof-lemma-consistency_recover}
\begin{proof}
Firstly, by the definition of $A_{\hat{P}}$ and $b$, we have that 
\begin{align}\label{APx=b}
&\sum_{s\in\mathcal{S},a\in\mathcal{A}}x_t(s,a)P_t(s'|s,a)=\sum_{a'\in\mathcal{A}}x_{t+1}(s',a'),\quad \forall s'\in\mathcal{S},t\in\{0,\dots,T-2\},\\
&\sum_{a\in\mathcal{A}}x_0(s,a)=\mu_0(s),\quad \forall s\in\mathcal{S},\quad x_t(s,a)\geq 0,\quad \forall s\in\mathcal{S},a\in\mathcal{A},t\in\mathcal{T}.\notag
\end{align}
Then by the definition of $\Gamma$, for any $\pi\in\Pi$, if we denote $\hat{L}^{\pi}=\Gamma(\pi;\hat{P})$, we have 
\begin{align}\label{Gamma(pi;P)}
\hat{L}_0^\pi(s,a)&=\mu_0(s)\pi_0(a|s),\\
    \hat{L}_{t+1}^{\pi}(s',a')&=\pi_{t+1}(a'|s')\sum_{s\in\mathcal{S},a\in\mathcal{A}}P_t(s'|s,a)\hat{L}_t^\pi(s,a),\quad t\in\{0,\dots,T-2\}.\notag
\end{align}
And in addition, we also have by the definition of \texttt{Normalize} that \begin{align}\label{Normalize_formula}
\pi_t(a|s)\sum_{a'\in\mathcal{A}}x_t(s,a')=x_t(s,a), \quad \forall s\in\mathcal{S},a\in\mathcal{A},t\in\mathcal{T}.
\end{align}
Hence for $t=0$, we have by \eqref{APx=b}, \eqref{Gamma(pi;P)} and \eqref{Normalize_formula} that $\hat{L}_0^{\pi}(s,a)=\pi_0(a|s)\sum_{a'\in\mathcal{A}}x_0(s,a')=x_0(s,a)$ for any $s\in\mathcal{S},a\in\mathcal{A}$. Now suppose that we have $\hat{L}_t^{\pi}(s,a)=x_t(s,a)$ for any $s\in\mathcal{S},a\in\mathcal{A}$. Then for $t+1$, again we have by \eqref{APx=b}, \eqref{Gamma(pi;P)} and \eqref{Normalize_formula} that 
\[
\hat{L}_{t+1}^\pi(s',a')=\pi_{t+1}(a'|s')\sum_{s\in\mathcal{S},a\in\mathcal{A}}P_t(s'|s,a)x_t(s,a)=\pi_{t+1}(a'|s')\sum_{a''\in\mathcal{A}}x_{t+1}(s',a'')=x_{t+1}(s',a').
\]
Hence by induction, we have that $\Gamma(\pi;\hat{P})=\hat{L}^{\pi}=x$. 

Now we prove the second claim. For any $\pi\in\Pi$, if we again denote $\hat{L}^{\pi}=\Gamma(\pi;\hat{P})$, then by \eqref{APx=b} and \eqref{Gamma(pi;P)}, we have that
\begin{align*}
\sum_{a\in\mathcal{A}}\hat{L}_0^\pi(s,a)&=\sum_{a\in\mathcal{A}}\mu_0(s)\pi_0(a|s)=\mu_0(s),\\
    \sum_{a'\in\mathcal{A}}\hat{L}_{t+1}^{\pi}(s',a')&=\sum_{a'\in\mathcal{A}}\pi_{t+1}(a'|s')\sum_{s\in\mathcal{S},a\in\mathcal{A}}P_t(s'|s,a)\hat{L}_t^\pi(s,a)\\
    &=\sum_{s\in\mathcal{S},a\in\mathcal{A}}P_t(s'|s,a)\hat{L}_t^\pi(s,a),\qquad\qquad\qquad t\in\{0,\dots,T-2\},
\end{align*}
namely $A_{\hat{P}}\Gamma(\pi;\hat{P})=b$. Finally, the non-negativity of $\Gamma(\pi;\hat{P})$ is trivial by the non-negativity of $P$, $\mu_0$ and $\pi$ in \eqref{Gamma(pi;P)}. 
\end{proof}

\subsection{Proof of Lemma \ref{MFNE_vs_FP}}\label{sec:proof-lemma-MFNE_VS_FP}
\begin{proof}
%XXX. Unique fixed point is can just move the beginning of the proof of Theorem \ref{FBS_convergence} here.

Let $d$ be a fixed point of $F_{\alpha,\eta}$. Then by the fact that for a closed convex set $\mathcal{X}$ in some Euclidean space, $\texttt{Proj}_{\mathcal{X}}=(I+\alpha \mathcal{N}_{\mathcal{X}})^{-1}$ for any $\alpha>0$ (\cf \cite[\S 6.1]{ryu2016primer}), we have \[
d-\alpha (c(d)+\eta d)\in (I+\alpha\mathcal{N}_{\{x|Ax=b,x\geq 0\}})(d)=d+\alpha\mathcal{N}_{\{x|Ax=b,x\geq 0\}}(d),
\]
and hence
\begin{equation}\label{fp_to_inclusion}
-(c(d)+\eta d)\in \mathcal{N}_{\{x|Ax=b,x\geq 0\}}(d).
\end{equation}
%Since $\mathcal{N}_{\{x|Ax=b,x\geq 0\}}(d)$ is a cone, 
Hence for any $\beta>0$, we also have 
% \[
% -\beta(c(d)+\eta d)\in \beta\mathcal{N}_{\{x|Ax=b,x\geq 0\}}(d),
% \]
% and hence 
\[
d-\beta (c(d)+\eta d)\in d+\beta\mathcal{N}_{\{x|Ax=b,x\geq 0\}}(d)=(I+\beta\mathcal{N}_{\{x|Ax=b,x\geq 0\}})(d),
\]
and hence 
\[
\texttt{Proj}_{\{x|Ax=b,x\geq 0\}}(d-\beta(c(d)+\eta d))=d,
\]
namely $d$ is a fixed point of $F_{\beta,\eta}$. This shows that the set of fixed points of $F_{\alpha,\eta}$ is independent of $\alpha>0$ for a given $\eta\geq 0$.

Now suppose that $\pi\in\Pi$ is an NE of the $\eta$-perturbed MFG. Then by Theorem \ref{thm:mf-omi}, we have that 
\[
-\alpha(c(d)+\eta d)\in \alpha \mathcal{N}_{\{x|Ax=b,x\geq 0\}}(d),
\]
and hence 
\[
d-\alpha(c(d)+\eta d)\in (I+\alpha\mathcal{N}_{\{x|Ax=b,x\geq 0\}})(d),
\]
which, given that $\alpha>0$ and hence again $\texttt{Proj}_{\{x|Ax=b,x\geq 0\}}=(I+\alpha \mathcal{N}_{\{x|Ax=b,x\geq 0\}})^{-1}$ as explained above, implies that 
\[
\texttt{Proj}_{\{x|Ax=b,x\geq 0\}}(d-\alpha(c(d)+\eta d))=d,
\]
and hence $d$ is a fixed point of $F_{\alpha,\eta}$. 
%Lemma \ref{consistency_recover}, we have $Ad=b$  and $d\geq 0$ for $d=L^\pi=\Gamma(\pi;P)$.

Finally, suppose that $F_{\alpha,\eta}(d)=d$. Then by \eqref{fp_to_inclusion} and Theorem \ref{thm:mf-omi}, we immediately conclude that any $\pi\in\texttt{Normalize}(d)$ is an NE of the MFG with the $\eta$-perturbed rewards $\hat{r}_t^{\eta}$. In addition, by Lemma \ref{consistency_recover}, we also have $d=L^\pi=\Gamma(\pi;P)$ and $Ad=b,d\geq 0$.
%again by the relationship between projections and normal cone operators above, we have 
\end{proof}

\subsection{Proof of Corollary \ref{FBS_iteration_complexity}}\label{sec:proof-coro-FBS_iteration_complexity}
\begin{proof} We prove the results for $\lambda=0$ and $\lambda>0$ separately below.

\medskip

\noindent\textit{Case 1: $\lambda=0$.}  The choices of $\alpha$ and $\eta$ are simply replacing $\epsilon$ with $\epsilon/(4T)$ in Theorem \ref{FBS_convergence} in the case when $\lambda=0$, and hence we have 
\[
\texttt{Expl}(\pi^k)\leq \epsilon/2+ 2\sqrt{SAT}(2{\color{black}T^2}C_R+R_{\max}{\color{black}T})\left(1-\kappa_{\frac{\epsilon}{4T}}\right)^{\frac{k}{2}},
\]
where $\kappa_{\frac{\epsilon}{4T}}=\epsilon^2/\left(32C_R^2S^2A^2T^2+2\epsilon^2\right)$. Hence to achieve $\texttt{Expl}(\pi^k)\leq \epsilon$, it suffices to have 
\[
k\geq 2\log\dfrac{4\sqrt{SAT}(2{\color{black}T^2}C_R+R_{\max}{\color{black}T})}{\epsilon}/\log(1/(1-\kappa_{\frac{\epsilon}{4T}})).
\]
Since $\log(1-\kappa_{\frac{\epsilon}{4T}})\leq -\kappa_{\frac{\epsilon}{4T}}$ (as $\kappa_{\frac{\epsilon}{4T}}\in(0,1)$), it suffices to have 
\[
k\geq \dfrac{64C_R^2S^2A^2T^2+4\epsilon^2}{\epsilon^2}\times\log\dfrac{4\sqrt{SAT}(2{\color{black}T^2}C_R+R_{\max}{\color{black}T})}{\epsilon},
\]
namely $k=\Omega\left(\epsilon^{-2}\log(1/\epsilon)\right)$. %XXX. Double check the big-Omega notation and constants. 
 %   By Theorem \ref{FBS_convergence}, 
    %it suffices to have $$

\medskip

\noindent\textit{Case 2: $\lambda>0$.} By Theorem \ref{FBS_convergence}, we have that $\texttt{Expl}(\pi^k)\leq \epsilon$ if 
\[
2\sqrt{SAT}(2{\color{black}T^2}C_R+R_{\max}{\color{black}T})(1-\kappa)^{k/2}\leq \epsilon,
\]
which is equivalent to $k\geq \log\dfrac{2\sqrt{SAT}(2{\color{black}T^2}C_R+R_{\max}{\color{black}T})}{\epsilon}/\log(1/(1-\kappa))$. Hence to have $\texttt{Expl}(\pi^k)\leq \epsilon$, it suffices to have 
\[k\geq \log\dfrac{2\sqrt{SAT}(2{\color{black}T^2}C_R+R_{\max}{\color{black}T})}{\epsilon}/\kappa=\dfrac{2C_R^2S^2A^2}{\lambda^2}\log\dfrac{2\sqrt{SAT}(2{\color{black}T^2}C_R+R_{\max}{\color{black}T})}{\epsilon},
\]
namely $k=\Omega(\log(1/\epsilon))$.
\end{proof}

\subsection{Proof of Lemmas \ref{projection_d_vs_L} and \ref{lipschitz_gamma_pi_P}}\label{sec:proof-lemma-projection_d_vs_L}
\begin{proof}[Proof of Lemma \ref{projection_d_vs_L}]
By Lemma \ref{consistency_recover}, let $\bar{\pi}\in\texttt{Normalize}(\hat{d})$, then $\Gamma(\bar{\pi};\hat{P})=\hat{d}$. In addition, by the same lemma, we also have $A_{\hat{P}}\Gamma(\hat{\pi};\hat{P})=b,\Gamma(\hat{\pi};\hat{P})\geq 0$. Hence by the optimality $\hat{\pi}$, we have that  \[
\|\hat{d}-\tilde{d}\|_2=\|\Gamma(\bar{\pi};\hat{P})-\tilde{d}\|_2\geq \|\Gamma(\hat{\pi};\hat{P})-\tilde{d}\|_2,
\] and also by the optimality of $\hat{d}$ and the feasibility of $\Gamma(\hat{\pi};\hat{P})$ for the constraint $A_{\hat{P}}d=b,d\geq 0$, we have 
\[
\|\Gamma(\hat{\pi};\hat{P})-\tilde{d}\|_2\geq \|\hat{d}-\tilde{d}\|_2.
\]
Hence we have $\|\Gamma(\hat{\pi};\hat{P})-\tilde{d}\|_2= \|\hat{d}-\tilde{d}\|_2$. By the uniqueness of the solution to strongly convex optimization problem of minimizing $\|d-\tilde{d}\|_2^2$ over the set of $d$ with $A_{\hat{P}}d=b,d\geq 0$ (or by the uniqueness of projection onto a closed convex set), we conclude that $\hat{d}=\Gamma(\hat{\pi};\hat{P})$.
%XXX. The key is that $\hat{d}$ is unique while $\hat{\pi}$ is not. Shall we mention this in the statement as well?
\end{proof}
% \section{Conclusion and future work}
% XXX. Continuous-time?

\begin{proof}[Proof of Lemma \ref{lipschitz_gamma_pi_P}]
Let $\hat{L}^\pi=\Gamma(\pi;\hat{P})$ and $L^\pi=\Gamma(\pi;\tilde{P})$. Then by the definition of $\Gamma$, we have 
    \begin{align*}
    \left|\hat{L}_{t+1}^\pi(s',a')-L_{t+1}^\pi(s',a')\right|=\pi_{t+1}(a'|s')\left|\sum_{s\in\mathcal{S},a\in\mathcal{A}}\left(\hat{P}_t(s'|s,a)\hat{L}_t^{\pi}(s,a)-\tilde{P}_t(s'|s,a)L_t^{\pi}(s,a)\right)\right|,
    \end{align*}
and hence 
\begin{align*}
    \|\hat{L}_{t+1}^{\pi}-L_{t+1}^{\pi}\|_1&\leq \sum_{s'\in\mathcal{S},s\in\mathcal{S},a\in\mathcal{A}}\left(\left|\hat{P}_t(s'|s,a)-\tilde{P}_t(s'|s,a)\right|\hat{L}_t^{\pi}(s,a)+\left|\hat{L}_t^{\pi}(s,a)-L_t^{\pi}(s,a)\right|\tilde{P}_t(s'|s,a)\right)\\
    &\leq \max_{s\in\mathcal{S},a\in\mathcal{A}}\sum_{s'\in\mathcal{S}}|\hat{P}_t(s'|s,a)-\tilde{P}_t(s'|s,a)|+\|\hat{L}_t^{\pi}-L_t^{\pi}\|_1,
\end{align*}
which, together with the fact that $\hat{L}_0^{\pi}(s,a)=\mu_0(s)\pi_0(a|s)=L_0^{\pi}(s,a)$, implies that 
\[
    \|\hat{L}_t^{\pi}-L_t^{\pi}\|_1\leq t \max_{s\in\mathcal{S},a\in\mathcal{A},t=0,\dots,T-2}\sum_{s'\in\mathcal{S}}|\hat{P}_t(s'|s,a)-\tilde{P}_t(s'|s,a)|,
\]
and hence 
\[
\|\hat{L}^{\pi}-L^{\pi}\|_1\leq \dfrac{T(T-1)}{2}\max_{s\in\mathcal{S},a\in\mathcal{A},t=0,\dots,T-2}\sum_{s'\in\mathcal{S}}|\hat{P}_t(s'|s,a)-\tilde{P}_t(s'|s,a)|.
\]
This completes the proof.
\end{proof}

% \section{Implementation and experiments}
% \subsection{Implementation of MF-OMI and MF-OML}
% \paragraph{Rewriting consistency projection for OSQP interface.}

% Precision of OSQP solution.

% \paragraph{Mean-field independent P.} 

% \paragraph{Choice of $n_k$.}  Parallel sampling of for backtesting?
 
% \paragraph{Speed-up compared to MF-OMO?}

% \paragraph{Choice of $p_0$ and $\hat{P}$ with full history.}

% \paragraph{NashConv evaluation hardness and approximation via exploitability.} Add a corollary of the exploitability bounds to match the experiments.

% \subsection{Experiments with known models}

% \subsection{Experiments for online RL}

% Compare using full history P vs. not as an additional experiment. 

{\color{black}
\section{Implementation and experiments}
In this section, we highlight some notable implementation details for both \texttt{MF-OMI-FBS} (Algorithm \ref{MF-OMI-FBS-Cons}) and \texttt{MF-OML} (Algorithm \ref{MF-OML}). We then evaluate the performance of \texttt{MF-OMI-FBS} (which we also sometimes refer to as \texttt{OccupationMeasureInclusion} in the numerical experiments below) and compare it with the baseline algorithms in the MFG literature, including Online Mirror Descent (OMD) \cite{perolat21}, Fictitious Play (FP) \cite{perrin2020fictitious}, Prior Descent (PD) \cite{cui2021approximately}, and MF-OMO \cite{guo2022mf}. Our implementations 
%of both the MF-OMI and the baseline algorithms is 
are based on \texttt{MFGLib} \cite{guo2023mfglib}. %, and all the MFGs that we use for the evaluation can be found in MFGLib. 

In all the experiments, we always set $\eta=0$ for \texttt{MF-OMI-FBS} as it turns out to consistently outperform $\eta>0$ choices in practice. For each algorithm, we tune their hyper-parameters via both optuna \cite{akiba2019optuna} (via its support in MFGLib) and manual grid search, and show the best performing choice for each algorithm in the final plots for clarity. It turned out that \texttt{MF-OMI-FBS} works perfectly with only optuna tuning, while all other baselines generally requires further manual gridding.   

%XXX. How we tune the algorithms. %XXX. Why MF-OML cannot be evaluated for now.

%From the numerical performance, it is easy to see that MF-OMI outperforms all existing baseline algorithms in all three examples for solving MFGs. And for online RL problems,  % the regret of MF-OML is shown to have a sublinear growth with respect of the number of episodes, 

% XXX. Always $\eta=0$ is the best for MF-OMI-FBS. 

% XXX. Mention OccupationMeasureInclusion is MF-OMI-FBS in the experiments?

%XXX. Mention the overlapping of OMD and PD and similar observations have been made on RPS example in Kai Cui paper?

%XXX. In Appendix \cite{XXX}, we provide implementation detail of MF-OMI and its empirical evaluation against existing baseline algorithms in the literature for computing NEs of MFGs. XXX. Turned out the numerical performance of MF-OMI outperforms all existing baseline algorithms in most cases and is at least competitive to the best in the worst case.

\subsection{Implementation tips}\label{impl_tips}
In this section, we explain some implementation details of \texttt{MF-OMI-FBS} and \texttt{MF-OML}, which demonstrate how the projection step in each iteration can be made efficient and how the algorithm is extended to more general settings where the transition probabilities are mean field dependent.

\paragraph{Rewriting consistency projection for OSQP interface.} To utilize OSQP \cite{stellato2020osqp}, we rewrite the projection step into the following stand-form:
\begin{equation*}
\begin{array}{ll}
\text{minimize}_d & \dfrac{1}{2}d^\top Pd+q^\top d\\
\text{subject to} & l\leq Ad \leq u,
\end{array}
\end{equation*}
where $P:=2I$, $q=-2\tilde{d}^{k+1}$, $l=\left[\begin{matrix}b\\0\end{matrix}\right]$, $u=\left[\begin{matrix}b\\\infty\end{matrix}\right]$. 

In the implementation, we set the OSQP solution precision to \texttt{1e-8}. This is extremely fast in general and is sufficient to guarantee the fast convergence of \texttt{MF-OMI-FBS}. However, it would be interesting to study the trade-off between per-iteration costs involved in the projection steps and the accuracies as we vary the precision of OSQP.

{\color{black}\paragraph{Warmstart for OSQP.} As iterations proceed and solutions converge, it is natural to warm-start the inner optimization of the quadratic programs (QP) for consistency projections, as the projected iterates of consecutive outer iterations would be closer and 
closer. We hence enable warm-start of the primal and dual variables from previous QP iterations in OSQP. We have found that such a simple implementation level optimization consistently improves the per-iteration runtime, and in the best case (\eg, in the building evacuation example below) achieves up to four times acceleration.}

\paragraph{Mean-field dependent dynamics.} To handle generic mean-field dependent dynamics, we replace $A$ with $A_{\tilde{d}^{k+1}}$. With this simple change, the projection step remains a convex quadratic program and hence can be solved by passing the aforementioned standard form (with $A$ replaced by $A_{\tilde{d}^{k+1}}$) to OSQP. As we will see in the empirical results below, this trick turns out to work extremely well in practice. Figuring out the underlying mechanism and establishing theoretical guarantees for this heuristic implementation for generic mean-field dependent dynamics would be a very interesting future work. %XXX. Remember to mention whether each problem instance below is mean-field independent or not.

% \paragraph{Choice of $n_k$.}  Parallel sampling of for backtesting?
 
\paragraph{Speed-up compared to \texttt{MF-OMO}.} The per-iteration cost of \texttt{MF-OMI-FBS} significantly improves over \texttt{MF-OMO} as the latter requires gradient computations while the former does not. The only slight speed uncertainty comes from the OSQP subproblem solving time, and we leave it as an interesting future problem to study the best scheduling of the OSQP target accuracies, etc. as iterations proceed. 
%Zeroth order and first-order. Gradient-free so much faster (similar to other baseline algorithms like OMD, FP and PD).  %Also global convergence now. %Precision of OSQP and speedup and accuracy trade-off evaluations?

\paragraph{Tips for online RL.} The cubic growth of $n_k=k^3$ in Theorem \ref{thm:mf_oml_regret} turns out to be too aggressive and we found that in general even constant choices of $n_k$ work very well in practice. Particularly, in our online RL experiments, we choose $n_k=20$. The choice of $p_0$ in the default modification is simply chosen as the uniform distribution over states. For simplicity, we use only samples from outer iteration $k$ in \texttt{MF-OML} for estimating $\hat{P}^k$, which turns out to suffice in practice. On the other hand, it would be interesting to compare the effect of data reuse and to explore function approximation to allow for more efficient reusing of samples for both transition and dynamics. We leave this for future work.   %Last but not least, 
%XXX. Choice of $n_k$ (parallel sampling for backtesting) and $p_0$. Use full history for reward requires function approximation. $\hat{P}$ with full history vs. not?

% \paragraph{Choice of $p_0$ and $\hat{P}$ with full history.}

% \paragraph{NashConv evaluation hardness and approximation via exploitability.} Add a corollary of the exploitability bounds to match the experiments.

\subsection{Empirical evaluation of \texttt{MF-OMI-FBS}}
In this section, we show the performance of \texttt{MF-OMI-FBS} against OMD, FP, PD and MF-OMO on three different problems, including both MFGs with monotone rewards and mean-field-independent dynamics, as studied in the paper, as well as more general MFGs that lack monotone rewards and have transition probabilities that depend on the mean field. The implementation of all environments and baseline algorithms can be found in \url{https://github.com/radar-research-lab/MFGLib}. Additionally, problem dimensions can be found on the top title of each plot (and the meanings of each dimension parameter can be found in \texttt{MFGLib}). 

%XXX. Remove the details in the brackets of the top title of each figure to avoid confusion. Actually probably just mention the problem dimensions/sizes can be found in the top title of the plots (and then MFGLib implementation makes these slight differences like T and T-1 stuff very clear).

% \paragraph{Beach bar problem.} The beach bar problem involves agents positioned on a one-dimensional torus representing a beach. A bar is fixed at a specific location, and agents aim to minimize a cost function that balances two objectives: staying close to the bar to reduce discomfort and avoiding overcrowded areas. The details can be found in \cite{XXX}. Here we choose $T=11$ and $S=5$ and $A=3$. XXX. Explain left, right and still. 5 location points along the beach. Beach is indeed a line in our implementation and also that is more intuitive? Finally describe the performance as in A2DR paper.

% \begin{figure}[ht]
% \centering
% \includegraphics[width=0.8\textwidth]{figures/beach_bar}
% \caption{beach bar}
% \end{figure}

\paragraph{Building evacuation.}
This problem involves a multilevel building where a crowd of agents aims to evacuate by descending to the ground floor as quickly as possible while maintaining social distancing. Each floor features two staircases located at opposite corners, requiring agents to traverse the entire floor to reach the next staircase. Agents can move in four cardinal directions (up, down, left, right), stay in place, or transition between floors when positioned on a staircase. Each agent in the crowd aims to descend to the bottom floor as quickly as possible, while minimizing crowding. %The problem is monotone with mean-field independent dynamics. 

%XXX. MFG is monotone. $T=5+1, n_floor=3, floor_l=5, floor_w=5$. Detail can be found in XXX.

% \begin{figure}[h]
% \centering
% \includegraphics[width=0.75\textwidth]{figures/building_evacuation}
% \caption{Building evacuation}
% \label{be_omi}
% \end{figure}

\begin{figure}[h]
\centering
\includegraphics[width=0.45\textwidth]{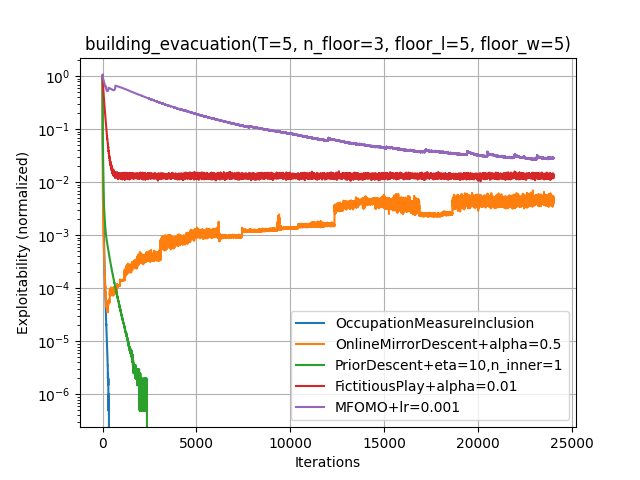}
\includegraphics[width=0.45\textwidth]{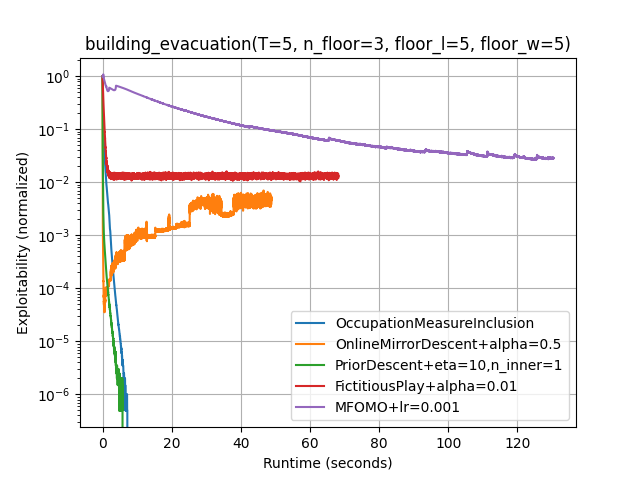}
\caption{Building evacuation. Left: comparison on number of iterations. Right: comparison on runtime.}
\label{be_omi}
\end{figure}

\paragraph{Random linear.} This environment is designed with rewards and transition probabilities defined as random affine functions of the mean-field. To ensure the validity of transition probabilities, a softmax function is applied to the output of the affine functions. %This setup introduces randomness and dependence on the mean-field dynamics. %, and is generally not monotone. %, creating a flexible and interactive environment.

%state, action size 10, time steps T=10+1.

% \begin{figure}[h]
% \centering
% \includegraphics[width=0.75\textwidth]{figures/random_linear}
% \caption{Random linear}
% \label{rl_omi}
% \end{figure}

\begin{figure}[h]
\centering
\includegraphics[width=0.45\textwidth]{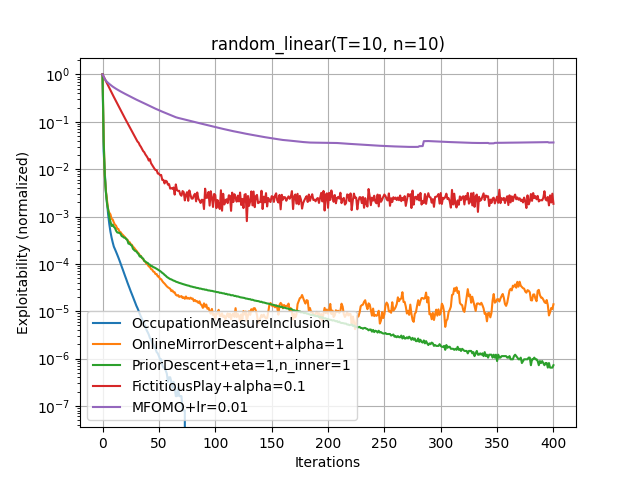}
\includegraphics[width=0.45\textwidth]{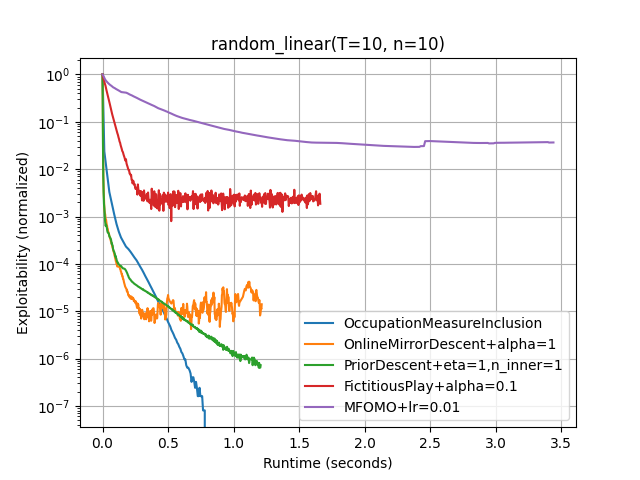}
\caption{Random linear. Left: comparison on number of iterations. Right: comparison on runtime.}
\label{rl_omi}
\end{figure}

% \paragraph{Rock-paper-scissors.} XXX.

% \begin{figure}[ht]
% \centering
% \includegraphics[width=0.8\textwidth]{figures/rock_paper_scissors}
% \caption{rock-paper-scissors}
% \end{figure}

\paragraph{SIS.} This problem studies a simple pandemic model. At each time step, agents choose between social distancing or going out. Susceptible agents who go out risk infection with a probability proportional to the number of infected agents, while those who social distance remain healthy. Infected agents recover with a fixed probability per time step. Agents aim to minimize their costs associated with social distancing and infection. The parameters are chosen the same as in \cite{cui2021approximately}.

% \begin{figure}[h]
% \centering
% \includegraphics[width=0.75\textwidth]{figures/susceptible_infected}
% \caption{SIS}
% \label{sis_omi}
% \end{figure}

\begin{figure}[h]
\centering
\includegraphics[width=0.45\textwidth]{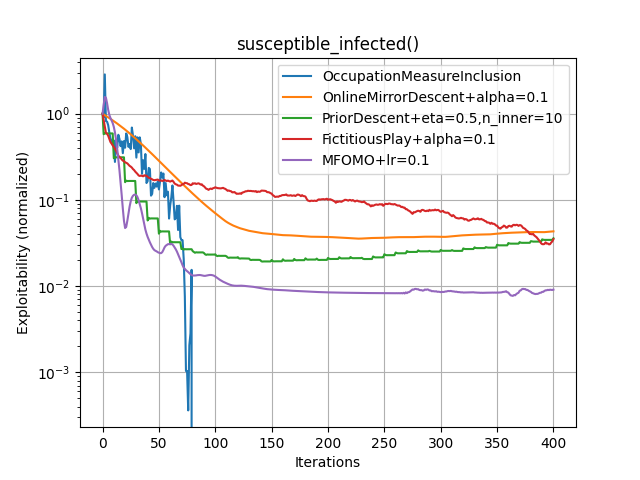}
\includegraphics[width=0.45\textwidth]{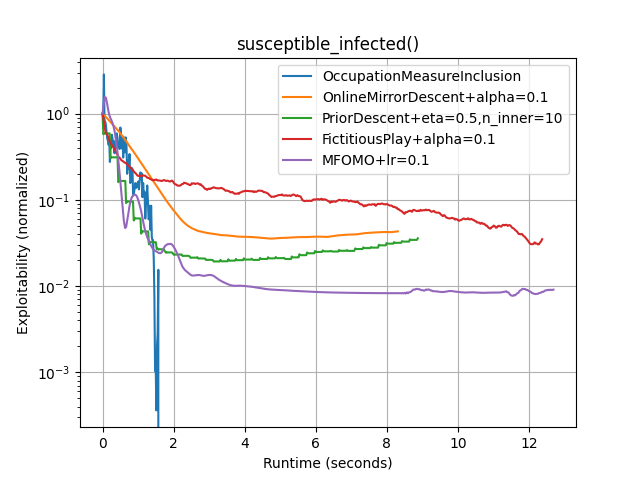}
\caption{SIS. Left: comparison on number of iterations. Right: comparison on runtime.}
\label{sis_omi}
\end{figure}

\paragraph{Observations.} 
Among all the three problems, building evacuation involves a monotone MFG with mean-field independent dynamics that satisfies the assumption of our paper. The MFGs of the other two problems, however, do not satisfy the monotonicity assumption and have mean-field dependent transition probabilities. %From Figures \ref{be_omi}, \ref{rl_omi}, \ref{sis_omi}, we can see that \texttt{MF-OMI-FBS} quickly reaches an exploitability of $10^{-6}$ or less for all problems, consistently outperforming all the baseline algorithms
{\color{black} %, both in terms of iterations and runtime. The only exception is the runtime of the building evacuation problem, where Prior Descent (PD) slightly outperforms \texttt{MF-OMI-FBS}. We remark that this is likely simply due to lack of additional implementation-level optimization. Particularly, we have observed that the use of warmstart in OSQP we menitoned in Section \ref{impl_tips} has already improved the per-iteration runtime of \texttt{MF-OMI-FBS} by about four times, and we believe that with the use of stronger quadratic program solvers and/or other implemnetation tricks, the per-iteration cost of the inner projection step of \texttt{MF-OMI-FBS} could be further reduced.

From Figures \ref{be_omi}, \ref{rl_omi}, and \ref{sis_omi}, we observe that \texttt{MF-OMI-FBS} rapidly achieves an exploitability of $10^{-6}$ or lower across all problems, consistently outperforming all baseline algorithms in both convergence rate and runtime. The only minor exception is the runtime for the building evacuation problem, where Prior Descent (PD) performs slightly faster. However, this appears to be primarily due to the absence of further implementation-level optimizations in our current setup. In particular, as noted in Section \ref{impl_tips}, enabling warm-starting in OSQP has already reduced the per-iteration runtime of \texttt{MF-OMI-FBS} by a factor of four. We believe that with more advanced quadratic programming solvers and additional implementation improvements, the efficiency of the inner projection step can be further enhanced, potentially eliminating the remaining performance gap.}

\subsection{Online RL}
In this section, we evaluate the performance of \texttt{MF-OML} in the online RL setup. Here we consider the SIS environment again, with $T=4$ for simplicity. We set $\alpha=0.02$ and consider $N=3,6,20$ number of players and run \texttt{MF-OML} for 50 outer iterations (so a total of $1000$ episodes given that we take $n_k=20$). To account for the randomness, we run 10 simulations for each instance of $N$ and plot the 95\% confidence intervals. 

Note that since evaluating \texttt{NashConv} for $N$-player games involves calculating the distribution of the empirical distribution of the $N$ players, which has exponential complexity, it is hence challenging to numerically demonstrate the performance of the MF-OML algorithm in the $N$-player RL setting. 
As a surrogate, we show the performance of MF-OML in terms of exploitability for MFGs instead of  \texttt{NashConv} for $N$-player games. The regret of exploitability  captures the performance of the algorithm up to the inherent and algorithm independent mean-field approximation error. More precisely, if we define 
$\texttt{ExplRegret}(M):=\sum_{m=0}^{M-1}\texttt{Expl}(\pi^m)$, then we have 
\begin{equation*}
\begin{split}
\text{bound of \texttt{NashRegret}($M$)}&= \text{\texttt{ExplRegret}(M)} + \left(72C_RSAT\sqrt{\frac{\pi}{2N}}+\frac{96C_RSAT}{N}\right)M \\
&= \text{\texttt{ExplRegret}(M)} + O(M/\sqrt{N}).
\end{split}
\end{equation*}
To the best of our knowledge, accurately evaluating \texttt{NashConv} for $N$-player games is still an open problem in the multi-agent RL literature except for very small problems \cite{muller2019generalized}, and is hence left for future work. %We show the regret in terms of $\texttt{Expl}(\pi^k)$, 

From Figure \ref{sis_online_rl}, we can see that the regret indeed grows sub-linearly (and the mean-field approximation error internal to \texttt{ExplRegret} which comes from Proposition \ref{prop:RP_est_error} that grows linearly is dominated). In addition, we can see that as $N$ grows, the regret gradually decreases. These validate our theoretical claims of the regret bounds of \texttt{MF-OML}.

% XXX. Regret = algorithm error in exploitability + N2MFG error. The latter is inherent and algorithm independent and also exponential complexity in just evaluating NashConv (even decoupled state dynamics and symmetry here, taking expectation over empirical mean-field term is exponential summands). Also SIS example we include is coupled dynamics so not even Markovian.

% 10 simulated trajectories. Anything else worth mentioning from the experiment/implementation there?

% \begin{equation*}
% \begin{split}
% \text{bound of \texttt{NashRegret}($M$)}&= \text{regret bound of \texttt{Expl}} + \left(72C_RSAT\sqrt{\frac{\pi}{2N}}+\frac{96C_RSAT}{N}\right)M \\
% &= \text{regret bound of \texttt{Expl}} + O(M/\sqrt{N}).
% \end{split}
% \end{equation*}

\begin{figure}[h]
\centering
\includegraphics[width=0.75\textwidth]{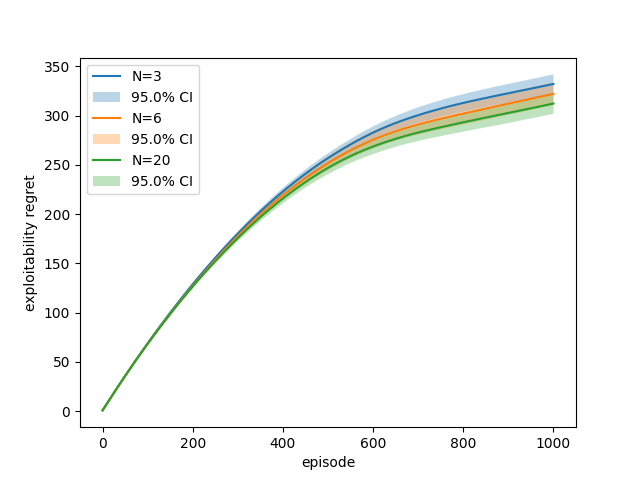}
\caption{SIS exploitability regret}
\label{sis_online_rl}
\end{figure}

% \begin{figure}[ht]
% \centering
% \includegraphics[width=0.8\textwidth]{figures/expl_sis_20241208}
% \caption{SIS exploitability}
% \end{figure}

}

\clearpage
\newpage
\bibliography{mfoml}
\bibliographystyle{plain}
\end{document}